\documentclass[11pt]{amsart}
\setlength{\parskip}{1mm}
\usepackage{kloosterman-revised10}

\title{Hodge theory of Kloosterman connections}

\author[J. Fres\'an]{Javier Fres\'an}
\address[J. Fres\'an]{CMLS, \'Ecole polytechnique, F--91128 Palaiseau cedex, France}
\email{javier.fresan@polytechnique.edu}
\urladdr{http://javier.fresan.perso.math.cnrs.fr}

\author[C.~Sabbah]{Claude Sabbah}
\address[C.~Sabbah]{CMLS, CNRS, École polytechnique, Institut Polytechnique de Paris\\
F--91128 Palaiseau cedex\\
France}
\email{Claude.Sabbah@polytechnique.edu}
\urladdr{http://www.math.polytechnique.fr/perso/sabbah}

\author[J.-D. Yu]{Jeng-Daw Yu}
\address[J.-D. Yu]{Department of Mathematics, National Taiwan University,
	Taipei 10617, Taiwan}
\email{jdyu@ntu.edu.tw}
\urladdr{http://homepage.ntu.edu.tw/~jdyu/}

\thanks{This research was partly supported by the MoST/CNRS grants 106-2911-I-002-539/TWN PRC 1632. The research of C.\,S.\ was also partly supported by the grant ANR-16-CE40-0011 of Agence nationale de la recherche. In the final stage of the preparation of this paper, J.\,F. was also partially supported by the grant 346300 for IMPAN from the Simons Foundation and the matching 2015-2019 Polish MNiSW fund}

\begin{document}
\begin{abstract}
We construct motives over the rational numbers associated with symmetric power moments of Kloosterman sums, and prove that their $L$-functions extend meromorphically to the complex plane and satisfy a functional equation conjectured by Broadhurst and Roberts. Although the motives in question turn out to be ``classical'', we compute their Hodge numbers by means of the irregular Hodge filtration on their realizations as exponential mixed Hodge structures. We show that all Hodge numbers are either zero or one, which implies potential automorphy thanks to recent results of Patrikis and Taylor.
\end{abstract}
\maketitle
\vspace*{-\baselineskip}
{\let\\\relax\tableofcontents}

\mainmatter

\section{Introduction}

This paper is devoted to the study of a family of global $L$-functions built up by assembling symmetric power moments of Kloosterman sums over finite fields. We prove that they arise from potentially automorphic motives over the rational numbers, and hence admit a meromorphic extension to the complex plane that satisfies the expected functional equation. The exact shape of the latter was conjectured by Broadhurst and Roberts.

\subsection{\texorpdfstring{$L$}{L}-functions of symmetric power moments of Kloosterman sums}\label{sec:intro1}

Let $p$ be a prime number, $\FF_p$ the finite field with $p$ elements, and $\overline{\FF}_p$ an algebraic closure of $\FF_p$. If $q$ is a power of $p$, we denote by $\FF_q$ the subfield of $\overline{\FF}_p$ with~$q$ elements
and by $\tr_{\FF_q/\FF_p}\colon \FF_q \to \FF_p$ its trace map.
Let $\psi \colon \FF_p \to \CC^\times$ be a non-trivial additive character. For each $a \in \FF_q^\times$, the \emph{Kloosterman sum} is the real number
\begin{equation}\label{eq:Kl_sum}
\Kl_2(a; q)=\sum_{x \in \FF_q^\times}
\psi(\mathrm{tr}_{\FF_q \slash \FF_p}(x+\sfrac{a}{x})).
\end{equation}
As an application of the Riemann hypothesis for curves over finite fields \cite{Weil}, Weil proved that there exist algebraic integers~$\alpha_a, \beta_a$ of absolute value $\sqrt{q}$ satisfying $\Kl_2(a; q)=-(\alpha_a+\beta_a)$ and $\alpha_a \beta_a=q$. For each integer $k \geq 1$, we define $k$-th symmetric powers of Kloosterman sums
\[
\Kl_2^{\Sym^k}(a; q)=
\sum_{i=0}^k \alpha_a^i \beta_a^{k-i}\vspace*{-3pt}
\]
and, summing over all $a$, we form the \textit{moments}
\[
m_2^k(q)=\sum_{a \in \FF_q^\times} \Kl^{\Sym^k}_2(a; q).
\]
Note that this convention makes $\Kl_2^{\Sym^1}(a; q)$ opposite to $\Kl_2(a; q)$. Contrary to Kloosterman sums and their symmetric powers, the moments $m_2^k(q)$ are rational integers that do not depend on the choice of the additive character. We pack them into the generating series
\begin{equation*}\label{eq:firstcases}
Z_k(p; T)=\exp \biggl(\sum_{n=1}^\infty m_2^k(p^n) \frac{T^n}{n} \biggr),
\end{equation*} which in fact turns out to be a polynomial with integer coefficients. The first few cases are easy to compute: both $Z_1(p; T)$ and $Z_2(p; T)$ are equal to $1-T$, and the equalities
\begin{equation*}
Z_3(p; T)=(1-T)(1-\left(\tfrac{p}{3}\right)p^2 T), \qquad Z_4(p; T)=\begin{cases} 1-T & \text{ if } p=2, \\
(1-T)(1-p^2 T) & \text{ if }p>2, \end{cases}
\end{equation*} hold, where $(p/3)$ stands for the Legendre symbol. From this one may already infer that the polynomial $Z_k(p; T)$ is always divisible by $1-T$. Other so-called ``trivial factors'' appear when~$k$ is a multiple of $4$ or when $k$ is even and $p$ is small compared with $k$, see Section \ref{sec:properties-characteristic-polynomial} \textit{infra}. Better behaved than $Z_k(p; T)$ is the polynomial $M_k(p; T)$ obtained by removing these trivial factors, since all its roots then have the same absolute value $p^{-\psfrac{k+1}{2}}$.

We shall now build a global $L$-function over $\QQ$ with the above polynomials as local Euler factors. We first discuss the case of odd symmetric powers, say of the form $k=2m+1$.
Let~$S$ be the set of odd prime numbers smaller than or equal to $k$. For all $p \notin S$, define the local factor at $p$ as $L_{k}(p; s)=M_k(p; p^{-s})^{-1}$ and consider the Euler product
\begin{equation*}
L_k(s)=\prod_{p \notin S} L_{k}(p; s),
\end{equation*} which by the previous remark about the roots of $M_k(p; T)$ converges absolutely on the half\nobreakdash-plane \hbox{$\mathrm{Re}(s)>\sfrac{(k+3)}{2}$.} This function is expected to have meromorphic continuation to the entire complex plane and satisfy a functional equation relating its values at $s$ and~\hbox{$k+2-s$.} As usual, the functional equation has a neat formulation after completing the~$L$\nobreakdash-function by adding local factors at $p \in S$, as we shall do in \eqref{eq:bad-factors-odd-case} \textit{infra}, and gamma factors at infinity. We set $\cond_k=1_{\mathrm{s}} 3_{\mathrm{s}} 5_{\mathrm{s} } \cdots k_\mathrm{s}$,
where $n_\mathrm{s}$ denotes the square-free part of an integer $n$
(\ie the product of all prime numbers $p$ such that the $p$-adic valuation $v_p(n)$ is odd),
and we consider the complete $L$-function
\begin{equation*}
\Lambda_k(s)=\Bigl(\frac{\cond_k}{\pi^m} \Bigr)^{\sfrac{s}{2}} \prod_{j=1}^m \Gamma\Bigl(\frac{s-j}{2} \Bigr) \prod_{p \text{\ prime}} L_{k}(p; s).
\end{equation*}

\begin{thm}\label{thm:intro-k-odd}
Assume $k$ is odd. The function $L_k(s)$ admits a meromorphic continuation to the complex plane and satisfies the functional equation
\begin{equation*}\label{eq:functionalequationodd}
\Lambda_k(s)=\Lambda_k(k+2-s).
\end{equation*}
\end{thm}

A similar result holds for even symmetric powers, except that we were unable to make the local invariants explicit at $p=2$. To formulate the statement, we write either $k=2m+4$ or $k=2m+2$ with $m$ an even integer, and we define $S$ as the set of all prime numbers smaller than or equal to $k/2$. The local factors at odd primes in $S$ are described in \eqref{eq:local-factors-even} \textit{infra}. Besides, we set $\cond'_k=2_\mathrm{u} 4_\mathrm{u} 6_\mathrm{u} \cdots k_\mathrm{u}$, where $n_\mathrm{u}$ denotes the odd part of the radical of an integer~$n$ (\ie the product of all odd primes dividing~$n$). We then complete the $L$-function outside the prime $2$ as follows:
\[
\Lambda'_k(s)=\Bigl(\frac{\cond_k'}{\pi^m} \Bigr)^{\sfrac{s}{2}} \prod_{j=1}^m \Gamma\Bigl(\frac{s-j}{2} \Bigr) \prod_{p \neq 2} L_{k}(p; s).
\]

\begin{thm}\label{thm:intro-k-even} Assume $k$ is even. The function $L_k(s)$ meromorphically extends to the complex plane. Moreover, there exists a sign $\varepsilon_k\in\{\pm1\}$, an integer $r_k \geq 0$, and a reciprocal of a polynomial with rational coefficients $L_k(2; T)$ such that, setting $\Lambda_k(s)=2^{\sfrac{r_k s}{2}}L_k(2; 2^{-s})\Lambda_k'(s)$, the following functional equation holds:
\begin{equation*}\label{eq:functionalequationeven}
\Lambda_k(s)=\varepsilon_k \Lambda_k(k+2-s).
\end{equation*}
\end{thm}

The above formulas for $\Lambda_k(s)$ match the numerical observations made by Broadhurst and Roberts in
\cite[Eq.\,(128)]{Broadhurst16} and~\cite{Broadhurst17, Roberts17}, up to replacing their variable $s$ with $s-2$ due to a Tate twist. For even $k$, they also conjecture that the elusive invariants take the values \hbox{$r_k=\flr{\sfrac{k}{6}}$} and $\varepsilon_k=(-1)^{t_k}$, with $t_k$ given by the formula
\begin{displaymath}
t_k=\flr{\sfrac{k}{8}}+\sum_{p \equiv 1 \,(\mathrm{mod}\, 4)} \flr{\sfrac{k}{2p}}+\sum_{p \equiv 3 \,(\mathrm{mod}\, 4)} \flr{\sfrac{k}{4p}}+\delta_{8\ZZ}(k),
\end{displaymath}
where $\delta_{8\ZZ}$ is the characteristic function of multiples of $8$. We explain in \ref{subsec:epsilon} \textit{infra} how the last three terms above fit with the local computations at odd primes in $S$ and at infinity.

Theorems \ref{thm:intro-k-odd} and \ref{thm:intro-k-even} were previously only known for $k \leq 8$. The first four cases are straightforward, as the $L$-function is trivial for $k=1, 2, 4$ and agrees, for $k=3$, with the shifted Dirichlet $L$-function $L(\chi_{3}, s-2)$ of the non-trivial quadratic character modulo $3$. In the next four cases, the $L$-function can be expressed in terms of a Hecke cusp form for some congruence subgroup~$\Gamma_0(N)$ of~$\mathrm{SL}_2(\ZZ)$,
as indicated in Table \ref{table:modularity} below.

\begin{table}[h]
\renewcommand{\arraystretch}{1.3}
\begin{center}
\begin{tabular}{|c|c|c|c|c|}
\hline
$k$ & $L_k(s)$ & modular form & references \\
\hline
$5$ & $L(f_3, s-2) $ & $f_3 \in S_3(\Gamma_0(15), (\sfrac{\cdot}{15}))$ & Livn\'e \cite{Livne}, Peters \textit{et al.} \cite{PTV} \\
\hline
$6$ & $L(f_4, s-2) $ & $f_4 \in S_4(\Gamma_0(6))$ & Hulek \textit{et al.} \cite{HSVV} \\
\hline
\multirow{3}{*}{$7$} & \multirow{3}{*}{$L(\mathrm{Ad}(g), s-2) $}
& $g \in S_3(\Gamma_0(125), (\sfrac{\cdot}{21})\chi_5)$,
& \multirow{2}{*}{conjectured by Evans \cite{Evans};} \\
& & $\chi_5$\text{ Dirichlet character } & \multirow{2}{*}{proved by Yun \cite{Yun15}}
\\
& & modulo $5$ with $\chi_5(2)=-i$ &\\
\hline
\multirow{2}{*}{$8$} & \multirow{2}{*}{$L(f_6, s-2)$} & \multirow{2}{*}{$f_6 \in S_6(\Gamma_0(6))$} & conjectured by Evans \cite{Evans2}; \\
& & & proved by Vincent and Yun \cite{Yun15}
\\
\hline
\end{tabular}
\bigskip
\caption{Modularity for $k=5,6,7, 8$}
\label{table:modularity}
\vspace*{-\baselineskip}
\end{center}
\end{table}

\subsection{Cohomological interpretation}

After Deligne \cite{Deligne77SGA},
Kloosterman sums arise as traces of Frobenius acting on an \'etale local system on the torus $\Gmf$. Let $\ell$ be a prime number distinct from~$p$ and let $\ol \QQ_\ell$ be an algebraic closure of the field of $\ell$-adic numbers. Once we view the character~$\psi$ as taking values in $\ol \QQ_\ell$ by choosing a primitive $p$-th root of unity in this field,
there is a rank one $\ell$-adic local system $\AS_\psi$ on the affine line $\Afu_{\FF_p}$ with trace function $z \mapsto \psi(\tr_{\FF_q/ \FF_p}(z))$, the so-called \hbox{Artin-Schreier} sheaf. The \textit{Kloosterman sheaf} $\Kl_2$ is then defined by pulling back and pushing out $\AS_\psi$ through the~diagram
\begin{equation}\label{eq:diagram-Kloosterman}
\begin{array}{c}
\xymatrix@=.5cm{
&\Gmf^{2}\ar[dr]^{f}\ar[dl]_{\pi}&\\
\Gmf &&\Afu_{\FF_p}
}
\end{array}
\end{equation}
where, if $x$ and $z$ are coordinates on $\Gmf^2$, the function $f$ is given by $x+z/x$ and $\pi$ stands for the projection to the $z$-coordinate. That is, we set
\begin{equation*}
\Kl_2=\Rder\pi_!f^\ast \AS_\psi[1].
\end{equation*}

Deligne showed that the object $\Kl_2$ is concentrated in degree zero, and that it is a rank-two lisse sheaf on $\Gmf$ tamely ramified at~zero, wildly ramified at infinity, and pure of weight one. Indeed, the ``forget supports'' map $\Rder\pi_!f^\ast \AS_\psi \to \Rder\pi_\ast f^\ast \AS_\psi$ is an isomorphism. Grothendieck's trace formula and base change
yield the equalities
\[
\Kl_2(a, q)=-\tr(\mathrm{Frob}_a | \Kl_2)=-(\alpha_a+\beta_a),
\]
where $\alpha_a$ and $\beta_a$ are the eigenvalues of Frobenius acting on a geometric fiber of $\Kl_2$ above~$a$. In the same vein, symmetric powers of Kloosterman sums are local traces of Frobenius on the symmetric powers $\Sym^k \Kl_2$. To obtain the moments, we consider the action of geometric Frobenius $F_p$ on the \'etale cohomology with compact support of $\Sym^k \Kl_2$. Since it is concentrated in degree one, invoking the trace formula again we get the equality
\begin{equation}\label{eq:characteristicpolynomial}
Z_k(p; T)=\det \Bigl( 1-F_pT \mid \coH^1_{\et, \cp}(\Gmolf, \Sym^k \Kl_2)\Bigr).
\end{equation}

It follows that $Z_k(p; T)$ is a polynomial with integer coefficients of degree minus the Euler characteristic of the sheaf $\Sym^k \Kl_2$ which, by the Grothendieck-Ogg-Shafarevich formula, is equal to its Swan conductor at infinity. Fu and Wan computed it for odd primes $p$ in \cite{FW05}, completing partial results by Robba \cite{Robba}:
\begin{equation*}
\deg Z_k(p; T)=\mathrm{Sw}_\infty(\Sym^k \Kl_2)=\begin{cases}
\dfrac{k+1}{2}-\flr{\dfrac{k}{2p}+\dfrac{1}{2}} & \text{ if } k \text{ odd,}\\[10pt] \dfrac{k}{2}-\flr{\dfrac{k}{2p}} & \text{ if } k \text{ even.}
\end{cases}
\end{equation*}
The remaining case $p=2$ was treated by Yun,
who proved that the Swan conductor is equal to $\sfrac{(k+1)}{2}$ if $k$ is odd
and to $\flr{\psfrac{k+2}{4}}$ if $k$ is even \cite{Yun15}. Observe that, when $p$ is large compared with $k$, the degree takes the uniform value $\flr{\sfrac{(k+1)}{2}}$. The sets $S$ from Section~\ref{sec:intro1} consist exactly of those prime numbers $p$ at which the degree drops.

From this perspective, the trivial factors of the polynomial $Z_k(p; T)$ are accounted for the invariants and the coinvariants of the inertia action at zero and infinity. By ``removing them'', we mean replacing \'etale cohomology with compact support in the right-hand side of \eqref{eq:characteristicpolynomial} with \textit{middle extension} cohomology, which is defined as
\begin{displaymath}
\coH^1_{\et, \rmid}(\Gmolf, \Sym^k \Kl_2)=\image\bigl[\coH^1_{\et, \rc}(\Gmolf,\Sym^k\Kl_{n+1})\to \coH^1_{\et}(\Gmolf,\Sym^k\Kl_{n+1})\bigr].
\end{displaymath}
The terminology is coherent with the fact that, letting $j \colon \Gm \hookrightarrow \PP^1$ denote the inclusion, the above image agrees with the cohomology on $\PP^1$ of the intermediate (aka middle) extension sheaf~$j_{!\ast} \Sym^k \Kl_2$. By definition, $M_k(p; T)$ is the polynomial
\begin{displaymath}
M_k(p; T)=\det \Bigl( 1-F_pT \mid \coH^1_{\et, \rmid}(\Gmolf, \Sym^k \Kl_2)\Bigr).
\end{displaymath}
Since the \'etale cohomology and the \'etale cohomology with compact support of $\Sym^k \Kl_2$ have weights~\hbox{$\geq k+1$} and $\leq k+1$ respectively by the main theorem of Weil II \cite{WeilII}, the middle extension cohomology is pure of weight $k+1$. What was called $m$ in Section \ref{sec:intro1} \textit{supra} is the degree of $M_k(p; T)$, that is, the dimension of the middle extension cohomology for all $p \notin S$.

\subsection{Exponential Hodge structures and irregular Hodge filtration} Theorems \ref{thm:intro-k-odd} and~\ref{thm:intro-k-even} are proved by constructing a compatible system of potentially automorphic Galois representations
\begin{equation}\label{eqn:introGaloisreps}
r_{k, \ell} \colon \Gal(\ol\QQ / \QQ) \longrightarrow \GL_m(\QQ_\ell),
\end{equation} with $\ell$ running over all prime numbers, such that $r_{k, \ell}$ is unramified at primes $p \neq \ell$ outside $S$ and has traces of Frobenius
\begin{equation}\label{eq:tracesFrobenius}
\mathrm{tr}(r_{k, \ell}(\mathrm{Frob}_p))=\begin{cases}
-m_2^k(p)-1 & \text{ if } 4 \nmid k, \\
-m_2^k(p)-1-p^{\sfrac{k}{2}}& \text{ if } 4 \mid k
\end{cases}
\qquad
(p \notin S \cup \{\ell\}).
\end{equation}
The search for such Galois representations was initiated by Fu and Wan,
who showed in~\cite{FW08} that $L$-functions of symmetric power moments of Kloosterman sums can be realized as Hasse\nobreakdash-Weil zeta functions of \textit{virtual} schemes over $\Spec \ZZ$. An actual Galois representation with traces~\eqref{eq:tracesFrobenius} was first constructed by Yun \cite{Yun15} as a subquotient of the \'etale cohomology of a smooth projective variety over $\QQ$ cut off the affine Grassmannian of $\GL_2$.

Our construction is instead inspired by the theory of exponential motives, as developed by the first author and Jossen \cite{F-J18}. In a nutshell, this is a theory of motives for pairs $(X, f)$ consisting of a smooth variety $X$ over $\QQ$ and a regular function $f$ on~$X$ that enriches the de~Rham cohomology of the vector bundle with connection
$\fqq{f}=(\mathcal{O}_X, \de +\de f)$, that is,
\begin{displaymath}
\coH^n_{\dR}(X, \fqq{f})
=\mathbf{H}^n\Bigl(X, \mathcal{O}_X \To{\de+\de f}
\Omega^1_X \To{\de+\de f} \Omega^2_X \longrightarrow \cdots\Bigr).
\end{displaymath}
We shall also consider the de~Rham cohomology with compact support $\coH^n_{\dR,\rc}(X, \fqq{f})$ and the middle de~Rham cohomology~$\coH^n_{\dR,\rmid}(X, \fqq{f})$, defined as the image of the latter in the former under the ``forget supports'' map.
When $f$ is the zero function, $\coH^n_{\dR}(X, \fqq{f})$ is the usual de~Rham cohomology of~$X$ and one can indeed prove that Nori motives, one of the candidates for the abelian category of mixed motives, form a full subcategory of exponential motives. However, the function does not need to be identically zero for an \emph{a priori} exponential motive to be classical. For instance, this is always the case for an exponential motive of the form~$\coH^n(X \times \Afu, t f)$. If the zero locus $Z=\{f=0\}$ is smooth, it is isomorphic to $\coH^{n-2}(Z)(-1)$, which should be thought of as a cohomological shadow of the identity
\begin{displaymath}
\int_0^\infty \int_{T(\gamma)} e^{-tf} \omega\, \rd t=2\pi\sfi \int_\gamma \mathrm{Res}_Z(\sfrac{\omega}{f}),
\end{displaymath}
where $\omega$ is a differential form on the complement of $Z$ and $T(\gamma)$ is the tube of a chain $\gamma$ in~$Z$. In general, the existence of square roots of the Tate motive $\QQ(-1)$
prevents exponential motives from having realizations in mixed Hodge structures, but they do realize into certain mixed Hodge modules over the affine line that Kontsevich and Soibelman call \textit{exponential mixed Hodge structures}~\cite{K-S10}. It will be enough for our purposes to work in this category, whose main properties are summarized in the appendix.

In analogy with the $\ell$-adic setting, the \textit{Kloosterman connection} $\Kl_2$ on $\Gm$ over a field of characteristic zero is defined by keeping the same diagram \eqref{eq:diagram-Kloosterman} but replacing the \hbox{Artin}\nobreakdash-Schreier sheaf with the differential equation of the exponential. The pullback $f^\ast \AS_\psi$ then becomes $\fqq f=(\cO_{\Gm^2}, \rd+\rd f)$ and one sets
$$
\Kl_2=\pi_+ \fqq f,
$$ which can be thought of as the family of exponential motives $\coH^1(\Gm, x+\sfrac{z}{x})$ parametrized by \hbox{$z\in\Gm$}. Over the complex numbers, $\Kl_2$ is the rank-two vector bundle with connection associated with the modified Bessel differential equation \hbox{$\sfrac{\rd^2 y}{\rd z^2}+(\sfrac{1}{z})\sfrac{\rd y}{\rd z}-y=0$}, which is indeed the one the exponential periods of $\coH^1(\Gm, x+\sfrac{z}{x})$ satisfy. Note that this equation has a regular singularity at zero and an irregular singularity at infinity. We can then form the symmetric powers $\Sym^k \Kl_2$ and consider the various flavors of de~Rham cohomology
\begin{displaymath}
\coH^1_{\dR}(\Gm,\Sym^k\Kl_{2}), \qquad \coH^1_{\dR,\rc}(\Gm,\Sym^k\Kl_2), \qquad \coH^1_{\dR,\rmid}(\Gm,\Sym^k\Kl_2),
\end{displaymath}
where the last space agrees again with the cohomology of the intermediate extension computed on $\PP^1$ (Corollary~\ref{cor:vanishingSymKln}). These vector spaces admit an exponential-Hodge-theoretic interpretation as de~Rham fibers of exponential mixed Hodge structures, respectively denoted~by
\[
\coH^1(\Gm,\Sym^k\Kl_{2}), \qquad \coH^1_{\rc}(\Gm,\Sym^k\Kl_2), \qquad \coH^1_{\rmid}(\Gm,\Sym^k\Kl_2).
\]
As such, they carry an \emph{irregular Hodge filtration}, constructed in \cite[\S6]{Bibi08} by extending an idea of Deligne \cite{Del8406}. On the other hand, we shall prove in Theorem \ref{th:weightsMHM} that these exponential mixed Hodge structures are indeed classical mixed Hodge structures. When this is the case, the irregular Hodge filtration agrees with the usual Hodge filtration, as we show in Proposition~\ref{prop:EMHSMHS}, so we can rely on the geometric interpretation of the former to compute the~latter.

\begin{thm}\label{th:main}
The mixed Hodge structure $\coH^1(\Gm, \Sym^k\Kl_2)$ has weights $\geq k+1$ and the following numerical data:
\begin{enumerate}
\item
For odd $k$, it is mixed of weights $k+1$ and $2k+2$, with
\[
\dim \coH^1(\Gm, \Sym^k\Kl_2)^{p,q}=
\begin{cases}
1 & \text{if } p+q = k+1\text{ and } p \in \{2,4,\dots, k-1\}, \\
1 & \text{if }p = q = k+1, \\
0 & \text{otherwise}.
\end{cases}
\]

\item\label{th:main:itemeven}
For even $k$, it is mixed of weights $k+1$ and $2k+\nobreak2$ if $k\equiv2\bmod4$, and of weights $k+1$, $k+2,$ and $2k+2$ if \hbox{$k\equiv0\bmod4$}, with
\[
\dim \coH^1(\Gm, \Sym^k\Kl_2)^{p,q}=
\begin{cases}
1 & \text{if } p+q = k+1\text{ and } \min\{p,q\}\in \bigl\{2,4,\dots, 2\flr{\sfrac{(k-1)}{4}}\bigr\}, \\
1 & \text{if $p = q =\sfrac{k}{2}+1$ and $k\equiv0\bmod4$}, \\
1 & \text{if }p = q = k+1, \\
0 & \text{otherwise}.
\end{cases}
\]
\end{enumerate}
Furthermore, the mixed Hodge structure $\coH^1_{\rmid}(\Gm,\Sym^k\Kl_2)$ is pure of weight $k+1$ and is equal to $W_{k+1}\coH^1(\Gm, \Sym^k\Kl_2)$.
\end{thm}

To prove that $\coH^1(\Gm, \Sym^k\Kl_2)$ carries a mixed Hodge structure and compute its Hodge numbers, we first establish the analogous result for the pullback $\wt\Kl_2$ of~$\Kl_2$ by the double cover of $\Gm$. The symmetric power $\Sym^k\wt\Kl_2$ turns out to be the restriction of the Fourier transform of a $\cD$-module on $\Afu$ that underlies a pure Hodge module and, as we explain in Section \ref{subsec:computeweights}, the theory of mixed Hodge modules endows its cohomology with a mixed Hodge structure whose numerical invariants can be computed in terms of nearby cycles. On the other hand, seeing the cohomology of symmetric powers as the alternating part of the cohomology of tensor powers and using a refined form of the K\"unneth formula,
we get an isomorphism
\begin{equation}\label{eq:classical-motives}
\coH^1_{\dR}(\Gm,\Sym^k\Kl_{2}) \simeq \coH^1_{\dR}(\Gm,\Kl_{2}^{\otimes k})^{\mathrm{sign}}
\simeq \coH^{k+1}_{\dR}(\Gm^{k+1}, \fqq{f_k})^{\mathrm{sign}},
\end{equation}
where $f_k$ is the function $x_1+\cdots+x_k+z(\sfrac{1}{x_1}+\cdots+\sfrac{1}{x_k})$ and sign denotes the eigenspace on which the symmetric group $\symgp_k$ acts through the sign. After pullback by the cover $t \mapsto z=t^2$ and the change of coordinates $x_i=ty_i$,
the function $f_k$ takes the form $ \widetilde{f}_k=tg^{\boxplus k}$, where $g^{\boxplus k}$ is the~$k$\nobreakdash-fold Thom-Sebastiani sum $g(y_1)+\cdots+g(y_k)$ of the function $g(y)=y+1/y$
with itself. The cohomology group \eqref{eq:classical-motives} is hence also given by the invariants of
\begin{equation}\label{eq:classical-motives2}
\coH^1_{\dR}(\Gm,\Sym^k\wt\Kl_{2}) \simeq \coH^{1}_{\dR}(\Gm^{k+1}, \fqq{t g^{\boxplus k}})^{\mathrm{sign}}
\end{equation} under the action of $\mu_2$ coming from the cover. On toric varieties such as a compactification of~$\Gm^{k}$ adapted to the function $f_k$, the work of Adolphson and Sperber~\cite{A-S97} and the results of~\hbox{\cite{E-S-Y13, Yu12}} lead to a geometric interpretation of the irregular Hodge filtration that, once we know the Hodge numbers of \eqref{eq:classical-motives2}, enables us to complete the proof of Theorem~\ref{th:main}.

From this circle of ideas we also see that the Hodge structure $\coH^1_{\rmid}(\Gm, \Sym^k\Kl_2)$ has motivic origin, in the sense that it is cut out of the cohomology of an algebraic variety. Indeed, replacing with~$\Afu$ the copy of~$\Gm$ with coordinate~$t$ in $\Gm^{k+1}$ and combining the Gysin and the localization long exact sequences, we obtain the following description in Theorem \ref{Thm:Kl_in_Mcl}: let $\KM \subset \Gm^k$ be the zero locus of $g^{\boxplus k}$, on which the group $\symgp_k \times \mu_2$ acts by permuting the coordinates and sending $y_i$ to $-y_i$. Then there is an isomorphism of pure Hodge structures
\begin{equation}\label{eqn:motivicorigin}
\coH^1_{\rmid}(\Gm,\Sym^k\Kl_2)\cong \gr_{k-1}^W \coH^{k-1}_{\cp}(\KM)^{\mathrm{sign} \times \mu_2}(-1).
\end{equation}
For odd $k$, the hypersurface $\KM$ is smooth and we also obtain an isomorphism
\[
\coH^1_{\rmid}(\Gm,\Sym^k\Kl_2)\cong \gr_{k-1}^W \coH^{k-1}(\KM)^{\mathrm{sign} \times \mu_2}(-1).
\]
The right-hand side of \eqref{eqn:motivicorigin} is the Hodge realization of a pure motive $\Motive_k$ over $\QQ$ and the Galois representations $r_{k, \ell}$ from \eqref{eqn:introGaloisreps} arise as its $\ell$-adic realizations.

That a paper seemingly about $L$-functions bears the title
``Hodge theory of Kloosterman connections'' may come as a surprise.
The reason for this choice is that we see Theorem~\ref{th:main} as the crux of our contribution. Once we know that all Hodge numbers are either zero or one (equivalently, that the Galois representations $r_{k, \ell}$ are \textit{regular}), a recent theorem of Patrikis and Taylor \cite{PT15}, building on previous work of Barnet-Lamb, Gee, Geraghty, and Taylor \cite{BGGT}, implies that the $r_{k, \ell}$ are potentially automorphic, and hence that their $L$-functions meromorphically extend to the complex plane and satisfy a functional equation. As expounded in the sequel paper \cite{F-S-Y20b}, our approach also explains the relation, numerically checked to high precision by Broadhurst and Roberts in many examples, between special values of the $L$\nobreakdash-functions at critical integers and certain determinants of \textit{Bessel moments}
\begin{displaymath}
\int_0^\infty I_0(z)^a K_0(z)^{k-a} z^b \sfrac{dz}{z},
\end{displaymath}
where $I_0(z)$ and $K_0(z)$ are the modified Bessel functions of the first and the second kind.

\subsection{Overview} Briefly, the paper is organized as follows. In the preparatory Section \ref{sec:expmot}, we gather the main properties of Kloosterman connections and their symmetric powers. The mixed Hodge structures are constructed in Section \ref{sec:basicKl}, where we also exhibit their avatars over finite fields. Section \ref{sec:comp-Hodge} is devoted to the proof of Theorem~\ref{th:main}. Finally, in Section~\ref{sec:L-functions} we compute the \'etale realizations of the motives and pull everything together to prove Theorems~\ref{thm:intro-k-odd} and \ref{thm:intro-k-even}. The paper is supplemented by an appendix concerning exponential mixed Hodge structures and the irregular Hodge filtration.

\subsection{Acknowledgments} This paper owes much to David Broadhurst and David Roberts, whose wonderful insights guided us all the way through. We thank Takeshi Saito for suggesting the argument to compute the action of complex conjugation in the proof of Corollary~\ref{Cor:Gamma-factor},
and Ga\"etan Chenevier, Olivier Ta\"ibi, and Daqing Wan for encouraging us, with their questions, to improve some of the results of a previous version. The first author also would like to acknowledge useful conversations with Spencer Bloch, Jean\nobreakdash-Beno\^it Bost, Pierre Colmez, Peter Jossen, Emmanuel Kowalski, Yichen Qin, Duco van Straten, and Tomohide Terasoma. Finally, we thank the three anonymous referees for their careful reading of various versions of this paper and their many suggestions to correct inaccuracies and improve the presentation.

\section{Symmetric powers of Kloosterman connections}\label{sec:expmot}

In this section, we gather the properties of Kloosterman connections and their symmetric powers that are relevant for the construction of the mixed Hodge structures in the next sections. We refer the reader to Appendix~\ref{subsec:notationD} for the notation and results from the theories of $\cD$-modules and mixed Hodge modules that are used in what follows, and to \cite[\S II]{Katz87} for the notion of slopes of a meromorphic connection at an irregular singularity.

\subsection{Structure of Kloosterman connections}\label{subsec:basicKln}

Let~$n\geq1$ be an integer. We first define Kloosterman connections $\Kl_{n+1}$ generalizing~$\Kl_2$ from the introduction. For simplicity,
we work over the base field $\CC$, although all results remain valid over a field of characteristic zero. Let $\Gm$ denote the one-dimensional torus. We endow the product $\Gm^{n+1}$ with coordinates $(z, x)=(z, x_1, \ldots, x_n)$ and consider the diagram
\begin{equation*}\label{eq:diagramKloos}
\begin{array}{c}
\xymatrix{
&\Gm^{n+1}\ar[dr]^{f}\ar[dl]_{\pi}&\\
\Gm&&\Afu,
}
\end{array}
\end{equation*} where $\pi$ is the projection $(z, x) \mto z$ to the first factor and $f$ is the function
\begin{equation}\label{eqn:functionf}
f(\tz,x)=x_1+\cdots+x_n+\frac{\tz}{x_1\cdots x_n}.
\end{equation} We define $\Kl_{n+1}$ as the bounded complex of~$\cD_{\Gm}$\nobreakdash-modules
\begin{equation}\label{eq:Klpf}
\Kl_{n+1}=\pi_\bast \fqq{f},
\end{equation} where $\fqq f=(\cO_{\Gm^{n+1}},\rd+\rd f)$ is the connection attached to the exponential of $f$. After identifying $\Gamma(\Gm{}_{,\tz},\cO_{\Gm})$ with $\CC[\tz,\tz^{-1}]$, we can see $\Kl_{n+1}$ as a complex of $\Cltzm$-modules.

Besides, we consider the cyclic Galois cover
\[
\ramifn\colon {\Gm}_{, t} \to {\Gm}_{,\tz}
\]
induced by $\tz \mto \tt^{n+1}$, which has Galois group $\mu_{n+1}$, and we set
\[
\wtKl_{n+1}=\ramifn^\bast\Kl_{n+1} \simeq \wtpi_\bast \fqq{\wtf},
\]
with $\wtpi(\tt,x)=\tt$ and $\wtf(t, x)=f(t^{n+1}, x)$.
The group $\mu_{n+1}$ acts on $\wtKl_{n+1}$, and hence on the pushforward $\ramifn_+\wtKl_{n+1}$, and the original complex can be retrieved as the invariants
\[
\Kl_{n+1}=(\ramifn_+\wtKl_{n+1})^{\mu_{n+1}}.
\]
Let $g \colon \Gm^n\to\Afu$ be the function defined as
\begin{equation}\label{eq:defg}
g(y_1,\dots,y_n)=f(1,y_1,\dots,y_n)=y_1+\cdots+y_n+\frac1{y_1\cdots y_n}.
\end{equation} Since the change of variables $(\tt, x) \mapsto (\tt, y)=(\tt, x/\tt)$ on $\Gm^{n+1}$ turns $\wtf$ into $\tt g(y)$ and is compatible with projections to the first factor, we also get
\[
\wtKl_{n+1}=\wtpi_\bast \fqq{\tt g(y)}.
\]

\begin{prop}\label{prop:Kln}
The complex $\Kl_{n+1}$ is concentrated in degree zero, \ie the equality
\[
\Kl_{n+1}=\SH^0 \pi_\bast \fqq{f}
\]
holds. Moreover, $\Kl_{n+1}$ has the following properties:
\begin{enumerate}
\item\label{prop:Kln1}
$\Kl_{n+1}$ is the irreducible free $\cO_{\Gm}$-module of rank $n+1$ with connection associated with the hypergeometric differential operator \hbox{$(\tz\partial_\tz)^{n+1}-\tz$.}
\item\label{prop:Kln2}
$\Kl_{n+1}$ has a regular singularity at $\tz=0$ with unipotent monodromy and a single Jordan block, and an irregular singularity of pure slope $1/(n+1)$ at $\tz=\infty$.
\item\label{prop:Kln4}
Let $\Kl_{n+1}^\vee$ be the $\cO_{\Gm}$-module dual to $\Kl_{n+1}$ endowed with the dual connection, and let~$\iota_r$ denote the involution $\tz\mto(-1)^r\tz$. Then there is an isomorphism
\[
\Kl_{n+1}^\vee\simeq\iota_{n+1}^\bast\Kl_{n+1}.
\]
\item\label{prop:Kln5}
$\wtKl_{n+1}$ is the restriction to $\Gm$ of the Fourier transform of a regular holonomic $\cD_{\Afu}$\nobreakdash-module.
\end{enumerate}
\end{prop}

\begin{proof} The arguments of \cite[Th.\,7.4\,\&\,7.8]{Deligne77SGA} can readily be transposed to the complex setting to show that $\Kl_{n+1}$ is a free $\cO_{\Gm}$-module with connection sitting in degree zero. Instead, we give a proof based on the following recursive description. Let $\inv$ be the involution~$\tz\mto 1/\tz$ on $\Gm$ and consider the localized Fourier transformation
\[
\cF=j_0^\bast\FT j_{0\bast} \colon \catD^\rb_\hol(\cD_{\Gm})\to\catD^\rb_\hol(\cD_{\Gm}).
\]
Recall that this functor preserves holonomic modules, and sends regular holonomic modules to smooth holonomic modules on $\Gm$ with a regular singularity at the origin and a possibly irregular singularity with slopes in~$\{0, 1\}$ at infinity. For the sake of the induction, it~will be convenient to set $\Kl_1=j_0^\bast \fqq y$ and start with $n=0$.

\begin{lemma}\label{lem:iteration}
For $n\geq0$, the isomorphism $\Kl_{n+2}\simeq\cF\inv^\bast\Kl_{n+1}
$ holds.
\end{lemma}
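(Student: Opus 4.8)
The plan is to unwind both sides of the claimed isomorphism to the same geometric integral transform, exploiting the description $\Kl_{n+1}=\pi_\bast\fqq f$ with $f=x_1+\cdots+x_n+\tz/x_1\cdots x_n$ obtained in \eqref{eq:Klpf}, together with the fact that $\fqq f$ is the external (Thom--Sebastiani) product of the $E^{x_i}$ and of $E^{\tz/x_1\cdots x_n}$. The key observation is that adding one more variable $x_{n+1}$ amounts, after the substitution $x_0=\tz/x_1\cdots x_n$ interchanging the roles of the product and the last coordinate, to replacing $\tz$ by $\tz/x_{n+1}$ and integrating the extra factor $E^{x_{n+1}}$ over $x_{n+1}\in\Gm$; this integration is exactly what a (localized) Fourier transform does once one has performed the inversion $\tz\mapsto1/\tz$. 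So the content of the lemma is a bookkeeping identity between pushforwards along the two projections $\Gm^{n+2}\to\Gm$ (forgetting $x_{n+1}$ versus forgetting, say, $\tz$), mediated by $\inv^\bast$ and by $\cF=j_0^\bast\FT j_{0\bast}$.

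First I would set up the notation carefully: write $\Gm^{n+2}$ with coordinates $(\tz,x_1,\dots,x_{n+1})$, let $p\colon\Gm^{n+2}\to\Gm$ be the projection to $\tz$ and $f_{n+1}(\tz,x)=x_1+\cdots+x_{n+1}+\tz/x_1\cdots x_{n+1}$, so that $\Kl_{n+2}=p_\bast\fqq{f_{n+1}}$. Next I would introduce the auxiliary variable $w=\tz/x_{n+1}$ (equivalently the change of coordinates $(\tz,x_1,\dots,x_{n+1})\mapsto(w,x_1,\dots,x_n,x_{n+1})$ on $\Gm^{n+2}$, which is an isomorphism with inverse $\tz=wx_{n+1}$). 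Under this change $f_{n+1}$ becomes $x_1+\cdots+x_n+w/x_1\cdots x_n+x_{n+1}$, i.e.\ $f_n(w,x_1,\dots,x_n)+x_{n+1}$, and the projection to $\tz$ turns into the composite ``project to $(w,x_{n+1})$, then multiply''. Pushing forward in stages — first along $(w,x_1,\dots,x_{n+1})\mapsto(w,x_{n+1})$, which by the Thom--Sebastiani/Künneth formula for $\cD$-modules gives $\Kl_{n+1}\boxtimes E^{x_{n+1}}$ on $\Gm_w\times\Gm_{x_{n+1}}$ (using $\Kl_{n+1}=\pi_\bast\fqq{f_n}$), and then along the multiplication map $\Gm_w\times\Gm_{x_{n+1}}\to\Gm_\tz$, $(w,x_{n+1})\mapsto\tz=wx_{n+1}$ — I recognise the latter as the integral transform computing, after the substitution $w=1/u$, the localized Fourier transform of $\inv^\bast\Kl_{n+1}$. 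Concretely, pushforward along multiplication is pullback by $\inv$ followed by the Mellin-type convolution with $E^{x_{n+1}}$, and $\cF$ is precisely convolution with $E^{x_{n+1}}$ on $\Gm$ in the sense $j_0^\bast\FT j_{0\bast}$; one matches the two by the standard identity $\FT(j_{0\bast}N)\simeq$ (integration of $N(w)\otimes E^{w\tz}$ along $w$) and the change of variables $\tz\leftrightarrow1/\tz$ that converts the additive Fourier kernel $E^{w\tz}$ into the multiplicative shape $E^{x_{n+1}}$ with $\tz=wx_{n+1}$.

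The main obstacle will be the careful handling of supports and of the difference between the full direct image $\FT$ on $\Afu$ and the ``localized'' version $\cF=j_0^\bast\FT j_{0\bast}$ on $\Gm$: one has to check that extending by $j_{0\bast}$, Fourier transforming, and restricting back to $\Gm$ does not lose or add anything relative to the naive $\Gm$-level convolution, and that all the base-change and projection-formula isomorphisms for $\cD$-modules used in the staged pushforward are compatible (this is where the hypothesis, implicit in the constructions above, that every pushforward is concentrated in a single degree — guaranteed by Proposition~\ref{prop:Kln} and \cite[Prop.\,3.2]{D-S02b} — is needed so that no spectral-sequence subtleties intervene). A second, more routine, point is to make sure the inversion $\inv$ and the choice of sign conventions in $\iota_r$ match up; but since $\cF$ here is the naive localized Fourier transform without a twist, the inversion $\inv^\bast$ on the right-hand side is exactly what is required to turn the multiplication map $\tz=wx_{n+1}$ into the correct Fourier kernel, so no extra sign appears at this stage. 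Once these identifications are in place the isomorphism $\Kl_{n+2}\simeq\cF\inv^\bast\Kl_{n+1}$ follows formally, and the case $n=0$ is consistent with the convention $\Kl_1=j_0^\bast\fqq y$ since then $\cF\inv^\bast\Kl_1=j_0^\bast\FT j_{0\bast}j_0^\bast\fqq{1/y}$ is readily computed to be the Bessel connection $\Kl_2$.
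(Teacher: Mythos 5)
Your proposal is correct and is essentially the paper's own argument read in the opposite direction: the paper computes $\cF\inv^\bast\Kl_{n+1}$ directly as $\pi_{\tauz\bast}E^{x_0+\cdots+x_n+\tauz/x_0\cdots x_n}$ and recognises the defining pushforward of $\Kl_{n+2}$, whereas you start from $\Kl_{n+2}$, split off the last coordinate by the substitution $w=\tz/x_{n+1}$, and push forward in stages — the same change of variables and the same base-change/projection-formula identities. The compatibility between the naive $\Gm$-level transform and $\cF=j_0^\bast\FT j_{0\bast}$ that you single out as the main obstacle is exactly the standard base-change step the paper uses silently, and since the lemma is an isomorphism in the derived category no concentration-in-degree-zero input is actually needed.
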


\begin{proof}
We set $\tz'=1/\tz$ and $x_0 = z/x_1\cdots x_n$,
so that $p_{\tz'}=1/x_0\cdots x_n$ and
\[
\inv^\bast\Kl_{n+1}=p_{\tz'\bast} E^{x_0+\cdots+x_n}.
\]
Hence, denoting by $\tauz$ a coordinate on a new factor $\Gm$ and by $\pi_{\tauz}$ the corresponding projection,
\[
\cF\inv^\bast\Kl_{n+1}=\pi_{\tauz\bast} E^{x_0+\cdots+x_n+\tauz \tz'}=\pi_{\tauz\bast} E^{x_0+\cdots+x_n+\tauz/x_0\cdots x_n}\simeq\Kl_{n+2}.\qedhere
\]
\end{proof}

Let us now show \eqref{prop:Kln1} and \eqref{prop:Kln2}. Since $\Kl_1$ is a holonomic $\cD_{\Gm}$-module, the same goes for each~$\Kl_{n+1}$ by induction. That $\Kl_{n+1}$ is isomorphic to the free $\cO_{\Gm}$-module of rank $n+1$ with connection defined by $L=(\tz\partial_\tz)^{n+1}-\tz$ is clear for $n=0$. Assuming it true for~$\Kl_{n+1}$, Lemma~\ref{lem:iteration} implies that $\Kl_{n+2}$ is isomorphic~to
\begin{align*}
\cF\inv^\bast\bigl(\cD_{\Gm}/\cD_{\Gm}L\bigr) \simeq \cF\bigl(\cD_{\Gm}/\cD_{\Gm}((-\tz\partial_\tz)^{n+1}z-1))\bigr) \simeq \cD_{\Gm}/\cD_{\Gm}((\tz\partial_\tz)^{n+2}-z),
\end{align*} where the first isomorphism is given by inversion and multiplication by $z$ on the right.
With respect to a suitable basis, the matrix of $\tz\partial_\tz$ acting on $\Kl_{n+1}$ is thus given by
\[
\begin{pmatrix}0&0&\cdots&0&\tz\\1&0&\cdots&0&0\\0&1&\cdots&0&0\\ 0 & 0 &\cdots& 1& 0 \end{pmatrix},
\]
which shows that $z=0$ is a regular singularity with unipotent monodromy and a single Jordan block. It also follows from this description that $z=\infty$ is an irregular singularity of slope~$\sfrac{1}{(n+1)}$, and that $\Kl_{n+1}$ is irreducible. Indeed, $j_{0\bast}\inv^\bast \Kl_{n+1}$ agrees with the intermediate extension $j_{0\dag+}\inv^\bast \Kl_{n+1}$ since~$\inv^\bast \Kl_{n+1}$ has slope $\sfrac{1}{(n+1)}$ at $0$; it is thus an irreducible $\cD_{\Afu}$-module, and Fourier transformation preserves this property.

We note that property \eqref{prop:Kln4} is true for $n=0$. Assuming it holds for $\Kl_{n+1}$, we prove it for $\Kl_{n+2}$. Fourier transformation commutes with duality of $\cD_{\Afu}$-modules up to~$\iota_1$, as follows from \hbox{\cite[Lem.\,V.3.6]{Malgrange91}}, and the latter duality corresponds to the duality of free~$\cO_{\Gm}$\nobreakdash-modules with connection through the pair of functors $j_0^\bast$ and $j_{0\bea}$. Thus Lemma \ref{lem:iteration} yields \eqref{prop:Kln4}.

Finally, the pullback $\wtKl_{n+1}$ of $\Kl_{n+1}$ by the finite morphism $[n+1]$ is also concentrated in degree zero and is smooth of rank $n+1$ on $\Gm$. Consider the $\cD_{\Afu}$\nobreakdash-module
\[
M_{n+1}=\SH^0g_\bast\cO_{\Gm^n}.
\]
Letting $j_0 \colon \Gm\hto\Afu$ denote the inclusion, we claim that there is an isomorphism
\begin{equation}\label{eq:KltildeisFourier}
\wtKl_{n+1}\simeq j_0^\bast\FT M_{n+1}
\end{equation}
of $\Clttm$-modules. This will yield~\eqref{prop:Kln5}. We first observe that $\wtKl_{n+1}$ is isomorphic to~$j_0^\bast\FT(g_\bast\cO_{\Gm^n})$. Indeed, letting $p_t \colon {\Gm}_{, t} \times \Afu_\taut \to {\Gm}_{, t}$ and $p_\taut \colon {\Gm}_{, t} \times \Afu_\taut \to \Afu_\taut$ denote the projections and writing $\wtpi$ as the composition of $\id \times g \colon {\Gm}_{, t} \times \Gm^n \to {\Gm}_{, t} \times \Afu$ and $p_t$, the projection formula gives isomorphisms
\[
\wtpi_\bast\fqq{tg}\simeq {p_\tt}_\bast((\id \times g)_\bast\fqq{tg})\simeq {p_\tt}_\bast(p_\taut^\bast g_\bast\cO_{\Gm^{n}}\otimes\fqq{\tt\taut}) \simeq j_0^\bast\FT(g_\bast\cO_{\Gm^n}).
\]
The claim then follows from the fact that $\wtKl_{n+1}$ is concentrated in degree zero.
\end{proof}

\subsection{Cohomology of \texorpdfstring{$\Sym^k\Kl_{n+1}$}{SymkKln1}}\label{subsec:basicSymkKln}

In what follows, we work in the abelian tensor category of vector bundles with connection on $\Gm$. For each integer $k \geq 1$, we define the $k$-th symmetric power $\Sym^k \Kl_{n+1}$ (\resp $\Sym^k \wtKl_{n+1}$)
as the invariants of the tensor product $\Kl_{n+1}^{\otimes k}$ (\resp $\wtKl_{n+1}^{\otimes k}$) under the action of the symmetric group $\symgp_k$. There are isomorphisms
\begin{align*}\label{eq:splitting_fqq_tilde_f}
\wtKl_{n+1}^{\otimes k} &\simeq \ramifn^\bast\Kl_{n+1}^{\otimes k},&\Sym^k \wtKl_{n+1}&\simeq \ramifn^\bast\Sym^k \Kl_{n+1},\notag
\\
\Kl_{n+1}^{\otimes k} &\simeq (\ramifn_\bast\wtKl_{n+1}^{\otimes k})^{\mu_{n+1}},&\Sym^k \Kl_{n+1}&\simeq (\ramifn_\bast\Sym^k \wtKl_{n+1})^{\mu_{n+1}},\\
\ramifn_\bast \wtKl_{n+1}^{\otimes k}&\simeq \bigoplus_{i=0}^n \tz^{i/(n+1)}\Kl_{n+1}^{\otimes k}, &\ramifn_\bast \Sym^k \wtKl_{n+1}&\simeq \bigoplus_{i=0}^n \tz^{i/(n+1)}\Sym^k \Kl_{n+1},\notag
\end{align*}
where $\tz^{i/(n+1)}\mathcal{E}$ denotes the Kummer twist $(\cO_{\Gm}, \de + \frac{i}{n+1}\sfrac{\de\tz}{\tz}) \otimes \mathcal{E}$ of a vector bundle with connection $\mathcal{E}$. This follows from the decomposition
\[
\ramifn_\bast\cO_{\Gm} = \bigoplus_{i=0}^n (\cO_{\Gm}, \de + \tfrac{i}{n+1}\sfrac{\de\tz}{\tz})
\]
into eigenspaces for the action of the Galois group $\mu_{n+1}$ of the cover $\ramifn$. Note the equality
\[
\rk\Sym^k\Kl_{n+1}=\rk\Sym^k\wtKl_{n+1}=\binom{n+k}{k}.
\]

\begin{prop}\label{prop:irreducibility} The connections~$\Sym^k\Kl_{n+1}$ and $\Sym^k\wtKl_{n+1}$ are irreducible.
\end{prop}

\begin{proof} For each $n\geq 1$, the differential Galois group of $\Kl_{n+1}$ and $\wtKl_{n+1}$ is equal to $\mathrm{SL}_{n+1}(\CC)$ if $n$ is even and to $\mathrm{Sp}_{n+1}(\CC)$ if $n$ is odd (see \cite[Cor.\,4.4.8]{Katz87} taking Proposition \ref{prop:Kln}\,\eqref{prop:Kln1} into account). Since any symmetric power of the standard representation of these groups is irreducible (\cf \eg \cite[\S15.3 \& \S17.3]{FultonHarris}), it follows that $\Sym^k\Kl_{n+1}$ and $\Sym^k\wt\Kl_{n+1}$ are irreducible.
\end{proof}

Let us consider the Laurent polynomial
\begin{equation}\label{eq:f_k}
f_k = \sum_{j=1}^k \biggl(\sum_{i=1}^n x_{ji} + \tz\prod_{i=1}^n\frac{1}{x_{ji}} \biggr) \colon \Gm^{kn+1} \to \Afu
\end{equation}
and let $\pi_k\colon \Gm^{kn+1} \to {\Gm}_{,\tz}$ denote the projection to the coordinate $\tz$. With the above notation, the equalities $f_1=f$ and $\pi_1=\pi$ hold. Let us consider the Cartesian square
\[
\xymatrix@C=1.5cm{
\Gm^{kn+1} \ar[r]^{\wtramifn}\ar[d]_{\wt{\pi}_k} & \Gm^{kn+1} \ar[d]^{\pi_k} \\ {\Gm}_{,\tt} \ar[r]^\ramifn & {\Gm}_{,\tz}}
\]
and set $\wtf_k = \wtramifn^*f_k$, so that again $\wtf_1=\wtf$. By the change of variables $(\tt,y_{ji})=(\tt, x_{ji}/\tt)$ as in the previous section, $\wtf_k$ can be written as
\begin{equation}\label{eqn:relationfandg}
\wtf_k=\tt\cdot g^{\boxplus k},
\end{equation} where $g^{\boxplus k}$ is the $k$-fold Thom-Sebastiani sum of the Laurent polynomial $g$ from \eqref{eq:defg} with itself. There is a natural action of $\mu_{n+1}$ on
$\SH^0\wtramifn_\bast E^{\wtf_k}$ such that
\[
E^{f_k}=(\SH^0\wtramifn_\bast E^{\wtf_k})^{\mu_{n+1}}.
\]

\begin{prop}\label{prop:GTS}
There are isomorphisms
\[
\Kl_{n+1}^{\otimes k} \simeq \SH^0{\pi_k}_\bast\fqq{f_k}={\pi_k}_\bast\fqq{f_k}\quad\text{and}
\quad
\wtKl_{n+1}^{\otimes k} \simeq \SH^0\wtpi_{k+}\fqq{\wtf_k}=\wtpi_{k+}\fqq{\wtf_k}.
\]
\end{prop}

\begin{proof}
We first prove the statement about $\wtKl_{n+1}^{\otimes k}$. Recall the equality \hbox{$\wtf_k=\tt\cdot g^{\boxplus k}$} and consider the complex of $\cD_{\Afu_\taut}$-modules with regular holonomic cohomology
\[
M_{n+1}^{(k)}=g^{\boxplus k}_+\cO_{\Gm^{kn}}.
\]
Arguing as in the proof of Proposition \ref{prop:Kln}, we get an isomorphism $\wtpi_{k+}\fqq{\wtf_k}\simeq j_0^\bast\FT M_{n+1}^{(k)}$. Let~$s_k \colon \Afu\times\cdots\times\Afu\to\Afu$ be the sum map. Writing \hbox{$g^{\boxplus k}=s_k\circ(g\times\cdots\times g)$,} we can identify $M_{n+1}^{(k)}$ with the $k$-fold additive $*$-convolution of $M_{n+1}^{(1)}$ with itself. Since Fourier transformation exchanges additive $*$-convolution and derived tensor product on~$\Afu_t$ (as recalled in Section~\ref{subsec:notationD}), we deduce upon localization an isomorphism of complexes of~$\CC[\tt,\tt^{-1}]$\nobreakdash-modules with connection
\[
j_0^+\FT M_{n+1}^{(k)}\simeq (j_0^+\FT M_{n+1}^{(1)})^{{\overset{\mathrm{L}}\otimes} k} ,
\]
where the tensor product is taken over the coordinate ring $\CC[\tt,\tt^{-1}]$ of ${\Gm}_{,\tt}$. Now, recall from the proof of Proposition \ref{prop:Kln} that the complex $j_0^+\FT M_{n+1}^{(1)}$ is concentrated in degree zero and is a free $\CC[\tt,\tt^{-1}]$-module isomorphic to $\wtKl_{n+1}$. It follows that the above derived tensor product is an ordinary tensor product, isomorphic to $\wtKl_{n+1}^{\otimes k}$.

Let us now consider the case of $\Kl_{n+1}^{\otimes k}$. From the equality $\pi_k\circ(\id\times[n+1])=[n+1]\circ\wtpi_k$, along with the first part, we deduce $\mu_{n+1}$-equivariant isomorphisms
\begin{align*}
[n+1]_+\wtKl_{n+1}^{\otimes k}\simeq[n+1]_+\wtpi_{k+}E^{\wt f_k} &\simeq\pi_{k+}((\id\times[n+1])_+E^{\wt f_k})=\cH^0\pi_{k+}((\id\times[n+1])_+E^{\wt f_k}).
\end{align*}
Moreover, the action on the rightmost term comes from that on $(\id\times[n+1])_+E^{\wt f_k}$, whose invariant submodule is $E^{f_k}$. Taking $\mu_{n+1}$-invariants on the extreme terms, we deduce the first isomorphism of the statement.
\end{proof}

\begin{cor}\label{cor:KummethForTensor}
There are isomorphisms
\[
\coH^1_\dR(\Gm,\Kl_{n+1}^{\otimes k})\simeq \coH^{kn+1}_\dR(\Gm^{kn+1},\fqq{f_k}),\quad \coH^1_\dR(\Gm,\wtKl_{n+1}^{\otimes k})\simeq \coH^{kn+1}_\dR(\Gm^{kn+1},\fqq{\wtf_k}).
\]
\end{cor}

\begin{proof} Letting $q$ and $\wt q$ denote the structure morphisms of ${\Gm}_{,\tz}$ and ${\Gm}_{,\tt}$ respectively, we deduce isomorphisms of complexes
\[
q_\bast \Kl_{n+1}^{\otimes k} \simeq (q\circ\pi_k)_\bast \fqq{f_k}, \qquad q_\bast \wtKl_{n+1}^{\otimes k} \simeq (\wt q\circ\wtpi_k)_\bast\fqq{\wtf_k}
\]
from Proposition~\ref{prop:GTS} along with the isomorphism of functors $(q\circ\pi_k)_\bast\simeq q_\bast\circ{\pi_k}_\bast$ and \hbox{$(\wt q\circ\wtpi_k)_\bast\simeq \wt q_\bast\circ\wtpi_{k+}$.} The statement then follows by taking cohomology in degree zero.
\end{proof}

Since the dual of $\fqq{f_k}$ is $\fqq{-f_k}$, Poincaré-Verdier duality yields a commutative diagram
\[
\begin{array}{c}
\xymatrix@C=1.5cm{
\coH_{\dR,\cp}^{kn+1}(\Gm^{kn+1},\fqq{f_k}) \otimes \coH_\dR^{kn+1}(\Gm^{kn+1},\fqq{-f_k})\ar@<-8ex>[d]\ar[r] &\coH_{\dR,\cp}^{2kn+2}(\Gm^{kn+1}) =\CC\ar@{=}[d]\\
\coH_{\dR}^{kn+1}(\Gm^{kn+1},\fqq{f_k}) \otimes \coH_{\dR, \cp}^{kn+1}(\Gm^{kn+1},\fqq{-f_k})\ar@<-8ex>[u]\ar[r]&\coH_{\dR,\cp}^{2kn+2}(\Gm^{kn+1}) =\CC
}
\end{array}
\]
in which the horizontal arrows are non-degenerate pairings. Denote the image of the first vertical arrow by
$\coH_{\dR,\rmid}^{kn+1}(\Gm^{kn+1},\fqq{f_k})$.
Consider the involution $\iota$ on~$\Gm^{kn+1}$ given by
\[
(z,x_{ji}) \mto ((-1)^{n+1}z,-x_{ji}).
\]
Since $\iota_\bast\fqq{f_k} = \iota^+\fqq{f_k} = \fqq{-f_k}$, it induces an isomorphism from the de Rham cohomology and the de Rham cohomology with compact support of~$\fqq{f_k}$ to those of $\fqq{-f_k}$, from which we deduce a self non-degenerate pairing
\begin{equation}\label{eq:pairing_dR_f_k}
\coH_{\dR,\rmid}^{kn+1}(\Gm^{kn+1},\fqq{f_k}) \otimes
\coH_{\dR,\rmid}^{kn+1}(\Gm^{kn+1},\fqq{f_k})\to\CC.
\end{equation} This pairing is $(-1)^{kn+1}$-symmetric since $\iota$ acts trivially on $\coH_{\dR,\cp}^{2kn+2}(\Gm^{kn+1})$. There is a similarly defined pairing on $\coH_{\dR,\rmid}^{kn+1}(\Gm^{kn+1},\fqq{\wtf_k})$, which induces \eqref{eq:pairing_dR_f_k}
by taking $\mu_{n+1}$-invariants.

In what follows, we consider objects acted upon by one of the groups $\symgp_k$ and $\symgp_k\times\mu_{n+1}$, and we use the uniform notation $\chi \colon G \to \{\pm 1\}$ for the sign character if $G=\symgp_k$ and the product of the sign character on $\symgp_k$ and the trivial character on $\mu_{n+1}$ if $G=\symgp_k\times\mu_{n+1}$. We~also set~$\chi_n=\chi^n$ for $n \geq 1$. Given a representation $V$ of $G$ over a field $K$ of characteristic zero, $V^{G,\chi_n}$ denotes the $\chi_n$-isotypic part of $V$, defined as the image of the idempotent
\[
\frac{1}{|G|}\sum_{\sigma\in G} \chi_n(\sigma)\sigma
\]
in the group ring $K[G]$ acting on $V$.

\begin{prop}\label{prop:HdRSymKln}
The de~Rham cohomology of $\Sym^k\Kl_{n+1}$ and $\Sym^k\wtKl_{n+1}$ is concentrated in degree one. Moreover, there are isomorphisms
\begin{align*}
\coH^1_\dR(\Gm,\Sym^k\Kl_{n+1}) &\simeq
\coH^{kn+1}_\dR(\Gm^{kn+1},\fqq{f_k})^{\symgp_k,\,\chi_n} \simeq
\coH^{kn+1}_\dR(\Gm^{kn+1},\fqq{\wtf_k})^{\symgp_k\times\mu_{n+1},\chi_n}, \\
\coH^1_\dR(\Gm,\Sym^k\wtKl_{n+1}) &\simeq
\coH^{kn+1}_\dR(\Gm^{kn+1},\fqq{\wtf_k})^{\symgp_k,\,\chi_n}.
\end{align*}
\end{prop}

\begin{proof} Since $\Gm$ is an affine curve, there is no cohomology in degree greater than one and it suffices to prove that $\coH^0_{\dR}$ vanishes. This cohomology space being given by the invariants under the action of the differential Galois group, the vanishing follows from the fact that~$\Sym^k\Kl_{n+1}$ and $\Sym^k\wtKl_{n+1}$ are non-trivial irreducible representations (Proposition~\ref{prop:irreducibility}).

Notice, however, that the actions of $\sigma \in \symgp_k$
on the left-hand and right-hand sides of each of the isomorphisms
in Proposition~\ref{prop:GTS}
commute up to the sign $\chi_n(\sigma)$,
which can be seen \eg by representing elements as differential forms
(this is similar to the discussion on the determinant of $\coH^1$ in \cite[\S 12.3.1]{F-J18}). By taking the $G$-action
on the isomorphisms of Corollary \ref{cor:KummethForTensor} into account,
the desired isomorphisms are then clear.
\end{proof}

Let $j\colon \Gm\hto\PP^1$ be the inclusion. Besides the meromorphic extension $j_\bast\Sym^k\Kl_{n+1}$, we consider the following $\cD_{\PP^1}$-modules that also extend the $\cD_{\Gm}$\nobreakdash-mod\-ule $\Sym^k\Kl_{n+1}$ to $\PP^1$:
\begin{equation}\label{eqns:extensions}
\begin{aligned}
j_\bexc\Sym^k\Kl_{n+1}&=\bD j_\bast\Sym^k\Kl_{n+1}^\vee\simeq\iota_{n+1}^\bast\bD j_\bast\Sym^k\Kl_{n+1}\text{ (after \ref{prop:Kln}\eqref{prop:Kln4})},\\
j_\bea\Sym^k\Kl_{n+1}&=\image\bigl[j_\bexc\Sym^k\Kl_{n+1}\to j_\bast\Sym^k\Kl_{n+1}\bigr],
\end{aligned}
\end{equation}
where $\bD$ denotes the duality of $\cD$-modules. The equality
\[
\coH^r_\dR(\Gm,\Sym^k\Kl_{n+1})=\coH^r_\dR(\PP^1,j_\bast\Sym^k\Kl_{n+1})
\]
holds, and we set
\begin{align*}
\coH^r_{\dR,\rc}(\Gm,\Sym^k\Kl_{n+1})&=\coH^r_\dR(\PP^1,j_\bexc\Sym^k\Kl_{n+1}),\\
\coH^r_{\dR,\rmid}(\Gm,\Sym^k\Kl_{n+1})&=\image\bigl[\coH^r_{\dR,\rc}(\Gm,\Sym^k\Kl_{n+1})\to \coH^r_\dR(\Gm,\Sym^k\Kl_{n+1})\bigr].
\end{align*}

\begin{cor}\label{cor:vanishingSymKln}
The de Rham cohomology with compact support $\coH^r_{\dR,\rc}(\Gm,\Sym^k\Kl_{n+1})$ and the middle de Rham cohomology $\coH^r_{\dR,\rmid}(\Gm,\Sym^k\Kl_{n+1})$ vanish for $r\neq1$. Moreover, there are isomorphisms
\begin{equation}\label{eqn:twomiddlesagree}
\begin{aligned}
\coH^1_{\dR, \cp}(\Gm,\Sym^k\Kl_{n+1}) &\simeq
\coH^{kn+1}_{\dR, \cp}(\Gm^{kn+1},\fqq{f_k})^{\symgp_k,\,\chi_n} \\&\simeq
\coH^{kn+1}_{\dR, \cp}(\Gm^{kn+1},\fqq{\wtf_k})^{\symgp_k\times\mu_{n+1},\chi_n}, \\
\coH^1_{\dR,\rmid}(\Gm,\Sym^k\Kl_{n+1})
&=\coH^1_\dR(\PP^1,j_\bea\Sym^k\Kl_{n+1}),
\end{aligned}
\end{equation}
and a perfect $(-1)^{kn+1}$-symmetric pairing
\[
\coH^1_{\dR,\rmid}(\Gm,\Sym^k\Kl_{n+1}) \otimes
\coH^1_{\dR,\rmid}(\Gm,\Sym^k\Kl_{n+1}) \to \CC
\]
induced by Poincar\'e duality. The same statement holds for $\Sym^k\wtKl_{n+1}$.
\end{cor}

\begin{proof}
Combining Poincaré duality for $\cD_{\PP^1}$-modules, the isomorphism \eqref{eqns:extensions}, and the fact that~$\iota_{n+1}^\bast$ induces an isomorphism on de Rham cohomology, we get
\[
\coH^r_{\dR, \rc}(\Gm, \Sym^k\Kl_{n+1}) \simeq \coH^r_\dR(\PP^1, \iota_{n+1}^\bast\bD j_\bast\Sym^k\Kl_{n+1})\simeq \coH^{2-r}_{\dR}(\Gm, \Sym^k\Kl_{n+1})^\vee,
\]
and hence the vanishing of the left-hand side for $r\neq1$ by Proposition~\ref{prop:HdRSymKln}. This immediately implies the vanishing of $\coH^r_{\dR,\rmid}(\Gm,\Sym^k\Kl_{n+1})$ for $r\neq 1$. The first two isomorphisms are proved in the same way as those of Proposition~\ref{prop:HdRSymKln}.

To shorten notation, we write $\coH^r=\coH^r_{\dR}(\PP^1, j_\bea\Sym^k\Kl_{n+1})$. From the fact that the intermediate extension $j_\bea\Sym^k\Kl_{n+1}$ is self-dual up to~$\iota_{n+1}$, we get an isomorphism $\coH^r \simeq (\coH^{2-r})^\vee$ similarly as above. Since the morphism $j_\bexc\Sym^k\Kl_{n+1}\to j_\bast\Sym^k\Kl_{n+1}$ factors through $j_\bea\Sym^k\Kl_{n+1}$, to prove~\eqref{eqn:twomiddlesagree} it suffices to show that the map $\coH^1_{\dR,\rc}(\Gm,\Sym^k\Kl_{n+1})\!\to\! \coH^1$ is surjective, which then implies that $\coH^1\!\to\! \coH^1_\dR(\Gm,\Sym^k\Kl_{n+1})$ is injective by duality. The former appears in the long exact sequence associated with
\[
0 \to \ker[j_\bexc\!\Sym^k\Kl_{n+1} \to j_\bea\Sym^k\Kl_{n+1}] \to j_\bexc\!\Sym^k\Kl_{n+1}\to j_\bea\Sym^k\Kl_{n+1}\to 0
\]
and, since the kernel has punctual support by \cite[Prop.\,2.9.8]{Katz90}, we get the surjectivity.

Finally, we observe that the pairing \eqref{eq:pairing_dR_f_k}
is compatible with the induced actions of $\symgp_k$, which moreover acts trivially on the target $\CC$.
Taking the $\chi_n$-isotypic parts yields the desired self-duality
for $\coH^1_{\dR,\rmid}(\Gm,\Sym^k\Kl_{n+1})$.
The proof for $\Sym^k\wtKl_{n+1}$ is completely parallel.
\end{proof}

\subsection{The inverse Fourier transform of \texorpdfstring{$\Sym^k\wtKl_{n+1}$}{wtSymKln+1}}\label{subsec:FSymkwtKln}

Let $j_0\colon \Gm\hto\Afu$ denote the inclusion.
Recall that $\Sym^k\wtKl_{n+1}$ is the restriction to $\Gm$ of the Fourier transform of a regular holonomic $\cD_{\Afu_\tt}$-module. Applying inverse Fourier transformation to the exact sequence
\begin{equation}\label{eq:intermextwt}
0\to j_{0\bea}\Sym^k\wtKl_{n+1}\to j_{0\bast}\Sym^k\wtKl_{n+1}\to \wt C_{k, n}\to0,
\end{equation}
where $\wt C_{k, n}$ is supported at the origin, we thus get an exact sequence of regular holonomic~$\cD$\nobreakdash-modules on the dual affine line $\Afu_\tau$.
Set
\begin{equation}\label{eqn:defMtilde}
\wtM=\FT^{-1}( j_{0\bea}\Sym^k\wtKl_{n+1}).
\end{equation}
The endofunctor $\Pi\colon N \mapsto N \star j_{0\bexc}\cO_{\Gm}$ on the category of regular holonomic $\cD_{\Afu}$\nobreakdash-modules, where~$\star$ stands for additive $\ast$-convolution (\cf Section \ref{subsec:notationD}), is a projector onto the full subcategory of those with vanishing global de Rham cohomology, and the functors~$\Pi \circ \FT^{-1}$ and~$\FT^{-1}\circ j_{0\bast} j_0^\bast$ are canonically isomorphic (\cite[Prop.\,12.3.5]{Katz90}). Therefore, the equality
\begin{equation}\label{eqn:PiofMtilde}
\Pi(\wtM)=\FT^{-1}(j_{0\bast}\Sym^k\wtKl_{n+1})
\end{equation} holds, and we get an exact sequence of regular holonomic $\cD_{\Afu}$-modules
\begin{equation}\label{eq:wtM}
0\to\wtM\to\Pi(\wtM)\to\wtM'\to0,
\end{equation} where $\wtM'$ is constant (\ie a sum of copies of the trivial $\cD_{\Afu}$-module $\cO_{\Afu}$).

As recalled in Section \ref{subsec:FT_EMHS} of the appendix, the projector $\Pi$ lifts to a projector on the category $\MHM(\Afu)$ of mixed Hodge modules on the affine line $\Afu$, denoted in the same way.

\begin{prop}\label{prop:HwtM}
The exact sequence \eqref{eq:wtM} underlies an exact sequence
\[
0\to\wtM{}^\rH\to\Pi(\wtM^\rH)\to\wtM'{}^\rH\to0
\]
in the category $\MHM(\Afu)$. More precisely, $\wtM{}^\rH$ is the pure Hodge module $W_{kn}\Pi(\wtM^\rH)$ of weight~$kn$ and~$\wtM'{}^\rH$ is a mixed Hodge module of weights $\geq kn+1$, which is pure of weight $2k+1$ if $n=1$.
\end{prop}

\begin{proof} Set $U=\Gm^{kn}$ and recall the Laurent polynomial $g^{\boxplus k}\colon U \to \Afu_\tau$. We consider the associated Gauss\nobreakdash-Manin system $\SH^0g^{\boxplus k}_\bast\cO_U$ and its localized Fourier transform
$j_0^\bast\FT(\SH^0g^{\boxplus k}_\bast\cO_U)$. The second isomorphism of Proposition~\ref{prop:GTS} reads
\[
\wtKl_{n+1}^{\otimes k} \simeq j_0^\bast\FT(\SH^0g^{\boxplus k}_\bast\cO_U).
\]
The Laurent polynomial $g$ is convenient and non\nobreakdash-degenerate with respect to its Newton polytope, and hence so is $g^{\boxplus k}$. The argument of \cite[(3.6) \& (3.9)(c)]{D-L91} extends to the complex setting and shows that the cone of the natural morphism~$g^{\boxplus k}_\bexc\cO_U\to g^{\boxplus k}_\bast\cO_U$ has constant cohomology,\footnote{Indeed, since $h=g^{\boxplus k}$ is convenient and non-degenerate with respect to its Newton polytope, it can be extended to a proper morphism $h_\Sigma\colon Y_\Sigma\to\Afu$ on a smooth toroidal variety $Y_\Sigma$ such that $R^i(h_{\Sigma|\ov Y_\Sigma^\sigma})_*\CC$ is constant for each $i$ and each cone $\sigma$ of the regular fan $\Sigma$. Setting $D=Y_\Sigma\moins\Gm^{kn}$, it follows that $R^i(h_{\Sigma|D})_*\CC$ is constant for each $i$, and thus the assertion for $h$ reduces to that for $h_\Sigma$, which holds because $Y_\Sigma$ is smooth and $h_\Sigma$ is proper and locally acyclic except at a finite number of points.}
and hence the same holds for the kernel and the cokernel
of the induced morphism
\[
\cH^0g^{\boxplus k}_\bexc\cO_U\to \cH^0g^{\boxplus k}_\bast\cO_U.
\]
Letting $\cH^0g^{\boxplus k}_{\bea}\cO_U$ denote its image, it follows that the induced morphisms
\[
\bigl(\cH^0g^{\boxplus k}_\bexc\cO_U\bigr)[\partial_\taut^{-1}]\to\bigl(\cH^0g^{\boxplus k}_\bea\cO_U\bigr)[\partial_\taut^{-1}]\to(\cH^0g^{\boxplus k}_\bast\cO_U)[\partial_\taut^{-1}],
\]
after inverting $\partial_\taut$, are isomorphisms. We finally obtain a morphism
\[
\cH^0g^{\boxplus k}_\bea\cO_U\to\FT^{-1}(j_{0\bast}\wtKl_{n+1}^{\otimes k})
\]
whose kernel and cokernel are constant $\cD_{\Afu}$-modules.

Recall from Section \ref{subsec:notaMHM} that the mixed Hodge module~$\pQQ^\rH_U$ is pure of weight $kn=\dim U$ and that the associated perverse sheaf and filtered $\cD_U$-module are $\QQ_U[kn]$ and $\cO_U$ together with the trivial filtration jumping at the index~$0$. As mixed Hodge modules on $\Afu_\taut$, the proper pushforward $\cH^0\Hm g^{\boxplus k}_!\pQQ^\rH_U$ has weights~\hbox{$\leq kn$},
the pushforward $\cH^0\Hm g^{\boxplus k}_*\pQQ^\rH_U$
has weights~\hbox{$\geq kn$,} and the image~$\cH^0\Hm g^{\boxplus k}_{!*}\pQQ^\rH_U$ of the former in the latter is pure of weight~$kn$, by \hbox{\cite[(4.5.2)]{MSaito87}.} Away from its singularities, $\cH^0\Hm g^{\boxplus k}_{!*}\pQQ^\rH_U$ corresponds to a polarizable variation of pure Hodge structures of weight $kn-1$.

The symmetric group $\symgp_k$ acts on $\Gm^{kn}$ by permuting the index $j$ in the coordinates $y_{ji}$ and this action preserves $g^{\boxplus k}$. Therefore, $\symgp_k$ acts on $\cH^0g^{\boxplus k}_\bast\cO_U$ and hence on its Fourier transform~$\FT \cH^0g^{\boxplus k}_\bast\cO_U$ and its localized Fourier transform $j_0^\bast\FT \cH^0g^{\boxplus k}_\bast\cO_U$. Through the identification
\[
\Sym^k \wtKl_{n+1}\simeq j_0^\bast\FT(\cH^0g^{\boxplus k}_\bast\cO_U)^{\symgp_k,\,\chi_n},
\]
we obtain a morphism
\begin{equation}\label{eq:kappa!{*}}
(\cH^0g^{\boxplus k}_\bea\cO_U)^{\symgp_k,\,\chi_n}\to\FT^{-1}(j_{0\bast}\Sym^k\wtKl_{n+1})=\Pi(\wtM)
\end{equation}
inducing an isomorphism after inverting $\partial_\taut$, a property which implies the isomorphisms
\begin{equation}\label{eq:Pigbox}
\Pi(\wtM)\simeq\Pi((\cH^0g^{\boxplus k}_\bea\cO_U)^{\symgp_k,\,\chi_n})\simeq\Pi(\cH^0g^{\boxplus k}_\bea\cO_U)^{\symgp_k,\,\chi_n}.
\end{equation}

\begin{lemma}\label{lem:irreducibility}
The $\cD_{\Afu}$-module $\wt M$ is irreducible and is equal to the image of \eqref{eq:kappa!{*}}.
\end{lemma}

\begin{proof} Since $\Sym^k\wtKl_{n+1}$ is irreducible by Proposition \ref{prop:irreducibility}, so are $j_{0\bea}\Sym^k\wtKl_{n+1}$ and its inverse Fourier transform $\wt M$. Besides, the left-hand side $\wt M_0$ of \eqref{eq:kappa!{*}} underlies a pure Hodge module $\wt M_0^\rH$ on the affine line, which decomposes as a direct sum $\wt M_1^\rH\oplus\wt M_2^\rH$ in which the~$\cD$\nobreakdash-module $\wt M_1$ underlying $\wt M_1^\rH$ is the maximal constant submodule of $\wt M_0$. The image of~\eqref{eq:kappa!{*}} is thus isomorphic to the $\cD$-module $\wt M_2$ underlying $\wt M_2^\rH$. Since $\wt M_2^\rH$ is pure, $\wt M_2$ is a direct sum of irreducible $\cD$-modules, and hence $\FT(\wt M_2)$ as well. It follows that $\FT(\wt M_2)$ is an intermediate extension at the origin, in the sense that the equality $\FT(\wt M_2)=j_{0\dag+}j_0^+\FT(\wt M_2)$ holds. On the other hand, \eqref{eq:Pigbox} gives the equality $j_0^+\FT(\wt M_2)=\Sym^k\wtKl_{n+1}$, from which we get $\FT(\wt M_2)=j_{0\dag+}\Sym^k\wtKl_{n+1}=\FT(\wt M)$ as we wanted to show.
\end{proof}

The $\chi_n$-isotypic component $(\cH^0\Hm g^{\boxplus k}_{!*}\,\pQQ^\rH_U)^{\symgp_k,\,\chi_n}$ is a pure object of weight $kn$ of $\MHM(\Afu)$. Upon application of the projector $\Pi$, we obtain an object $\Pi(\cH^0\Hm g^{\boxplus k}_{!*}\, \pQQ^\rH_U)^{\symgp_k,\,\chi_n}$ of $\MHM(\Afu)$ whose underlying $\Cltaut$-module is $\Pi(\wt M)$. The image of the lift of \eqref{eq:kappa!{*}} to $\MHM(\Afu)$ is a pure Hodge module of weight $kn$ that lifts~$\wt M$ to $\MHM(\Afu)$ and is denoted by $\wt M^\rH$. Therefore, $\Pi(\cH^0\Hm g^{\boxplus k}_{!*}\,\pQQ^\rH_U)^{\symgp_k,\,\chi_n}=\Pi(\wt M^\rH)$ holds. We denote by $\wtM'{}^\rH$ the quotient object in $\MHM(\Afu)$.

It remains to check the weight properties. We first claim that the equality $\wt M^\rH=W_{kn}\Pi(\wt M^\rH)$ holds. Otherwise, the non-zero quotient object $W_{kn}\Pi(\wt M^\rH)/\wt M^\rH$, which is constant, would be a direct summand of $W_{kn}\Pi(\wt M^\rH)$, and hence $\Pi(\wt M)$ would have a non-zero constant submodule, which contradicts the vanishing of its global de Rham cohomology. It follows that $\wtM'{}^\rH$ has weights $\geq kn+1$ and it remains to show that it is pure of weight $2k+1$ if $n=1$. This is equivalent to showing that the nearby cycles at infinity $\psi_{1/\taut}\wtM'{}^\rH$ have weight $2k$ (note that~$\wtM'{}^\rH$ extends smoothly at infinity). By \cite[Prop.\,A.1]{M-T09}, although $\Pi(\wtM^\rH)$ is mixed, the weight filtration~$W$ on~$\psi_{1/\taut,1}\Pi(\wtM^\rH)$ is nevertheless identified with a shifted monodromy filtration, namely the monodromy filtration centered at~$k$. Moreover, $\psi_{1/\taut}\wtM'{}^\rH$ is identified with the primitive part~$\rP_{k}\gr_{2k}^W\psi_{1/\taut,1}\Pi(\wtM^\rH)$ (\cf \cite[proof of Prop.\,A.3]{M-T09}), and hence is pure~of~weight~$2k$, since the monodromy of $\psi_{1/\taut,1}\Pi(\wtM^\rH)$ has only one Jordan block, by Corollary~\ref{cor:wtM} below.
\end{proof}

\section{Motives of symmetric power moments of Kloosterman connections}\label{sec:basicKl}

The main goal of this section is to prove that the middle de Rham cohomology of the connection $\Sym^k\Kl_{n+1}$ is the de Rham realization of a Nori motive over $\QQ$ (see \cite{HMS17}). The expressions of the various cohomology spaces obtained in Section \ref{subsec:basicSymkKln} naturally exhibit them as the de Rham fibers of certain exponential mixed Hodge structures, and we begin by showing in Theorem~\ref{th:weightsMHM} that they are in fact usual mixed Hodge structures. Building on this result, we then obtain in Theorem~\ref{Thm:Kl_in_Mcl} a geometric description of the middle mixed Hodge structure in terms of a hypersurface \hbox{$\KM\subset\Gm^{kn}$} acted upon by the group $\symgp_k\times\mu_{n+1}$. This leads us to \emph{define} the middle motive $\Motive_k$ of $\Sym^k\Kl_{n+1}$ as
\begin{equation}\label{eq:Mk}
\Motive_k= \gr^W_{kn+1}[\NM_\cp^{kn-1}(\KM)(-1)]^{\symgp_k\times\mu_{n+1},\chi_n},
\end{equation}
where in the right-hand side $W_\bbullet$ stands for the weight filtration, which exists at the motivic level by \cite[Th.\,10.2.5]{HMS17}.
In the second part, we establish the counterpart of these results for the étale cohomology of symmetric powers of the $\ell$-adic Kloosterman sheaf and the rigid cohomology of symmetric powers of the Kloosterman $F$-isocrystal on $\Gm$ over a finite field. They turn out to be easier to prove because a full formalism of weights is at hand regardless of the ramification of the coefficients, whereas in the Hodge setting
we need to resort the interpretation of~$\Sym^k\Kl_{n+1}$ as the localized Fourier transform of a mixed Hodge module in order to remain within the framework of exponential mixed Hodge structures.

\subsection{The mixed Hodge structures attached to \texorpdfstring{$\Sym^k\Kl_{n+1}$}{SymkKln1}}\label{subsec:expmotSymkKln}

Throughout this section,
we work in the abelian category $\EMHS$
of exponential mixed Hodge structures,
which is a full subcategory of the category of mixed Hodge modules
over the affine line $\Afu$ (see Section~\ref{subsec:FT_EMHS} for a review). It contains the category of mixed Hodge structure as a full subcategory (Lemma~\ref{lem:weightsMHS}) and objects $\coH^r(U,f),$ $\coH_\cp^r(U,f),$ and $\coH_\rmid^r(U,f)$ associated with a regular function $f\colon U\to\Afu$ and an integer $r$,
whose de Rham fibers are the cohomology spaces $\coH_\dR^r(U,\fqq{f}),$ $\coH_{\dR,\cp}^r(U,\fqq{f}),$ and $\coH_{\dR,\rmid}^r(U,\fqq{f})$ respectively (see Section \ref{subsec:EMHS_Uf}).

Recall from~\eqref{eq:f_k} the Laurent polynomial $f_k$ on the torus $\Gm^{kn+1}$ with coordinates $(z, x_{ji})$ and its pullback $\wtf_k$ to the torus~$\Gm^{kn+1}$ with coordinates $(t, x_{ji})$ by the map $\tz \mapsto \tt^{n+1}$. The group $\symgp_k \times \mu_{n+1}$ acts on~$\Gm^{kn+1}= {\Gm}_{,\tt} \times (\Gm^n)^k$ by permutation of the copies of $\Gm^n$ and multiplication on the coordinate $\tt$. As this action leaves $\wtf_k$ invariant, the objects $\coH^{kn+1}(\Gm^{kn+1}, \wtf_k)$ and $\coH^{kn+1}_{\cp}(\Gm^{kn+1}, \wtf_k)$ of the category $\EMHS$ inherit an action of $\symgp_k \times \mu_{n+1}$.

For $? = \emptyset, \cp, \rmid$, we define the objects
\begin{equation*}\label{def:motSymkKln}
\begin{aligned}
\coH_?^1(\Gm, \Sym^k\Kl_{n+1})&=\coH_?^{kn+1}(\Gm^{kn+1}, f_k)^{\symgp_k,\,\chi_n}=\coH_?^{kn+1}(\Gm^{kn+1}, \wtf_k)^{\symgp_k \times \mu_{n+1},\,\chi_n}, \\ \coH_?^1(\Gm, \Sym^k\wtKl_{n+1})&=\coH_?^{kn+1}(\Gm^{kn+1}, \wtf_k)^{\symgp_k,\,\chi_n}
\end{aligned}
\end{equation*} of the category $\EMHS$, the de Rham fibers of which are nothing but $\coH_{\dR, ?}^1(\Gm, \Sym^k\Kl_{n+1})$ and~$\coH_{\dR, ?}^1(\Gm, \Sym^k\wt\Kl_{n+1})$ in view of Proposition~\ref{prop:HdRSymKln} and Corollary~\ref{cor:vanishingSymKln}.

\begin{thm}\label{th:weightsMHM} The exponential mixed Hodge structures
\[
\coH^1(\Gm,\Sym^k\Kl_{n+1}), \quad \coH^1_{\rc}(\Gm,\Sym^k\Kl_{n+1}), \quad \text{and} \quad \coH^1_{\rmid}(\Gm,\Sym^k\Kl_{n+1})
\]
are usual mixed Hodge structures of weights $\geq kn+1$, $\leq kn+1$, and $kn+1$ respectively, and the natural morphisms between them are morphisms of mixed Hodge structures.

Moreover, the induced map
\begin{equation}\label{eqn:isom-graded}
\gr^W_{kn+1}\coH^1_{\rc}(\Gm,\Sym^k\Kl_{n+1}) \longrightarrow \gr^W_{kn+1}\coH^1(\Gm,\Sym^k\Kl_{n+1})
\end{equation} is an isomorphism, and $\coH^1_{\rmid}(\Gm,\Sym^k\Kl_{n+1})$ is equal to its image. This pure Hodge structure of weight $kn+1$ is equipped with a $(-1)^{kn+1}$-symmetric pairing
\begin{equation}\label{eq:pairing-motive}
\coH^1_{\rmid}(\Gm,\Sym^k\Kl_{n+1}) \otimes \coH^1_{\rmid}(\Gm,\Sym^k\Kl_{n+1}) \to \QQ(-kn-1).
\end{equation}
The same results hold for $\Sym^k\wtKl_{n+1}$.
\end{thm}

\begin{proof}
Once we know that $\coH^1_{\rc}(\Gm,\Sym^k\Kl_{n+1})$ and $\coH^1(\Gm,\Sym^k\Kl_{n+1})$ are both mixed Hodge structures, the natural morphism between them is a morphism of Hodge structures since $\MHS$ is a full subcategory of $\EMHS$ by Lemma \ref{lem:weightsMHS}. Its image $\coH^1_{\rmid}(\Gm,\Sym^k\Kl_{n+1})$ is therefore a usual mixed Hodge structure as well. The pairing \eqref{eq:pairing-motive} is induced by Poincaré duality and the involution $\iota_{n+1}$ as in Corollary \ref{cor:vanishingSymKln} for the de Rham fiber.

Since the exponential mixed Hodge structures attached to $\Sym^k\Kl_{n+1}$ and $\Sym^k\wtKl_{n+1}$ are defined as the $\chi_n$-isotypic components of $\coH_?^{kn+1}(\Gm^{kn+1}, \wtf_k)$ for the action of $\symgp_k \times \mu_{n+1}$ and~$\symgp_k$ respectively, it suffices to prove the result for the latter. For $\coH^{kn+1}(\Gm^{kn+1}, \wtf_k)$ and~$\coH_{\cp}^{kn+1}(\Gm^{kn+1}, \wtf_k)$, we apply Theorems~\ref{th:EMHSMHStg}\eqref{th:EMHSlocMHS2} and~\ref{th:EMHSMHStg}\eqref{th:EMHSlocMHS3} respectively with $V=\Gm^{kn}$, $M_V^\rH=\cO_V^\rH$, $f=\wtf_k$, and~$r=kn+1$. The weight properties follow from Proposition \ref{prop:weightsHdUf}, taking Lemma \ref{lem:weightsMHS} into account.

We will prove that the analogue of the map \eqref{eqn:isom-graded} for $\Sym^k\wtKl_{n+1}$ is an isomorphism and deduce the result for $\Sym^k\Kl_{n+1}$ by taking $\mu_{n+1}$-invariants. According to Proposition \ref{prop:weightsHdUf}, it suffices to establish the equality
\begin{equation}\label{eqn:purityequality}
\coH^1_\rmid(\Gm,\Sym^k\wtKl_{n+1})=W_{kn+1}\coH^1(\Gm,\Sym^k\wtKl_{n+1}).
\end{equation}
The inclusion $\subset$ follows from the purity of $\coH^1_\rmid(\Gm,\Sym^k\wtKl_{n+1})$. Since we only deal with weight properties, we may as well work with the de~Rham fibers of the corresponding exponential mixed Hodge structures. Recall that the $\cD_{\Afu}$\nobreakdash-module $\wt M$ defined in \eqref{eqn:defMtilde} underlies a pure Hodge module $\wt M^\rH$ of weight $kn$ by Proposition \ref{prop:HwtM}. Letting $j_\infty\colon\Afu_t\hto\PP^1_t$ denote the inclusion, the map $\coH^1_{\dR,\rmid}(\Gm,\Sym^k\wtKl_{n+1})\hto\coH^1_\dR(\Gm,\Sym^k\wtKl_{n+1})$ decomposes as
\[
\coH^1_\dR(\PP^1_t,j_{\infty\dag+}\FT\wt M)\hto\coH^1_\dR(\Afu_t,\FT\wt M)\hto\coH^1_\dR(\Afu_t,\FT\Pi(\wt M)).
\]
From Proposition \ref{prop:HwtM} and Corollary \ref{cor:weightssimple}\eqref{cor:weightssimple1}, we deduce that the second inclusion induces an equality between the subspaces $W_{kn+1}$. Corollary \ref{cor:weightssimple}\eqref{cor:weightssimple3} identifies the weight filtration of $\coH^1_\dR(\Afu_t,\FT\wt M)$ with that of
\[
\coker[\wt\rN_\tau\colon\psi_{\tau,1}\wt M^\rH\to \psi_{\tau,1}\wt M^\rH(-1)].
\]
Recall also that the weight filtration on $\psi_{\tau,1}\wt M^\rH$ is the monodromy filtration associated with~$\wt\rN_\tau$ centered at $kn-1$ ($kn$ is the pure weight of $\wt M^\rH$). Then $\gr^W\coker\wt\rN_\tau$ is the direct sum of the primitive parts of the Lefschetz decomposition of $\gr^W\psi_{\tau,1}\wt M^\rH(-1)$. The weight of the component corresponding to the primitive part $\rP_0$ (Jordan blocks of $\wt\rN_\tau$ of size one) is $kn+1$, and the weight for the component corresponding to $\rP_i$ ($i\geq1$) is $kn+1+i$.

Proving the equality \eqref{eqn:purityequality} amounts therefore to proving the equality of dimensions
\[
\dim\coH^1_\dR(\PP^1_t,j_{\infty\dag+}\FT\wt M)=\dim \rP_0.
\]
It will be easier to deal with codimensions. Note that the cokernel of $j_{\infty\dag+}\FT\wt M\hto j_{\infty+}\FT\wt M$ is a $\cD_{\PP^1_t}$-module supported at $t=\infty$, and hence takes the form $C[\partial_{t'}]$ for some finite-dimensional vector space $C$, where $t'$ denotes the coordinate $1/t$ at infinity. Then $\dim C$ is the codimension of $\coH^1_\dR(\PP^1_t,j_{\infty\dag+}\FT\wt M)$ in $\coH^1_\dR(\Afu_t,\FT\wt M)=\coH^1_\dR(\PP^1_t,j_{\infty+}\FT\wt M)$, and we aim at proving the equality
\begin{equation}\label{eq:CPi}
\dim C=\sum_{i\geq1}\dim\rP_i.
\end{equation}
Since $\wt M$ is irreducible (Lemma \ref{lem:irreducibility}), it is an intermediate extension at the origin. Therefore, $\sum_{i\geq1}\dim\rP_i$ is the number of Jordan blocks of $\wt\rN_\tau$ acting on the vanishing cycle space
\[
\phi_{\tau,1}\wt M=\image[\wt\rN_\tau\colon\psi_{\tau,1}\wt M\to \psi_{\tau,1}\wt M]
\]
and, with an argument already used for $\psi_{\tau,1}\wt M$, the following equality holds:
\[
\sum_{i\geq1}\dim\rP_i=\dim\coker[\wt\rN_\tau\colon\phi_{\tau,1}\wt M\to\phi_{\tau,1}\wt M].
\]

On the other hand, the module $C[\partial_{t'}]$ can be computed by replacing $j_{\infty+}\FT\wt M$ with its formalization at infinity $(j_{\infty+}\FT\wt M)^\wedge=\CC[\![t']\!]\otimes_{\CC[t']} j_{\infty+}\FT\wt M$. Decomposing the latter as the direct sum of its regular and irregular parts and taking the equalities
\[
(j_{\infty+}\FT\wt M)^\wedge_\irr=(j_{\infty\dag}\FT\wt M)^\wedge_\irr=(j_{\infty\dag+}\FT\wt M)^\wedge_\irr
\]
into account, we see that the irregular part does not contribute to $C[\partial_{t'}]$, hence an isomorphism
\[
C\simeq \coker\bigl[\phi_{t',1}(j_{\infty\dag+}\FT\wt M)^\wedge_\reg\hto \phi_{t',1}(j_{\infty+}\FT\wt M)^\wedge_\reg\bigr].
\]
The target of this morphism is isomorphic to $\psi_{t',1}(j_{\infty+}\FT\wt M)^\wedge_\reg$ and, by definition of the intermediate extension, the source is isomorphic to the image of
\[
\wh\rN_{t'}\colon\psi_{t',1}(j_{\infty+}\FT\wt M)^\wedge_\reg\to\psi_{t',1}(j_{\infty+}\FT\wt M)^\wedge_\reg,
\]
the morphism above being the inclusion. Therefore, we get the equality
\[
\dim C=\dim\coker\Bigl[\wh\rN_{t'}\colon\psi_{t',1}(j_{\infty+}\FT\wt M)^\wedge_\reg\to\psi_{t',1}(j_{\infty+}\FT\wt M)^\wedge_\reg\Bigr].
\]
Finally, the stationary phase formula identifies $(\phi_{\tau,1}\wt M,\wt\rN_\tau)$ with $(\psi_{t',1}(j_{\infty+}\FT\wt M)^\wedge_\reg,\wh\rN_{t'})$ since $(j_{\infty+}\FT\wt M)^\wedge_\reg$ is the microlocalization of $\wt M$ at the origin (\cf \eg \cite[Prop.\,2.3]{Bibi96bb}). This implies the equality \eqref{eq:CPi}, thus concluding the proof.
\end{proof}

\begin{remark} For $k=n=1$, there are isomorphisms
\[
\coH^1(\Gm, \Kl_2) \simeq \coH^2(\Gm^2, x+y) \simeq \QQ(-2) \quad\text{and}\quad \coH^1_\cp(\Gm, \Kl_2) \simeq \coH^2_{\cp}(\Gm^2, x+y) \simeq \QQ(0)
\]
by the change of variables $(x, z) \mapsto (x, y)=(x, z/x)$ and Example \ref{exem:Gmx}. Hence, the middle Hodge structure $\coH_{\rmid}^1(\Gm, \Kl_2)$ vanishes.
\end{remark}

\begin{thm}\label{Thm:Kl_in_Mcl} Assume $kn \geq 2$. Consider the hypersurface $\KM \subset \Gm^{kn}$ over $\QQ$ defined as the zero locus of the Laurent polynomial
\begin{equation}\label{eq:g_boxplus_k}
g^{\boxplus k}(y)=\sum_{j=1}^k \biggl(\sum_{i=1}^n y_{ji} + \prod_{i=1}^n \sfrac{1}{y_{ji}} \biggr),
\end{equation}
on which the group $\symgp_k\times\mu_{n+1}$ acts
by permuting the indices $j$ in $y_{ji}$
and by $y_{ji} \mto \zeta^{-1}y_{ji}$ for each $\zeta \in \mu_{n+1}$. There is a canonical isomorphism of pure Hodge structures
\begin{align*}
\coH^1_{\rmid}(\Gm,\Sym^k\Kl_{n+1})&\cong\gr_{kn+1}^W[\coH^{kn-1}_{\cp}(\KM)(-1)]^{\symgp_k \times \mu_{n+1},\,\chi_n}\\
&\cong \gr_{kn+1}^W\coH^{kn+1}_{\KM}(\Gm^{kn})^{\symgp_k \times \mu_{n+1},\,\chi_n}.
\end{align*} If $\KM$ is smooth, the last term is also isomorphic to $\gr_{kn+1}^W[\coH^{kn-1}(\KM)(-1)]^{\symgp_k \times \mu_{n+1},\,\chi_n}.$
\end{thm}

\begin{proof} Consider the open immersion $(\Gm^{kn+1},\wtf_k) \subset (\Afu_{\tt}\times\Gm^{kn},\wtf_k)$ of pairs compatible with the action of $\symgp_k\times\mu_{n+1}$. On noting the isomorphism
\[
(\Afu_{\tt}\times\Gm^{kn},\wtf_k)\setminus(\Gm^{kn+1},\wtf_k) \simeq (\Gm^{kn}, 0),
\]
we obtain a diagram with exact rows and a commutative square
\[
\xymatrix@C=.35cm{
\cdots\ar[r]&\coH^{kn-1}(\Gm^{kn})(-1)\ar[r]& \coH^{kn+1}(\Afu_{\tt}\times\Gm^{kn},\wtf_k) \ar[r]&\coH^{kn+1}(\Gm^{kn+1},\wtf_k)\ar[r]&\coH^{kn}(\Gm^{kn})(-1)\ar[r]&0\\
\cdots&\coH_\cp^{kn+1}(\Gm^{kn})\ar[l]& \coH_\cp^{kn+1}(\Afu_{\tt}\times\Gm^{kn},\wtf_k)\ar[u]\ar@{}[ur]|\circlearrowright\ar[l]& \coH_\cp^{kn+1}(\Gm^{kn+1},\wtf_k)\ar[u]\ar[l]&\ar[l]\coH_\cp^{kn}(\Gm^{kn})&\ar[l] 0
}
\]
as in Example \ref{exem:locexactsequence} from the appendix. Exactness at the rightmost term of both rows follows from the fact that~$\Afu_{\tt}\times\Gm^{kn}$ is an affine variety of dimension $kn+1$. In the above diagram, the four terms involving $\Gm^{kn}$ on the corners are pure of weights~$2kn$ (top left), $2kn+2$ (top right), $2$ (bottom left), and $0$ (bottom right). Moreover, by Proposition \ref{prop:weightsHdUf} the two middle upper terms have weights $\geq kn+1$ and the middle lower terms have weights $\leq kn+1$. Since $W_\bbullet$ is an exact functor, the commutative square yields a commutative square with isomorphisms
\[
\xymatrix{
\gr_{kn+1}^W\coH^{kn+1}(\Afu_{\tt}\times\Gm^{kn},\wtf_k) \ar[r]^-\sim&\gr_{kn+1}^W\coH^{kn+1}(\Gm^{kn+1},\wtf_k)\\
\gr_{kn+1}^W\coH_\cp^{kn+1}(\Afu_{\tt}\times\Gm^{kn},\wtf_k)\ar[u]\ar@{}[ur]|\circlearrowright&\gr_{kn+1}^W\coH_\cp^{kn+1}(\Gm^{kn+1},\wtf_k)\ar[u]\ar[l]_-\sim
}
\]
of pure Hodge structures of weight $kn+1$ that are equivariant for the action of $\symgp_k\times\mu_{n+1}$. After taking the $\chi_n$-isotypic components, the right vertical map is an isomorphism by Theorem~\ref{th:weightsMHM}, with source and target isomorphic to $\coH^1_{\rmid}(\Gm,\Sym^k\Kl_{n+1})$. The same then holds for the remaining map.

Recall that the change of variables $(t, x) \mapsto (t, y)$ on $\Gm^{kn+1}$ given by $x_{ji}=\tt y_{ji}$ transforms the Laurent polynomial $\wtf_k(t, x)$ into $t g^{\boxplus k}(y)$. On the new coordinates, $\symgp_k$ permutes the indices~$j$ in $y_{ji}$ and leaves $t$ invariant, while $\zeta \in \mu_{n+1}$ acts through $\tt \mapsto \zeta \tt$ and
$y_{ji} \mapsto \zeta^{-1} y_{ji}$.
Applying Example~\ref{exem:cK}, we obtain isomorphisms
\[
\coH_\cp^{kn+1}(\Afu\times\Gm^{kn},\wtf_k)
\simeq \coH_\cp^{kn+1}(\Afu\times \KM)\simeq \coH^2_{\cp}(\Afu) \otimes \coH^{kn-1}_{\cp}(\KM)\simeq \coH^{kn-1}_{\cp}(\KM)(-1)
\]
and these isomorphisms are equivariant since $\mu_{n+1}$ acts trivially on $\coH^2_{\cp}(\Afu)$. Similarly, Example~\ref{exem:cK} gives an equivariant isomorphism
\[
\coH^{kn+1}(\Afu\times\Gm^{kn},\wtf_k)\simeq \coH^{kn+1}_{\KM}(\Gm^{kn}),
\]
and the right-hand side is also isomorphic to $\coH^{kn-1}(\KM)(-1)$ if $\KM$ is smooth.
\end{proof}

\begin{remark}\label{rem:Ksmooth}
The hypersurface $\KM$ has at worst isolated singularities, its $\ol\QQ$-singular points being those with coordinates $y_{ji} = \zeta_j$ for all $1 \leq j \leq k$ and $1 \leq i \leq n$, where $\zeta_j \in \mu_{n+1}$ are roots of unity
satisfying $\sum_{j=1}^k \zeta_j = 0$.
(This is reminiscent of the computation of the Swan conductor at infinity of the analogue of $\Sym^k\Kl_{n+1}$ over finite fields in \cite[Th.\,3.1]{FW05}.)
For~example, if $n+1$ is a prime number,
then $\KM$ is singular if and only if $k$ is a multiple of~$n+1$; this includes the case $n=1$, in which~$\KM$ is singular if and only if $k$ is even.
\end{remark}

\subsection{The case of characteristic \texorpdfstring{$p$}{p}}

In this section, we prove an analogue of Theorem \ref{Thm:Kl_in_Mcl} for the étale and rigid cohomology of symmetric powers of Kloosterman sheaves over finite fields.

\subsubsection{\'Etale cohomology of symmetric powers of $\ell$-adic Kloosterman sheaves}
\label{sect:etale_char_p}

Let $p$ and $\ell$ be distinct prime numbers, $\ol\FF_p$ an algebraic closure of the finite field $\FF_p$, and $\ol{\QQ}_\ell$ an algebraic closure of the field of $\ell$-adic numbers ${\QQ}_\ell$. All weights of $\ell$-adic sheaves are understood to be considered with respect to a fixed isomorphism from $\ol\QQ_\ell$ to $\CC$.
Let $\zeta$ be a primitive $p$-th root of unity in $\ol{\QQ}_\ell$
and $\psi\colon \FF_p \to \QQ_\ell(\zeta)^\times$ a non\nobreakdash-trivial additive character. The Artin\nobreakdash-Schreier sheaf~$\AS_\psi$ is the rank\nobreakdash-one lisse \'etale sheaf with coefficients in $\QQ_\ell(\zeta)$ on the affine line $\Afu_{\FF_p}$ on which geometric Frobenius acts as multiplication by $\psi(\tr_{\FF_q/\FF_p}(y))$ at each closed point $y$ defined over $\FF_q$. Given a regular function $h \colon X \to \Afu$ on a smooth algebraic variety $X$ over~$\FF_p$, we set $\AS_{\psi(h)}= h^*\AS_\psi$.

Recall the function $f \colon \Gm^{n+1} \to \Afu$ from \eqref{eqn:functionf} and the projection $\pi \colon \Gm^{n+1} \to {\Gm}_{,\tz}$ to the first coordinate, which we now view as defined over $\FF_p$. For each $n \geq 1$, we define a complex of $\ell$-adic sheaves with coefficients in $\QQ_\ell(\zeta)$ on $\Gm$ in a similar way as we did in \eqref{eq:Klpf}, namely:
\[
\Kl_{n+1}=\Rder\pi_\ast \AS_{\psi(f)}[n].
\]
Deligne proved in \cite[Th.\,7.4\,\&\,7.8]{Deligne77SGA} that $\Kl_{n+1}$ is a lisse sheaf of rank $n+1$ concentrated in degree zero that is pure of weight $n$; we shall call it the \emph{$\ell$-adic Kloosterman sheaf}. At each closed point $z\in \Gm(\FF_q)$, the action of geometric Frobenius has trace $(-1)^n \Kl_{n+1}(z;q)$, where
\[
\Kl_{n+1}(z; q)=
\sum_{x_1,\dots, x_n \in \FF_q^\times}
\psi(\mathrm{tr}_{\FF_q \slash \FF_p}
	(x_1+\cdots +x_n+\sfrac{z}{x_1\cdots x_n}))
\]
is the Kloosterman sum in many variables generalizing \eqref{eq:Kl_sum}. Similarly, we define
\[
\wtKl_{n+1}=[n+1]^\ast \Kl_{n+1}\simeq \Rder\wtpi_\ast \AS_{\psi(\wt f)}[n],
\]
where $\wtf(\tt, x)=f(t^{n+1}, x)$ and $\wtpi$ is the projection $(\tt, x) \mapsto \tt$. Note that $\wtKl_{n+1}$ is also concentrated in degree zero. If $p$ divides $n+1$, say $n+1 = p^rm$ with $(p,m) = 1$, the cover~$[n+1]$ of~$\Gm$ factors as the composition of the \'etale cyclic cover $[m]$
and the purely inseparable cover~$[p^r]$. The latter induces an equivalence of categories of étale covers (\cf \eg \cite[Prop.\,3.16]{FK88}), and hence does not change the cohomology. In that case, the action of $\mu_{n+1}$ on $\wtKl_{n+1}$ factors through $\mu_m$.

The proof of Proposition \ref{prop:GTS} adapts verbatim to this setting. Indeed, $\wtKl_{n+1}$ is the localized Fourier transform $j_0^\ast \FT_\psi$ of $\Rder g_\ast \QQ_\ell(\zeta)_{\Gm^{kn}}[n]$, which has thus non-constant cohomology only in degree~zero. From the fact that Fourier transformation exchanges tensor product and additive convolution (\cf \eg \cite[Prop.\,1.2.2.7]{Laumon}), we obtain isomorphisms $\wtKl_{n+1}^{\otimes k} \simeq \Rder\wtpi_{k\ast}\AS_{\psi(\wtf_k)}$ and, arguing as in Corollary~\ref{cor:KummethForTensor} and Proposition \ref{prop:HdRSymKln},
\begin{displaymath}
\coH_{\et,?}^1(\mathbb{G}_{\mathrm{m}, \ol\FF_p}, \Sym^k\Kl_{n+1})
\simeq \coH_{\et,?}^{kn+1}(\mathbb{G}^{\scriptstyle{kn+1}}_{\mathrm{m}, \ol\FF_p}, \AS_{\psi(\wtf_k)})^{
	\symgp_k\times\mu_{n+1},\chi_n}
\end{displaymath}
for $? = \emptyset,\cp,\rmid$. The fact that $\Sym^k\Kl_{n+1}$ has cohomology concentrated in degree one still holds for odd $p$ or if $p=2$ and $n$ is odd, since in those cases the geometric monodromy group of $\Kl_{n+1}$ is either $\mathrm{SL}_{n+1}$ or $\mathrm{Sp}_{n+1}$ over $\QQ_\ell(\zeta)$ by \cite[Th.\,11.1]{KatzGK}. However, if $p=2$ and $n$ is even, the geometric monodromy group is either $\mathrm{SO}_{n+1}$ or $G_2$ and the symmetric powers of the standard representation may contain a copy of the trivial representation.

Besides, the natural morphism from étale cohomology with compact support to usual étale cohomology fits into a $\Gal(\ol\FF_p/\FF_p)$-equivariant long exact sequence
\begin{equation}\label{eqn:exactsequenceinertia}
\begin{aligned}
&(\Sym^k\Kl_{n+1})^{I_0}_{\ol\eta} \oplus (\Sym^k\Kl_{n+1})^{I_\infty}_{\ol\eta} \to \coH^1_{\et, \cp}(\mathbb{G}_{\mathrm{m}, \ol\FF_p}, \Sym^k\Kl_{n+1}) \\
&\to \coH^1_{\et}(\mathbb{G}_{\mathrm{m}, \ol\FF_p}, \Sym^k\Kl_{n+1}) \to ((\Sym^k\Kl_{n+1})_{\ol\eta})_{I_0}(-1) \oplus ((\Sym^k\Kl_{n+1})_{\ol\eta})_{I_\infty}(-1),
\end{aligned}
\end{equation} where $I_0$ and $I_\infty$ stand for the inertia groups at $0$ and $\infty$ acting on a geometric generic fiber of~$\Sym^k\Kl_{n+1}$ (\cf \eg \cite[2.0.7]{KatzGK}). Since this sheaf is pure of weight $kn$, it follows from Weil~II \hbox{\cite[Lem.\,1.8.1]{WeilII}} that the leftmost term in the exact sequence is mixed of weights $\leq kn$. By duality, the rightmost term is then mixed of weights $\geq kn+2$, hence an isomorphism
\[
\gr^W_{kn+1}\coH^1_{\et, \cp}(\mathbb{G}_{\mathrm{m}, \ol\FF_p}, \Sym^k\Kl_{n+1}) \stackrel{\sim}{\to} \gr^W_{kn+1}\coH^1_{\et}(\mathbb{G}_{\mathrm{m}, \ol\FF_p}, \Sym^k\Kl_{n+1}),
\]
the image of which is the étale middle cohomology (this is also proved in \cite[Lem.\,2.5.4]{Yun15}).

Let $\KM \subset \Gm^{kn}$ be the hypersurface over $\FF_p$ defined by the same equation as in \eqref{eq:g_boxplus_k}. From the Gysin long exact sequence for étale cohomology and the localization exact sequence for étale cohomology with compact support, we get as in the proof of Theorem \ref{Thm:Kl_in_Mcl} an isomorphism
\begin{align*}
\coH_{\et,\rmid}^1(\mathbb{G}_{\mathrm{m}, \ol\FF_p}, \Sym^k\Kl_{n+1}) &\cong \gr^W_{kn+1}\coH_{\et,\cp}^{kn+1}((\Afu\times\Gm^{kn})_{\ol\FF_p}, \AS_{\psi(\wtf_k)})^{
	\symgp_k\times\mu_{n+1},\chi_n} \\ &\cong \gr^W_{kn+1}\coH_{\et}^{kn+1}((\Afu\times\Gm^{kn})_{\ol\FF_p}, \AS_{\psi(\wtf_k)})^{
	\symgp_k\times\mu_{n+1},\chi_n}.
\end{align*}
Finally, taking into account that the function $\tt g^{\boxplus k}$ vanishes at $\Afu \times \KM$, the localization exact sequence associated with the closed immersion $\Afu \times \KM \hookrightarrow \Afu \times \Gm^{kn}$ reads
\begin{align*}
\cdots\to &\coH_{\et,\cp}^{kn+1}((\Afu\times(\Gm^{kn}\setminus\KM))_{\ol\FF_p}, \AS_{\psi(tg^{\boxplus k})}) \to \coH_{\et,\cp}^{kn+1}((\Afu\times\Gm^{kn})_{\ol\FF_p}, \AS_{\psi(tg^{\boxplus k})})\\
&\to \coH_{\et,\cp}^{kn+1}((\Afu\times\KM)_{\ol\FF_p}, \QQ_\ell(\zeta)) \to \coH_{\et,\cp}^{kn+2}((\Afu\times(\Gm^{kn}\setminus\KM))_{\ol\FF_p}, \AS_{\psi(tg^{\boxplus k})}) \to \cdots
\end{align*} Since $g^{\boxplus k}$ is invertible on $\Gm^{kn}\setminus\KM$, the change of variables $(t, y_{ji}) \mapsto (s, y_{ji})=(tg^{\boxplus k}, y_{ji})$ and the K\"unneth formula yield an isomorphism
\begin{align*}
\coH_{\et,\cp}^{r}((\Afu\times(\Gm^{kn}\setminus\KM))_{\ol\FF_p}, \AS_{\psi(tg^{\boxplus k})}) &\cong \coH_{\et,\cp}^{r}((\Afu\times(\Gm^{kn}\setminus\KM))_{\ol\FF_p}, \AS_{\psi(s)}) \\ &\cong \bigoplus_{a+b=r} \coH_{\et,\cp}^{a}(\Afu_{\ol\FF_p}, \AS_{\psi}) \otimes \coH_{\et,\cp}^{b}((\Gm^{kn}\setminus\KM)_{\ol\FF_p}, \QQ_\ell(\zeta)),
\end{align*} which shows the vanishing of these groups in all degrees $r$ since the Artin-Schreier sheaf $\AS_{\psi}$ has trivial cohomology (with compact support). Therefore,
\[
\coH_{\et,\cp}^{kn+1}((\Afu\times\Gm^{kn})_{\ol\FF_p}, \AS_{\psi(tg^{\boxplus k})}) \cong \coH_{\et,\cp}^{kn+1}((\Afu\times\KM)_{\ol\FF_p}, \QQ_\ell(\zeta)) \cong \coH_{\et,\cp}^{kn-1}(\KM_{\ol\FF_p}, \QQ_\ell(\zeta))(-1).
\]
A similar argument involving the localization sequence for usual étale cohomology gives
\[
\coH_{\et}^{kn+1}((\Afu\times\Gm^{kn})_{\ol\FF_p}, \AS_{\psi(tg^{\boxplus k})}) \cong \coH_{\et, \KM_{\ol\FF_p}}^{kn+1}(\Gm^{kn}, \QQ_\ell(\zeta)),
\]
and the right-hand side is isomorphic to $\coH_{\et}^{kn-1}(\KM_{\ol\FF_p}, \QQ_\ell(\zeta))(-1)$ if $\KM$ is smooth.
Passing to the $\chi_n$-isotypic components and pulling everything together, we have thus proved:

\begin{thm}\label{thm:etalerealisation} Assume $kn \geq 2$. There is a canonical isomorphism of $\Gal(\ol\FF_p /\FF_p)$-modules
\begin{align*}
\coH_{\et,\rmid}^1(\mathbb{G}_{\mathrm{m}, \ol\FF_p}, \Sym^k\Kl_{n+1})&\cong\gr_{kn+1}^W[\coH^{kn-1}_{\et, \cp}(\KM_{\ol\FF_p}, \QQ_\ell(\zeta))(-1)]^{\symgp_k \times \mu_{n+1},\,\chi_n}\\
&\cong \gr_{kn+1}^W\coH_{\et, \KM_{\ol\FF_p}}^{kn+1}(\Gm^{kn}, \QQ_\ell(\zeta))^{\symgp_k \times \mu_{n+1},\,\chi_n}.
\end{align*}
If $\KM$ is smooth, the last term is also isomorphic to $\gr_{kn+1}^W[\coH^{kn-1}_{\et}(\KM_{\ol\FF_p}, \QQ_\ell(\zeta))(-1)]^{\symgp_k \times \mu_{n+1},\,\chi_n}$.
\end{thm}

\subsubsection{Rigid cohomology of symmetric powers of Kloosterman $F$-isocrystals}
\label{sect:rigid_char_p}

Let $\ol \QQ_p$ be an algebraic closure of $\QQ_p$ and let $\varpi \in \ol\QQ_p$ be an element satisfying $\varpi^{p-1} = -p$. As such elements are in bijection with primitive $p$-th roots of unity in $\ol\QQ_p$, this amounts to choosing a non\nobreakdash-trivial $\ol\QQ_p$-valued additive character on $\FF_p$. The analogue of the Artin-Schreier sheaf is then Dwork's $F$-isocrystal $\AS_\varpi$, which is the rank-one connection $\rd+\varpi\rd z$ with Frobenius structure \hbox{$\exp(\varpi(z^p-z))$} on the overconvergent structure sheaf of $\Afu$ over~\hbox{$K=\QQ_p(\varpi)$}. Given a regular function $h \colon X \to \Afu$ on a smooth algebraic variety $X$ over $\FF_p$, we set $\AS_{\varpi h}= h^*\AS_\varpi$. We refer the reader to Kedlaya's paper \cite{Ked06} for a summary of the properties of rigid cohomology of overconvergent isocrystals that are used below, namely the fact that these groups carry a weight filtration for which the main results of Weil II hold.

The \emph{Kloosterman $F$-isocrystal} $\Kl_{n+1}$ is the overconvergent $F$-isocrystal
\[
\Kl_{n+1}=\Rder \pi_{\mathrm{rig}\ast}\AS_{\varpi f}[n],
\]
which has rank $n+1$ and is pure of weight $n$ (see \cite[\S 1, namely 1.5]{Crew_Kl} for a detailed discussion). Similarly to the $\ell$-adic case, the interpretation of $\wtKl_{n+1}=[n+1]^\ast \Kl_{n+1}$ as a localized Fourier transform $j_0^\ast\FT_\varpi$ yields isomorphisms
\begin{equation}\label{eqn:isomrigidmiddlebis}
\coH_{\rig,?}^1(\Gm/K, \Sym^k\Kl_{n+1})
= \coH_{\rig,?}^{kn+1}(\Gm^{kn+1}/K, \AS_{\varpi\wtf_k})^{
	\symgp_k\times\mu_{n+1},\chi_n}
\end{equation}
for $? = \emptyset,\cp,\rmid$, and combining the analogue of the long exact sequence \eqref{eqn:exactsequenceinertia} relating rigid cohomology and rigid cohomology with compact support (\cf \cite[(2.5.1)]{Ked06}) with the fact that the local contributions from $0$ and $\infty$ are mixed of weights $\leq kn$ by \cite[Prop.\,5.1.4]{Ked06}, we get
\begin{equation}\label{eqn:isomrigidmiddle}
\coH_{\rig,\rmid}^1(\Gm/K, \Sym^k\Kl_{n+1})
\cong \gr^W_{kn+1}\coH_{\rig,\cp}^{kn+1}(\Gm^{kn+1}/K, \AS_{\varpi\wtf_k})^{
	\symgp_k\times\mu_{n+1},\chi_n}.
\end{equation} Finally, the same argument as in the $\ell$-adic case, considering the localization exact sequences for rigid cohomology and rigid cohomology with compact support (\cite[Prop.\,8.2.18(ii)]{LeStum07}) and the Gysin isomorphism \cite[Th.\,4.1.1]{Tsuzuki}, allows one to express the right-hand side of \eqref{eqn:isomrigidmiddle} in terms of the hypersurface $\KM$, obtaining
\begin{align*}
\coH_{\rig,\rmid}^1(\Gm/K, \Sym^k\Kl_{n+1})&\cong\gr_{kn+1}^W[\coH^{kn-1}_{\rig, \cp}(\KM/K)(-1)]^{\symgp_k \times \mu_{n+1},\,\chi_n}\\
&\cong \gr_{kn+1}^W\coH^{kn+1}_{\rig, \KM/K}(\Gm^{kn}/K)^{\symgp_k \times \mu_{n+1},\,\chi_n},
\end{align*}
as well as an isomorphism with $\gr_{kn+1}^W[\coH^{kn-1}_{\rig}(\KM/K)(-1)]^{\symgp_k \times \mu_{n+1},\,\chi_n}$ if $\KM$ is smooth.

\section{Computation of the Hodge filtration}
\label{sec:comp-Hodge}

From now on, we focus on the case $n=1$. In this section, we construct a basis of the de Rham cohomology of the symmetric power~$\Sym^k\Kl_2$ that is well enough behaved with respect to the Hodge filtration for us to be able to prove Theorem~\ref{th:main} from the introduction.

\subsection{Structure of \texorpdfstring{$\Kl_2$}{Kl2}, \texorpdfstring{$\wtKl_2$}{wtKl2}, and their symmetric powers}

In preparation for the proof, we first make some of the statements of Section \ref{sec:expmot} more precise for $n=1$. Among other things, this will allow us to compute the irregularity number (as defined \eg in \cite[Chap.\,III]{Malgrange91}) of $\Sym^k \Kl_2$ and $\Sym^k \wt\Kl_2$ at infinity.

\subsubsection*{Explicit bases of $\Kl_2$ and $\wtKl_2$}

Recall that $\Kl_2$ is a rank-two $\CC[\tz, \tz^{-1}]$-module with connection that has a regular singularity at $\tz=0$ and an irregular singularity with slope $1/2$ at $\tz=\infty$. It is the localized Fourier transform $\Kl_2=\cF(E^{1/y})$ of the rank-one $\CC[y,y^{-1}]$\nobreakdash-module with connection~$E^{1/y}$ generated by $\rme^{1/y}$. Arguing as in the proof of Proposition \ref{prop:Kln}, this interpretation shows that the generator $\basis_0=\rme^{1/y}/y$ of $E^{1/y}$, viewed as an element of $j_0^\bast\FT E^{1/y}$, satisfies
\[
(\tz\partial_\tz)^2\basis_0=\tz\basis_0.
\]
We set $\basis_1=\tz\partial_\tz \basis_0$, so that $\basis_1=\rme^{1/y}/y^2$ holds.
Then $\{\basis_0,\basis_1\}$ is a $\CC[\tz,\tz^{-1}]$-basis of $\Kl_2$
and the matrix of the connection in this basis is given by
\begin{equation}\label{eqn:connection1}
\arraycolsep3.5pt
\tz\partial_\tz(\basis_0,\basis_1)=(\basis_0,\basis_1)\begin{pmatrix}0&\tz\\1&0\end{pmatrix}.
\end{equation}

On the other hand, by its definition as $\SH^0\pi_\bast \fqq{f}$ with $f(\tz,x)=x+\tz/x$ and $\pi(z, x)=z$, the $\CC[z,z^{-1}]$-module $\Kl_2$ is equal to the cokernel
of the relative de~Rham complex
\[
\CC[x, x^{-1}, z, z^{-1}] \To{\de + \pt_x f \de x}
	\CC[x, x^{-1}, z, z^{-1}]\de x.
\]
The elements $\basis_0$ and $\basis_1$ are respectively
given by the classes of $\rd x/x$ and $x\rd x/x$
in this cokernel.

The basis $\{\basis_0,\basis_1\}$ lifts to a $\CC[\tt,\tt^{-1}]$-basis
$\{\wtbasis_0,\wtbasis_1\}$ of $\wtKl_2$ satisfying
\begin{equation}\label{eqn:connection2}
\tfrac12\tt\partial_{\tt}\wtbasis_0=\wtbasis_1,\quad \tfrac12\tt\partial_{\tt}\wtbasis_1=\tt^2\wtbasis_0.
\end{equation}

\subsubsection*{Duality}
Let $\{\basis_0^\vee,\basis_1^\vee\}$ denote the basis of $\Kl_2^\vee$ dual to $\{\basis_0,\basis_1\}$. The matrix of the dual connection in this basis is equal to $-\begin{smallpmatrix}0&1\\z&0\end{smallpmatrix}$, and hence there is an isomorphism $\Kl_2\isom\Kl_2^\vee$ given by~\hbox{$(\basis_0,\basis_1)\mto(\basis_1^\vee,-\basis_0^\vee)$}. It induces a non-degenerate skew-symmetric pairing
\[
\langle\,,\,\rangle\colon \Kl_2\otimes\Kl_2\to\CC[\tz,\tz^{-1}], \qquad \langle\basis_0,\basis_1\rangle=-\langle\basis_1,\basis_0\rangle=1,
\]
which is compatible with the connection. There is a similar formula for $\wtKl_2$.

\subsubsection*{Structure at infinity}

Recall the cover $[2]\colon {\Gm}_{, t} \to {\Gm}_{,\tz}$ induced by $t \mto z=t^2$. Since the irregular singularity of $\Kl_2$ at $\tz=\infty$ has slope $1/2$, the formal stationary phase formula (\cf\cite[Th.\,5.1]{Bibi07a}) gives the formal structure at infinity
\[
\wh\Kl_2=\Cptzm\otimes_{\CC[\tz,\tz^{-1}]}\Kl_2\simeq\ramiftwo_\bast(E^{2\tt}\otimes L_{-1}),
\]
where $L_{-1}$ is the rank-one $\Cpttm$-module with connection $(\Cpttm,\rd+\frac12\rd(\tt^{-1})/\tt^{-1})$, with monodromy $-1$. Let $\sfi$ be a square root of~$-1$ and let $L_{\sfi}=(\Cptzm,\rd+\frac14\rd(\tz^{-1})/\tz^{-1})$ denote the rank-one meromorphic connection with monodromy $\sfi$ around $\infty$, so that $L_{-1}=\ramiftwo^\bast L_{\sfi}$ holds. Then there is also an isomorphism
\begin{equation}\label{eq:formalKl2}
\wh\Kl_2\simeq(\ramiftwo_\bast E^{2\tt})\otimes L_{\sfi}.
\end{equation}
The pullback $\wtKl_2=\ramiftwo^\bast\Kl_2$ has slope one at $\tt=\infty$ and unipotent monodromy with one Jordan block at the origin.
Its formal structure is given by
\[
\wh{\wtKl_2}=\Cpttm\otimes_{\CC[\tt,\tt^{-1}]}\wtKl_2=\ramiftwo^\bast(\ramiftwo_\bast E^{2\tt})\otimes L_{-1}\simeq (E^{2\tt} \oplus E^{-2\tt})\otimes L_{-1}.
\]
The module $\wh{\wtKl_2}$ is endowed with the $\mu_2$-action induced by $\tt\mto-\tt$
that exchanges both summands,
whose invariant submodule is identified with $\ramiftwo_\bast E^{2\tt}$ as a $\Cptzm$-module.

\subsubsection*{$\wtKl_2$ as a localized Fourier transform}

Recall the map $g\colon \Gm\to\Afu_\taut$ defined by
$y \mto y+1/y$. Its critical points are $y=\pm1$ and its critical values are $\taut=\pm2$. There is a decomposition $g_\bast\cO_{\Gm}=\cO_{\Afu}\oplus M_2$, where $M_2$ is an irreducible $\Cltaut$-module with regular singularities at $\taut=\pm2$ and $\taut=\infty$. In fact, $M_2$ is a rank-one free $\CC[\taut,(\taut\pm2)^{-1}]$-module with connection, with monodromy $-\id$ around each point $\taut=\pm2$ and monodromy $\id$ around $\infty$. Its Fourier transform $\FT(M_2)$ is then an irreducible $\Cltt$-module with a regular singularity at $\tt=0$ and an irregular singularity of slope one at $\tt=\infty$. Equation~\eqref{eq:KltildeisFourier} gives in particular:
\begin{equation*}\label{eq:Kltilde2}
\wtKl_2=j_0^\bast\FT(g_\bast\cO_{\Gm})=j_0^\bast\FT(M_2).
\end{equation*}
Therefore, $\wtKl_2$ is an irreducible $\CC[\tt,\tt^{-1}]$-module with connection and $\FT(M_2)$ is equal to the intermediate extension $j_{0\bea}\wtKl_2$. It follows from the formulas \eqref{eqn:connection1} and \eqref{eqn:connection2} for the connection on $\Kl_2$ and $\wtKl_2$ that the monodromy of the nearby cycles $\psi_z\Kl_2$ and $\psi_{\tt}\wtKl_2$ (two-dimensional vector spaces) is the unipotent automorphism with one Jordan block of size two.

\subsubsection*{Bases of $\Sym^k\Kl_2$ and $\Sym^k\wtKl_2$}

Out of the bases $\{\basis_0, \basis_1\}$ of $\Kl_2$ and $\{\wtbasis_0,\wtbasis_1 \}$ of $\wt\Kl_2$, we obtain a $\CC[\tz,\tz^{-1}]$-basis $\{\symbasis_a\}_{0 \leq a \leq k}$ of $\Sym^k\Kl_2$ and a $\CC[\tt,\tt^{-1}]$-basis $\{\wtsymbasis_a\}_{0 \leq a \leq k}$ of $\Sym^k\wtKl_2$ by considering the monomials
\begin{equation}\label{eq:basisSymKl2}
\symbasis_a=\basis_0^{k-a}\basis_1^a \quad\text{and}\quad \wtsymbasis_a=\wtbasis_0{}^{k-a}\wtbasis_1{}^a \quad (0\leq a\leq k).
\end{equation}
In the bases $\{\symbasis_a\}$ and $\{ \wtsymbasis_a\}$,
the connections of $\Sym^k\Kl_2$ and $\Sym^k\wtKl_2$ are given by
\begin{equation}\label{eq:tdteta}
\begin{split}
\tz\partial_\tz\symbasis_a&=(k-a)\symbasis_{a+1}+a\tz\symbasis_{a-1},\\
\tfrac12 \tt\partial_{\tt}\wtsymbasis_a&=(k-a)\wtsymbasis_{a+1}+a\tt^2\wtsymbasis_{a-1},
\end{split}
\end{equation} with the convention $\symbasis_b=\wtsymbasis_b=0$ for $b<0$ or $b>k$.

We gather in the following proposition the properties of $\Sym^k\Kl_2$ and $\Sym^k\wtKl_2$.

\begin{prop}\label{prop:SymkKl}
The free $\CC[\tz,\tz^{-1}]$-module with connection~$\Sym^k\Kl_2$ and the free $\CC[\tt,\tt^{-1}]$-module with connection $\Sym^k\wtKl_2$ satisfy the following properties:
\begin{enumerate}
\item\label{prop:SymkKl1}
$\rk\Sym^k\Kl_2=\rk\Sym^k\wtKl_2=k+1$ and the monodromy of $\Sym^k\Kl_2$ around $\tz=0$ (\resp of $\Sym^k\wtKl_2$ around $\tt=0$) is unipotent with only one Jordan block of size $k+1$.

\item\label{prop:SymkKl2b}
$\Sym^k\Kl_2$ and $\Sym^k\wtKl_2$ are endowed with a $(-1)^k$-symmetric non-degenerate pairing.
\item\label{prop:SymkKl4}
The formal structure of $\Sym^k\Kl_2$ at infinity is given by
\[
\Sym^k\wh\Kl_2\simeq
\begin{cases}
\bigoplus_{j=0}^{(k-1)/2}(\ramiftwo_\bast E^{2(2j-k)\tt})\otimes L_{\sfi}^{\otimes k}&\text{if $k$ is odd},\\[5pt]
L_{\sfi}^{\otimes k}\oplus\bigoplus_{j=0}^{k/2-1}(\ramiftwo_\bast E^{2(2j-k)\tt})\otimes L_{\sfi}^{\otimes k}&\text{if $k$ is even}.
\end{cases}
\]
In particular, $\Sym^k \Kl_2$ has irregularity number $\irr_\infty(\Sym^k \Kl_2)=\flr{(k+1)/2}$ and, for odd~$k$, $\Sym^k\wh\Kl_2$ is purely irregular at infinity.
\end{enumerate}
\end{prop}

\begin{proof}
\eqref{prop:SymkKl1} follows from the property that the $k$-th symmetric power of the standard representation of~$\sld$ is an irreducible representation of dimension $k+1$.

\eqref{prop:SymkKl2b}
The $(-1)^k$-symmetric non-degenerate pairings of $\Sym^k\Kl_2$ and $\Sym^k\wtKl_2$ are induced by the skew-symmetric self-duality of $\Kl_2$ and $\wtKl_2$ respectively.

\eqref{prop:SymkKl4}
The formal structure of $\wh\Kl_2$ at infinity given by \eqref{eq:formalKl2} implies the following:

\begin{lemma}\label{lem:SymwhKl2}
There is an isomorphism $\Sym^k\wh\Kl_2\simeq\Sym^k(\ramiftwo_\bast E^{2\tt})\otimes L_{\sfi}^{\otimes k}$. Moreover, $\Sym^k(\ramiftwo_\bast E^{2\tt})$ is the invariant submodule of $\ramiftwo^\bast\Sym^k(\ramiftwo_\bast E^{2\tt})\simeq\Sym^k(\ramiftwo^\bast\ramiftwo_\bast E^{2\tt})$ under $\tt \mapsto -\tt$. The latter $\Cpttm$-module with connection decomposes as $\bigoplus_{j=0}^k E^{2(2j-k)\tt}$.
\end{lemma}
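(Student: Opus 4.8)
The plan is to deduce all three assertions of the lemma from the formal decomposition \eqref{eq:formalKl2} of $\wh\Kl_2$ at $\tz=\infty$, together with the elementary compatibility of the functor $\Sym^k$ with twisting by a rank-one connection and with pullback along the ramified double cover $\ramiftwo$.

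First I would apply $\Sym^k$ to the isomorphism $\wh\Kl_2\simeq(\ramiftwo_\bast E^{2\tt})\otimes L_\sfi$ provided by \eqref{eq:formalKl2}. Since $L_\sfi$ has rank one, the action of $\symgp_k$ permuting the tensor factors of $L_\sfi^{\otimes k}$ is the identity; hence, under the canonical identification $\bigl((\ramiftwo_\bast E^{2\tt})\otimes L_\sfi\bigr)^{\otimes k}\simeq(\ramiftwo_\bast E^{2\tt})^{\otimes k}\otimes L_\sfi^{\otimes k}$, the symmetric group acts only through the first tensor factor. Taking $\symgp_k$-invariants then yields the first isomorphism $\Sym^k\wh\Kl_2\simeq\Sym^k(\ramiftwo_\bast E^{2\tt})\otimes L_\sfi^{\otimes k}$.

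Next, the pullback $\ramiftwo^\bast$ is an exact tensor functor from $\Cptzm$-modules with connection to $\Cpttm$-modules with connection, so it commutes with the formation of $k$-th tensor powers and with the permutation action of $\symgp_k$, whence $\ramiftwo^\bast\Sym^k(\ramiftwo_\bast E^{2\tt})\simeq\Sym^k(\ramiftwo^\bast\ramiftwo_\bast E^{2\tt})$. By the same exactness (over a field of characteristic zero) the $\mu_2$-averaging projector commutes with $\Sym^k$; applying it to the identification $\ramiftwo^\bast\ramiftwo_\bast E^{2\tt}\simeq E^{2\tt}\oplus E^{-2\tt}$ recorded in the discussion of the structure at infinity, in which $\mu_2$ acts by $\tt\mto-\tt$ exchanging the two summands with invariant submodule $\ramiftwo_\bast E^{2\tt}$, shows that $\Sym^k(\ramiftwo_\bast E^{2\tt})$ is the $\mu_2$-invariant submodule of $\ramiftwo^\bast\Sym^k(\ramiftwo_\bast E^{2\tt})$.

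Finally I would carry out the remaining computation: for invertible objects $A,B$ one has $\Sym^k(A\oplus B)\simeq\bigoplus_{j=0}^kA^{\otimes j}\otimes B^{\otimes(k-j)}$, and here $(E^{2\tt})^{\otimes j}\otimes(E^{-2\tt})^{\otimes(k-j)}\simeq E^{2(2j-k)\tt}$ by additivity of the exponent, giving the stated decomposition $\Sym^k(\ramiftwo^\bast\ramiftwo_\bast E^{2\tt})\simeq\bigoplus_{j=0}^kE^{2(2j-k)\tt}$. I do not expect any genuine obstacle here; the only point deserving a word of justification is that passing to invariants under $\symgp_k$ or $\mu_2$ commutes with $\Sym^k$ and with $\ramiftwo^\bast$, which follows at once from the exactness of all the functors involved in characteristic zero.
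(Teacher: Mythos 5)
Your proof is correct and follows the route the paper intends: the lemma is stated there with no argument at all, being regarded as immediate from the formal decomposition \eqref{eq:formalKl2} together with the preceding description of $\ramiftwo^\bast\ramiftwo_\bast E^{2\tt}\simeq E^{2\tt}\oplus E^{-2\tt}$ and its $\mu_2$-action, and your write-up simply fills in these routine steps (splitting off $L_{\sfi}^{\otimes k}$, compatibility of $\ramiftwo^\bast$ with $\Sym^k$, and the rank-one computation $\Sym^k(A\oplus B)\simeq\bigoplus_j A^{\otimes j}\otimes B^{\otimes(k-j)}$). One small caveat: the justification ``the $\mu_2$-averaging projector commutes with $\Sym^k$ by exactness'' is not the right reason --- for an honest linear group action, invariants do \emph{not} commute with symmetric powers (take $\mu_2$ acting by $-1$ on a line) --- and moreover the two symmetric powers in question are taken over different rings, $\Cpttm$ upstairs and $\Cptzm$ downstairs; what you are actually using is Galois descent along the double cover $\ramiftwo$, namely that the $\mu_2$-action is semilinear (it moves $\tt\mapsto-\tt$) so that $(\ramiftwo^\bast N)^{\mu_2}\simeq N$ for \emph{any} $\Cptzm$-module with connection $N$, applied to $N=\Sym^k(\ramiftwo_\bast E^{2\tt})$; with that phrasing the step is immediate and the rest of your argument stands as written.
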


We know that $\Sym^k(\ramiftwo_\bast E^{2\tt})$ is the invariant part of $\bigoplus_{j=0}^k E^{2(2j-k)\tt}$ under $\tt\mto-\tt$ . On the one hand, the invariant part of $E^{a\tt}\oplus E^{-a\tt}$ is $\ramiftwo_\bast E^{a\tt}=\ramiftwo_\bast E^{-a\tt}$ for $a\neq0$. On the other hand, the invariant part of $(\Cpttm,\rd)$ is $(\Cptzm,\rd)$. Therefore, there is an isomorphism
\begin{equation*}\label{eq:formsymk}
\Sym^k(\ramiftwo_\bast E^{2\tt})\simeq
\begin{cases}
\bigoplus_{j=0}^{(k-1)/2}\ramiftwo_\bast E^{2(2j-k)\tt}&\text{if $k$ is odd},\\[5pt]
(\Cptzm,\rd)\oplus\bigoplus_{j=0}^{k/2-1}\ramiftwo_\bast E^{2(2j-k)\tt}&\text{if $k$ is even},
\end{cases}
\end{equation*}
from which \ref{prop:SymkKl}\eqref{prop:SymkKl4} follows.
\end{proof}

It follows from the proposition that the formal regular component $(\Sym^k\wh\Kl_2)_\reg$ has rank zero for odd $k$ and rank one for even $k$, and in the latter case the formal monodromy has eigenvalue one if and only if $k\equiv0\bmod4$. From the proof, we also get the irregularity number
\begin{equation}\label{eq:irregSymk}
\irr_\infty(\Sym^k\wtKl_2)=\begin{cases}
k+1&\text{if $k$ is odd},\\
k&\text{if $k$ is even}.
\end{cases}
\end{equation}
Similarly, the formalization of $\Sym^k\wtKl_2$ at infinity is given by
\begin{equation}\label{eq:formsymkwt}
\Sym^k\wh{\wtKl_2}\simeq\bigoplus_{j=0}^{k} E^{2(2j-k)\tt}\otimes L_{-1}^{\otimes k}
\end{equation} both for odd and even values of $k$.

\begin{cor}\label{cor:subext} Let $j_0\colon \Gm\hto\Afu_\tz$ and $j_\infty\colon \Gm\hto\Afu_{\sfrac{1}{\tz}}$ denote the inclusions.
\begin{enumerate}
\item\label{cor:subext1}
The natural morphism
\[
j_{\infty\bea}\Sym^k\Kl_2\to j_{\infty\bast}\Sym^k\Kl_2
\]
is an isomorphism if $k\not\equiv\nobreak0\bmod4$. The same holds for $\Sym^k\wtKl_2$ if $k\not\equiv\nobreak0\bmod2$.
\item\label{cor:subext2}
Let $N$ be a proper $\Cltz$-submodule of $j_{0\bast}\Sym^k\Kl_2$ satisfying $j_0^\bast N=\Sym^k\Kl_2$. Then the equality $N=j_{0\bea}\Sym^k\Kl_2$ holds. The same is true for $\Sym^k \wtKl_2$.
\end{enumerate}
\end{cor}

\begin{proof}
Statement \eqref{cor:subext1} follows from \ref{prop:SymkKl}\eqref{prop:SymkKl4}. For \eqref{cor:subext2}, the question is local analytic around the point $\tz=0$. Set $E=\psi_{\tz,1}\Sym^k\Kl_2$ and let $\rN$ denote the nilpotent endomorphism on nearby cycles (\cf Section \ref{subsec:notationD}). Giving $N$ is equivalent to giving a subspace~$F$ of $E$ stable by $\rN$, together with two morphisms $E\To{\can}F\To{\var}E$ commuting with~$\rN$ such that $\var\circ\can=\rN$ and that $\var$ is the natural inclusion. Hence there is an inclusion $F\supset\image\can=\image\rN$. Since $\rN$ has only one Jordan block, $\image\rN$ has codimension one in $E$ and since $F\neq E$ by the properness assumption, this implies $F=\image\rN$, as wanted.
\end{proof}

\subsubsection*{The inverse Fourier transform of \texorpdfstring{$\Sym^k\wtKl_2$}{wtSymKl2}}

Some results of Section \ref{subsec:FSymkwtKln} can be made more explicit for $n=1$. Namely, applying the vanishing cycle functor at $\tt=0$ to the exact sequence~\eqref{eq:intermextwt} with $n=1$ we find the exact sequence
\[
0\to \phi_{\tt,1}j_{0\bea}(\Sym^k\wtKl_2)\to\phi_{\tt,1}j_{0\bast}(\Sym^k\wtKl_2)\to \phi_{\tt,1}\wt C_{k,1}\to0.
\]
By definition of the functors $j_{0\bast}$ and $j_{0\bea}$, the middle term $\phi_{\tt,1}j_{0\bast}(\Sym^k\wtKl_2)$ is canonically identified with the nearby cycle module $\psi_{\tt,1}j_{0\bast}(\Sym^k\wtKl_2)$, and $\phi_{\tt,1}j_{0\bea}(\Sym^k\wtKl_2)$ is then identified with the subspace $\image\wtrN$, where $\wtrN$ is the nilpotent endomorphism acting on $\psi_{\tt,1}j_{0\bast}(\Sym^k\wtKl_2)$. Since $\wtrN$ has only one Jordan block of size $k+1$, it follows that $\phi_{\tt,1}\wt C_{k,1}$ is one-dimensional, and that $\wtrN$ acting on $\phi_{\tt,1}j_{0\bea}(\Sym^k\wtKl_2)$ has only one Jordan block of size~$k$.

Let us consider the exact sequence \eqref{eq:wtM} in the present setting. The origin $\taut=0$ is a singular point for $\wtM$ and $\Pi(\wtM)$ if and only if the formal regular component of $\Sym^k\wtKl_2$ at infinity is non-zero, and then $\dim\phi_{\taut}\wtM=\dim\phi_{\taut}\Pi(\wtM)$ is equal to the rank of this formal regular component. Arguing as for $\Kl_2$, this rank is equal to zero if $k$ is odd and to one if $k$ is even, and in the latter case the eigenvalue of the corresponding formal monodromy is $(-1)^k$.

Let us summarize the properties of $\wtM$ and $\Pi(\wtM)$.

\begin{cor}\label{cor:wtM}
Let $\wtM$ be the inverse Fourier transform of $j_{0\bea}\Sym^k\wtKl_{n+1}$.
\begin{enumerate}
\item\label{cor:wtM1}
$\Pi(\wtM)$ is a regular holonomic $\Cltaut$-module, generically of rank $k+1$ with singularities at the points $\taut=2(2j-k)$ for $j=0,\dots,k$. The vanishing cycle space at each of these singularities has rank one with local monodromy equal to $(-1)^k\id$. At $\taut=\infty$, the monodromy is unipotent, with only one Jordan block of size $k+1$.
\item\label{cor:wtM2}
$\wtM$ is a regular holonomic $\Cltaut$-module, generically of rank $k$, with singularities at the points $\taut=2(2j-k)$ for $j=0,\dots,k$. The vanishing cycle space at each of these singularities has rank one with local monodromy equal to $(-1)^k\id$. At~$\taut=\infty$, the monodromy is unipotent, with only one Jordan block of size $k$.
\end{enumerate}
\end{cor}

\subsection{De Rham cohomology on \texorpdfstring{$\Gm$}{Gm}}
As we saw in Proposition~\ref{prop:HdRSymKln}, the de Rham cohomology of $\Sym^k \Kl_2$ is concentrated in degree one. Thanks to the analogue of the Grothen\-dieck\nobreakdash-Ogg\nobreakdash-Shafarevich formula for vector bundles with connection (see \eg \cite[Th.\,2.9.9]{Katz90}), the dimension of $\coH_\dR^1(\Gm,\Sym^k\Kl_2)$ is equal to the irregularity number of $\Sym^k\Kl_2$ at infinity. From Proposition~\ref{prop:SymkKl}\eqref{prop:SymkKl4}, we thus obtain
\begin{equation}\label{eq:dimSymk}
\dim \coH_\dR^1(\Gm,\Sym^k\Kl_2)=\irr_\infty(\Sym^k\Kl_2)=\flr{\frac{k+1}{2}}.
\end{equation}
Similarly, using \eqref{eq:irregSymk} we get
\begin{equation*}\label{eq:dimtildeSymk}
\dim \coH_\dR^1(\Gm,\Sym^k\wtKl_2)=\irr_\infty(\Sym^k\wtKl_2)=\begin{cases}
k+1&\text{if $k$ is odd},\\
k&\text{if $k$ is even}.
\end{cases}
\end{equation*}
By self-duality (Proposition~\ref{prop:SymkKl}\eqref{prop:SymkKl2b}) and Poincaré duality, there are isomorphisms
\begin{equation}
\begin{split}
\coH_{\dR,\rc}^1(\Gm,\Sym^k\Kl_2)&\simeq \coH_\dR^1(\Gm,\Sym^k\Kl_2)^\vee,\\
\coH_{\dR,\rc}^1(\Gm,\Sym^k\wtKl_2)&\simeq \coH_\dR^1(\Gm,\Sym^k\wtKl_2)^\vee.
\end{split}
\end{equation}

We consider the intermediate extension $\cD_{\PP^1}$-modules
$j_\bea\Sym^k\Kl_2$ and $j_\bea\Sym^k\wt\Kl_2$
with respect to the inclusion $j\colon \Gm\hto\PP^1$, which according to Corollary~\ref{cor:vanishingSymKln} compute the middle de Rham cohomology. Recall from \loccit that it is also concentrated in degree one.

\begin{prop}\label{prop:dimH1mid} Let $\delta_{4\ZZ}$ denote the characteristic function of multiples of $4$. We have:
\begin{equation}\label{eqn:dimensionHmid}
\dim \coH_{\dR,\rmid}^1(\Gm,\Sym^k\Kl_2)
= \flr{\dfrac{k-1}{2}} -\delta_{4\ZZ}(k)
=
\begin{cases}
\dfrac{k-1}2&\text{if $k$ is odd},\\[7pt]
2\flr{\dfrac{k-1}{4}} & \text{if $k$ is even},
\end{cases}
\end{equation}
\[
\dim \coH_{\dR,\rmid}^1(\Gm,\Sym^k\wtKl_2)=
\begin{cases}
k&\text{if $k$ is odd},\\
k-2&\text{if $k$ is even}.
\end{cases}
\]
\end{prop}

\begin{proof}
We first consider the intermediate extension by $j_0\colon \Gm\hto\Afu_\tz$. Corollary~\ref{cor:subext}\eqref{cor:subext2} and its proof imply that the cokernel of the injective morphism of $\Cltz$-modules
\[
j_{0\bea}\Sym^k\Kl_2\to j_{0\bast}\Sym^k\Kl_2
\]
is equal to $i_{0\bast}\CC$, where $i_0\colon \{0\}\hto\Afu_\tz$ is the inclusion. Besides, for the intermediate extension by $j_\infty\colon \Gm\hto\Afu_{1/\tz}$, we note that the natural morphism
$j_{\infty\bea}\Sym^k\Kl_2\to j_{\infty\bast}\Sym^k\Kl_2$
is an isomorphism if the formal completion of $\Sym^k\Kl_2$ at~$\infty$
is purely irregular or has no monodromy invariants,
that~is, if $k\not\equiv0\bmod4$ according to Proposition~\ref{prop:SymkKl}\eqref{prop:SymkKl4}.
Otherwise, since the formal regular component has rank one and monodromy equal to the identity, this morphism is injective with cokernel isomorphic to $i_{\infty\bast}\CC$, where $i_\infty \colon \{\infty\}\hto\Afu_{1/\tz}$ is the inclusion. Therefore, the equality
\begin{equation}\label{eq:codimmid}
\dim \coH_{\dR,\rmid}^1(\Gm,\Sym^k\Kl_2)=
\begin{cases}
\dim \coH_\dR^1(\Gm,\Sym^k\Kl_2)-1&\text{if }4\nmid k,\\
\dim \coH_\dR^1(\Gm,\Sym^k\Kl_2)-2&\text{if }4 \mid k,
\end{cases}
\end{equation}
holds, and we conclude from \eqref{eq:dimSymk}.
The proof for $\Sym^k\wtKl_2$ is similar.
\end{proof}

We can now give explicit bases of the de Rham cohomology of $\Sym^k\Kl_2$ and $\Sym^k\wtKl_2$.

\begin{prop}\label{prop:Coh_grM}
The space $\coH_\dR^1(\Gm,\Sym^k\Kl_2)$ has a basis consisting of the classes
\[
\tz^j\basis_0^k\,\frac{\rd\tz}{\tz},\quad 0\leq j<\flr{\frac{k+1}{2}},
\]
and the space $\coH_\dR^1(\Gm,\Sym^k\wtKl_2)$ has a basis consisting of the classes
\[
\tt^j\wtbasis_0{}^k\,\frac{\rd\tt}{\tt},\quad 0\leq j<2\flr{\frac{k+1}{2}}.
\]
\end{prop}

\begin{proof}
We will only consider the case of $\Sym^k\Kl_2$, that of $\Sym^k\wtKl_2$ being similar by replacing the grading below with the one for which $\deg\tt=1$. The space $\coH_\dR^r(\Gm, \Sym^k\Kl_2)$ is identified with the cohomology of the two-term complex
\[
G \To{\tz\partial_\tz} G,\quad
G= \text{the $\Cltzm$-module $\Sym^k\Kl_2$}.
\]
Therefore, the map $\tz\partial_\tz$ is injective and the goal is to find a basis of its cokernel. Recall the $\CC[\tz, \tz^{-1}]$-basis $\{u_0, \dots, u_k\}$ of $G$ from \eqref{eq:basisSymKl2} and consider the $\CC[\tz]$-submodule
\[
G^\bast = \bigoplus_{i=0}^k \CC[\tz]\symbasis_i \subset G.
\]
Formula \eqref{eq:tdteta} shows that $G^\bast $ is stable under the action of~$\tz\partial_\tz$. In fact, the coherent sheaf on $\Afu$ associated with $G^+$ is Deligne's canonical extension of $\Sym^k\Kl_2$ to a logarithmic connection whose residue at $0$ has all eigenvalues equal to $0$.

\begin{lemma}\label{Lemma:G_to_Gplus}
The inclusion $(G^\bast , \tz\partial_\tz) \to (G, \tz\partial_\tz)$ is a quasi-isomorphism.
\end{lemma}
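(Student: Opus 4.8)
The plan is to pass to the quotient complex and prove it is acyclic. Formula \eqref{eq:tdteta} shows that $\tz\partial_\tz$ sends $\CC[\tz]\symbasis_a$ into $\CC[\tz]\symbasis_{a+1}+\CC[\tz]\tz\symbasis_{a-1}\subset G^\bast$, so the logarithmic field $\tz\partial_\tz$ preserves $G^\bast$ and we obtain a short exact sequence of two-term complexes
\[
0\to\bigl[G^\bast\To{\tz\partial_\tz}G^\bast\bigr]\to\bigl[G\To{\tz\partial_\tz}G\bigr]\to\bigl[(G/G^\bast)\To{\tz\partial_\tz}(G/G^\bast)\bigr]\to0.
\]
By the associated long exact cohomology sequence, it is enough to show that $\tz\partial_\tz$ acts bijectively on $G/G^\bast$.

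First I would describe this action explicitly. Since $G$ is $\CC[\tz,\tz^{-1}]$-free on the monomials $\symbasis_a=\basis_0^{k-a}\basis_1^a$, we have $G/G^\bast\cong\bigoplus_{a=0}^k(\CC[\tz,\tz^{-1}]/\CC[\tz])\symbasis_a$, with representatives in $\bigoplus_a\tz^{-1}\CC[\tz^{-1}]\symbasis_a$. Filter $G/G^\bast$ by pole order at $\tz=0$, letting $\mathcal F_M$ be the span of the classes $\tz^{-m}\symbasis_a$ with $1\le m\le M$ and $0\le a\le k$. From \eqref{eq:tdteta} one reads that $\tz\partial_\tz$ preserves each $\mathcal F_M$: the summand $(k-a)\symbasis_{a+1}$ keeps the pole order, the summand $a\tz\symbasis_{a-1}$ strictly lowers it (and produces a class in $G^\bast$, hence $0$, when applied to a simple pole), and $\tz\partial_\tz(\tz^{-m})=-m\tz^{-m}$. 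Thus $\tz\partial_\tz$ is a filtered endomorphism, and on the rank-$(k+1)$ graded piece $\gr^{\mathcal F}_M\cong\bigoplus_a\CC\,\tz^{-M}\symbasis_a$ it acts as $-M\cdot\id+\rN$, where $\rN(\symbasis_a)=(k-a)\symbasis_{a+1}$ is the residue of $\Sym^k\Kl_2$ at the origin. By Proposition \ref{prop:SymkKl}\eqref{prop:SymkKl1} the monodromy at $\tz=0$ is unipotent with a single Jordan block, so $\rN$ is nilpotent and $-M\cdot\id+\rN$ is invertible for every $M\ge1$.

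A leading-term argument then finishes the proof. If $x\in G/G^\bast$ is nonzero with top-order term $\tz^{-M}v$, $v\ne0$, then the top-order term of $\tz\partial_\tz x$ is $\tz^{-M}(-M\cdot\id+\rN)v\ne0$, so $\tz\partial_\tz$ is injective; and given $y$ of pole order $\le M_0$ one solves $\tz\partial_\tz x=y$ by descending induction on the pole order, inverting $-M\cdot\id+\rN$ at each stage and observing that the off-diagonal term $a\tz\symbasis_{a-1}$ only perturbs strictly lower orders (and vanishes in the quotient at order $0$), so the recursion terminates with $x$ of pole order $\le M_0$. Hence $\tz\partial_\tz$ is bijective on $G/G^\bast$, the quotient complex is acyclic, and the inclusion $(G^\bast,\tz\partial_\tz)\to(G,\tz\partial_\tz)$ is a quasi-isomorphism. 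The one point to watch — and the conceptual content of the lemma — is that one must work throughout with $\tz\partial_\tz$ rather than $\partial_\tz$, which does not preserve $G^\bast$; this is legitimate precisely because $G^\bast$ is the Deligne extension with \emph{nilpotent} residue, the feature that makes all the diagonal blocks $-M\cdot\id+\rN$, $M\ge1$, invertible.
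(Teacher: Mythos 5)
Your proof is correct and follows essentially the same route as the paper: filter $G/G^\bast$ (equivalently, write $G=\bigcup_r \tz^{-r}G^\bast$) by pole order and observe that on each graded piece $\tz\partial_\tz$ acts as $-M\cdot\id$ plus a nilpotent operator (a single Jordan block), hence invertibly, which forces the quotient complex to be acyclic. The paper states this in one line; your version merely spells out the leading-term induction and the identification of the nilpotent part with the residue, so there is nothing to add.
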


\begin{proof}
This follows from the equality $G = \bigcup_{r\geq 0} \tz^{-r}G^\bast $ and the fact that $\tz\partial_\tz$ acts invertibly
on $\tz^{-(r+1)}G^\bast /\tz^{-r}G^\bast $ (with eigenvalue $-(r+1)$ and one Jordan block) for all $r\geq 0$.
\end{proof}

Let $\deg\colon G^\bast \to (\ZZ_{\geq 0},+)$ be the multiplicative \emph{degree} map uniquely determined by
\begin{equation}\label{eq:deg_G}
\deg \tz = 2, \quad \deg \symbasis_i = i.
\end{equation}
(This degree is the one induced from the Newton degree associated with the Laurent polynomial $f_k$ that naturally appears in the computation of the tensor power $\Kl_2^{\otimes k}$, see Section~\ref{subsubsec:wj} \textit{infra}.)
Then $\tz\partial_\tz$ is (inhomogeneous) of degree one.
Let $\gr G^\bast $ be the associated graded module. The induced graded $\CC$-linear map $\ov{\tz\partial_\tz}\colon \gr G^\bast \to \gr G^\bast [1]$ is $\CC[z]$-linear and we shall regard it as a two-term complex $(\gr G^\bast , \ov{\tz\partial_\tz})$.

\begin{lemma}\label{Lemma:Coh_grG}
If $k$ is odd, then $\coH^0(\gr G^\bast , \ov{\tz\partial_\tz}) = 0$ and the vector space $\coH^1(\gr G^\bast , \ov{\tz\partial_\tz})$ is generated by the classes of $\tz^j\symbasis_0$, for $0\leq j \leq \sfrac{(k-1)}{2}$.

If $k$ is even, then $\coH^0(\gr G^\bast , \ov{\tz\partial_\tz})$ and $\coH^1(\gr G^\bast , \ov{\tz\partial_\tz})$ are the free rank-one modules over the graded ring $\CC[z]$ generated by~$\sum_{i=0}^{k/2} (-1)^i\binom{k/2}{i} \tz^i \symbasis_{k-2i}$ and the class of $\symbasis_0$ respectively.
\end{lemma}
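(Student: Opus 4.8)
The plan is to compute $\ov{\tz\partial_\tz}$ explicitly as a $\CC[\tz]$-linear map. Passing \eqref{eq:tdteta} to the associated graded kills the term $i\tz^i\symbasis_a$ produced by $\tz\partial_\tz(\tz^i)=i\tz^i$ (it has the same degree as the input, not one more), so on $\gr G^\bast=\bigoplus_{a=0}^k\CC[\tz]\symbasis_a$ the induced map is $\CC[\tz]$-linear, homogeneous of degree one, and given by $\ov{\tz\partial_\tz}\,\symbasis_a=(k-a)\symbasis_{a+1}+a\tz\,\symbasis_{a-1}$; writing $\rH^0$ for its kernel and $\rH^1$ for its cokernel, the statement is a fact about this $(k+1)\times(k+1)$ matrix over the PID $\CC[\tz]$. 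Observe that $(\gr G^\bast,\ov{\tz\partial_\tz})$ is the $k$-th symmetric power of the rank-two free module $\CC[\tz]\basis_0\oplus\CC[\tz]\basis_1$ equipped with the $\CC[\tz]$-linear derivation $\basis_0\mto\basis_1$, $\basis_1\mto\tz\basis_0$ (the graded version of the connection $\tz\partial_\tz(\basis_0,\basis_1)=(\basis_0,\basis_1)\begin{smallpmatrix}0&\tz\\1&0\end{smallpmatrix}$). After the base change $\tz=\tt^2$ this derivation is diagonalised by $\mathbf u_\pm:=\basis_1\mp\tt\,\basis_0$, with eigenvalues $\mp\tt$, so over $\CC((\tt))$ the map $\ov{\tz\partial_\tz}$ is diagonal in the basis $\mathbf u_+^{k-j}\mathbf u_-^{j}$ ($0\le j\le k$) with eigenvalue $(2j-k)\tt$. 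Hence $\det\ov{\tz\partial_\tz}=\tt^{k+1}\prod_{j=0}^k(2j-k)$: a nonzero scalar times $\tz^{(k+1)/2}$ when $k$ is odd, and $0$ when $k$ is even, the unique vanishing eigenvalue ($j=k/2$) showing that in the even case the rank is exactly $k$. In particular $\rH^0=0$ for $k$ odd, while for $k$ even both $\rH^0$ and $\rH^1$ are free $\CC[\tz]$-modules of rank one. Moreover $w:=(\basis_1^2-\tz\basis_0^2)^{k/2}$ is killed by the derivation, hence by $\ov{\tz\partial_\tz}$, and expanding the power gives $w=\sum_{i=0}^{k/2}(-1)^i\binom{k/2}{i}\tz^i\symbasis_{k-2i}$; since its $\symbasis_k$-coefficient is a unit, $w$ is not a proper $\CC[\tz]$-multiple of any element, so it generates $\rH^0$ when $k$ is even. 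This gives the asserted description of $\rH^0$.

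For $\rH^1$ I would use the relations $\ov{\tz\partial_\tz}(\tz^j\symbasis_a)\equiv 0$ to reduce every class to a power of $\symbasis_0$. For $0\le a\le k-1$ they read $\tz^j\symbasis_1\equiv 0$ (case $a=0$) and $\tz^j\symbasis_{a+1}\equiv-\tfrac{a}{k-a}\tz^{j+1}\symbasis_{a-1}$ (case $1\le a\le k-1$), the denominators $k-a$ lying in $\{1,\dots,k-1\}$ and hence being nonzero. Iterating downward in steps of two, every odd-index class $\tz^j\symbasis_{2m+1}$ reduces to $0$ and every even-index class $\tz^j\symbasis_{2m}$ reduces to a nonzero multiple of $\tz^{j+m}\symbasis_0$; thus $\rH^1$ is cyclic, generated by the class of $\symbasis_0$. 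When $k$ is odd, the remaining relation $\ov{\tz\partial_\tz}\symbasis_k=k\tz\,\symbasis_{k-1}\equiv 0$ together with $\symbasis_{k-1}\equiv(\text{nonzero scalar})\,\tz^{(k-1)/2}\symbasis_0$ forces $\tz^{(k+1)/2}\symbasis_0\equiv 0$, so $\rH^1$ is spanned by the $(k+1)/2$ classes $\symbasis_0,\tz\symbasis_0,\dots,\tz^{(k-1)/2}\symbasis_0$; as $\ov{\tz\partial_\tz}$ is injective with $\deg_\tz\det\ov{\tz\partial_\tz}=(k+1)/2$, one has $\dim_\CC\rH^1=(k+1)/2$, and these classes form a basis. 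When $k$ is even, $\rH^1$ is cyclic over $\CC[\tz]$ and, since $\ov{\tz\partial_\tz}$ has rank $k$ (over $\CC((\tt))$, hence over $\CC(\tz)$), one-dimensional after $\otimes_{\CC[\tz]}\CC(\tz)$; a cyclic module $\CC[\tz]/I$ with this property has $I=0$, so $\rH^1\cong\CC[\tz]$, free of rank one and generated by the class of $\symbasis_0$.

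The genuinely routine ingredient is the bookkeeping of the bidiagonal matrix and the non-vanishing of the various integer scalars; splitting $\ov{\tz\partial_\tz}$ into its two parity blocks (even-index basis vectors to odd-index ones, and conversely) makes the triangularity and the determinant transparent for anyone who prefers to avoid the $\CC((\tt))$ diagonalisation. The one delicate point is the even case, where $\ov{\tz\partial_\tz}$ fails to be injective: there one must simultaneously identify the rank-one kernel and rule out torsion in the cokernel, which is exactly what the explicit cocycle $w$ and the ``cyclic plus correct generic rank'' argument deliver.
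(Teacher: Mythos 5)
Your argument is correct and proves the lemma (in fact it gives bases, not just generators, in the odd case, as does the paper). The overall skeleton is the same as the paper's: view $\ov{\tz\partial_\tz}$ as a square matrix over the PID $\CC[\tz]$, use the nonvanishing determinant plus the count $\dim_\CC\rH^1=\deg_\tz\det\ov{\tz\partial_\tz}=\sfrac{(k+1)}{2}$ for $k$ odd, and reduce every class to powers of $\tz$ times $\symbasis_0$ — your explicit two-step recursion is exactly the triangularity that the paper packages as the isomorphism \eqref{eq:zpt_without_k}. Where you genuinely differ is in how the two computational inputs are produced. The paper computes $\det\ov{\tz\partial_\tz}=(k!!)^2(-\tz)^{(k+1)/2}$ directly in the basis $\{\symbasis_a\}$ and, for $k$ even, finds the kernel generator by the parity splitting \eqref{eq:splitting_olzpt} followed by an inspection of coefficients. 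You instead identify $(\gr G^\bast,\ov{\tz\partial_\tz})$ with $\Sym^k$ of the rank-two module $\CC[\tz]\basis_0\oplus\CC[\tz]\basis_1$ carrying the derivation $\basis_0\mto\basis_1$, $\basis_1\mto\tz\basis_0$, diagonalize after the base change $\tz=\tt^2$ to read off the eigenvalues $(2j-k)\tt$ — which yields at once the determinant (matching the paper's value) and, for $k$ even, the exact rank $k$ — and you exhibit the even-case kernel element conceptually as $(\basis_1^2-\tz\basis_0^2)^{k/2}$, whose expansion explains the binomial coefficients $c_i=(-1)^i\binom{k/2}{i}$ and whose unit $\symbasis_k$-coefficient shows it generates the (free, rank-one, saturated) kernel. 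Your treatment of the even-case cokernel (cyclic, generic rank one, hence free on the class of $\symbasis_0$) is a mild variant of the paper's deduction from $\det=0$ and \eqref{eq:zpt_without_k}; both are complete. In short, your structural route buys transparent eigenvalue and rank bookkeeping and an a priori origin for the kernel coefficients, at the small cost of the auxiliary extension $\CC[\tz]\subset\CC((\tt))$, whereas the paper stays in the original basis and leaves those two facts to direct matrix computation.
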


\begin{proof}
We shall determine the structure of the endomorphism $\ov{\tz\partial_\tz}$
on the finitely generated module $\gr G^\bast$
over the principal ideal domain $\CC[\tz]$. From formula \eqref{eq:tdteta} we see that $\tz\partial_\tz$ induces an isomorphism of $\CC[\tz]$-modules
\begin{equation}\label{eq:zpt_without_k}
\ov{\tz\partial_\tz}\colon \bigoplus_{i=0}^{k-1} \CC[\tz]\symbasis_i \to \gr G^\bast /\CC[\tz]\symbasis_0
\end{equation} and that, with respect to the basis $\{\symbasis_i\}$, the action $\ov{\tz\partial_\tz}$ has determinant $(k!!)^2(-\tz)^{(k+1)/2}$ if~$k$ is odd and zero otherwise.
If $k$ is odd, the space $\coH^1(\gr G^\bast , \ov{\tz\partial_\tz})$
has dimension $\psfrac{k+1}{2}$ and coincides with the image of $\CC[\tz]\symbasis_0$ through the isomorphism \eqref{eq:zpt_without_k}. Therefore, the elements~$\tz^j\symbasis_0$, for $0\leq j \leq \psfrac{k-1}{2}$, form a basis of $\coH^1(\gr G^\bast, \ol{\tz\partial_\tz})$.

If $k$ is even, then \eqref{eq:zpt_without_k} gives an isomorphism $\CC[\tz]\symbasis_0 \isom \coH^1(\gr G^\bast, \ol{\tz\partial_\tz})$.
On the other hand,
notice that the map $\ol{\tz\partial_\tz}$ splits as a direct sum
$\ol{\tz\partial_\tz}{}' \oplus \ol{\tz\partial_\tz}{}''$,
where
\begin{equation*}\label{eq:splitting_olzpt}
\ol{\tz\partial_\tz}{}'\colon \bigoplus_{j=0}^{k/2} \CC[\tz]\symbasis_{2j} \to \bigoplus_{j=1}^{k/2} \CC[\tz]\symbasis_{2j-1}, \quad \ol{\tz\partial_\tz}{}''\colon \bigoplus_{j=1}^{k/2} \CC[\tz]\symbasis_{2j-1} \to \bigoplus_{j=0}^{k/2} \CC[\tz]\symbasis_{2j}.
\end{equation*}
Moreover, $\ol{\tz\partial_\tz}{}'$ is surjective and $\ol{\tz\partial_\tz}{}''$ injective, and hence $\coH^0(\gr G^\bast, \ol{\tz\partial_\tz})$ is contained in the submodule $\bigoplus_{j=0}^{k/2} \CC[\tz]\symbasis_{2j}$. The statement then follows from an inspection of the coefficients~$a_i$ in the equation
$\ol{\tz\partial_\tz}'(\sum_{i=0}^{k/2}a_iu_{k-2i})=0$.
\end{proof}

To finish the proof of Proposition~\ref{prop:Coh_grM},
we use the spectral sequence
\[
E_1^{p,q} = \coH^p\Bigl(\gr_{q-p} G^\bast \To{\ov{\tz\partial_\tz}} \gr_{q-p+1} G^\bast \Bigr) \implique \coH^p(G^\bast , \tz\partial_\tz) \quad (p \in \{0,1\},\; q\geq 0)
\]
associated with the grading \eqref{eq:deg_G}, which degenerates at the $E_2$\nobreakdash-page. If $k$ is odd, all terms~$E_1^{0, q}$ vanish by the first part of Lemma \ref{Lemma:Coh_grG}, and the spectral sequence yields an isomorphism of vector spaces $\coH^1_\dR(\Gm, \Sym^k\Kl_2) \simeq \coH^1(\gr G^\bast, \ov{\tz\partial_\tz})$. The statement follows using Lemma \ref{Lemma:Coh_grG} again. If $k$ is even, then~$\coH^1_\dR(\Gm, \Sym^k\Kl_2)$ is isomorphic to the cokernel of the induced map
\begin{equation}\label{eqn:cokerH1evenk}
\tz\partial_\tz\colon \coH^0(\gr G^\bast, \ov{\tz\partial_\tz}) \to \coH^1(\gr G^\bast, \ov{\tz\partial_\tz}).
\end{equation}
For each $r\geq 0$, the equality
\[
\tz\partial_\tz\biggl(\tz^r \sum_{i=0}^{k/2} a_i \tz^i \symbasis_{k-2i} \biggr)= \sum_{i=0}^{k/2} (r+i)a_iz^{r+i}\symbasis_{k-2i} \equiv c_rz^{r+\sfrac{k}{2}}\symbasis_0
\]
holds in $\coH^1(\gr G^\bast , \ov{\tz\partial_\tz})$ for some $c_r \in \CC$. Therefore, the classes of $z^j u_0$, for $0 \leq j \leq \sfrac{k}{2}-1$, are linearly independent in the cokernel of \eqref{eqn:cokerH1evenk}. Since there are as many as the dimension of $\coH^1_\dR(\Gm, \Sym^k\Kl_2)$ by \eqref{eq:dimSymk}, they form a basis.
\end{proof}

\subsection{The Hodge filtration}
In this section, we prove Theorem~\ref{th:main}. In order to do so, we first establish the analogue of this result for $\Sym^k\wtKl_2$, which is stated as follows:

\begin{prop}\label{prop:main}
The mixed Hodge structure $\coH^1(\Gm, \Sym^k\wtKl_2)$ has weights $\geq k+1$ and the following numerical data:
\begin{enumerate}
\item\label{prop:mainodd}
For odd $k$, it is mixed of weights $k+1$ and $2k+2$, with
\[
\dim \coH^1(\Gm, \Sym^k\wtKl_2)^{p,q}=
\begin{cases}
1, & p+q = k+1,\ p \in \{1,\dots, k\}, \\
1, & p = q = k+1, \\
0, & \text{otherwise}.
\end{cases}
\]
\item\label{prop:maineven}
For even $k$, it is mixed of weights $k+1$, $k+2$ and $2k+\nobreak2$, with
\[
\dim \coH^1(\Gm, \Sym^k\wtKl_2)^{p,q}=
\begin{cases}
1, & p+q = k+1,\; p\in\{1,\dots, k\}\text{ and }p\neq k/2,\,k/2+1, \\
1, & p = q =k/2+1, \\
1, & p = q =k+1, \\
0, & \text{otherwise}.
\end{cases}
\]
\end{enumerate}
Furthermore, the mixed Hodge structure $\coH^1_{\rmid}(\Gm,\Sym^k\wtKl_2)$ is pure of weight $k+1$ and is equal to $W_{k+1}\coH^1_\dR(\Gm, \Sym^k\wtKl_2)$.
\end{prop}

\begin{proof}
The weight properties of $\coH^1(\Gm, \Sym^k\wtKl_2)$ and the purity of $\coH^1_{\rmid}(\Gm,\Sym^k\wtKl_2)$ were already obtained in the more general setting of Theorem \ref{th:weightsMHM}. To compute the Hodge numbers, we take up the argument in its proof for~\hbox{$n=1$}. Recall that the $\Cltaut$-module $\wtM$ is irreducible, generically of rank $k$, and underlies a pure Hodge module of weight~$k$
(Proposition~\ref{prop:HwtM} and Lemma \ref{lem:irreducibility}). Let us describe the Hodge filtration on $\wtM^\rH$. We start with the nearby cycles at infinity. Since the monodromy around infinity is maximally unipotent (Corollary~\ref{cor:wtM}), the non-zero graded pieces of the weight filtration on $\psi_{1/\taut}\wtM^\rH=\psi_{1/\taut, 1}\wtM^\rH$, which is the monodromy filtration associated with $\wtrN$ centered at $k-1$,
are the $\wtrN^\ell\rP_k$. They are hence of the form
$\gr_{2j}^W\psi_{1/\taut}\wtM^\rH$, for $0\leq j\leq k-1$, and one-dimensional. It follows that the mixed Hodge structure $\psi_{1/\taut}\wtM^\rH$ is of Hodge-Tate type and that
\begin{equation}\label{eq:psitauMtilde}
\gr^p_F\psi_{1/\taut}\wtM^\rH=\gr_{2p}^W\psi_{1/\taut}\wtM^\rH,\quad p=0,\dots,k-1,
\end{equation}
has dimension one. The compatibility property of \cite[3.2.1]{MSaito86} between the Hodge filtration and the Kashiwara-Malgrange filtration of the filtered $\cD$-modules underlying Hodge modules implies, in the case of smooth curves, the equality $\rk\gr^p_F\wtM^\rH=\dim\gr^p_F\psi_{1/\taut}\wtM^\rH$. Hence,~$\gr^p_F\wtM^\rH$ is generically a rank-one bundle for $p=0,\dots,k-1$.

Recall the equality $j_{0+}\Sym^k\wtKl_2=\FT\Pi(\wt M)$ from \eqref{eqn:PiofMtilde}. From Proposition \ref{prop:HwtM}, we derive an exact sequence of mixed Hodge structures
\[
0 \to \coH^1(\Afu_t,\FT \wtM^\rH) \to \coH^1(\Gm,\Sym^k\wtKl_2) \to \coH^1(\Afu_t,\FT \wtM'^\rH) \to 0
\]
by applying the functor of Notation~\ref{nota:FTMH}. Since $\wtM^{\prime\rH}$ is pure of weight~$2k+1$, Corollary~\ref{cor:weightssimple}\eqref{cor:weightssimple1} says that $\coH^1(\Afu_t,\FT \wtM'^\rH)$ is pure of weight $2k+2$. Besides, this space is one\nobreakdash-dimensional, since $\coH^1_\dR(\Afu_t,\FT \wtM)=\coH^1_\dR(\Afu_t,j_{0\dag+}\Sym^k\wtKl_2)$ has codimension one in~$\coH^1_\dR(\Gm,\Sym^k\wtKl_2)$ by the argument in the proof of Proposition~\ref{prop:dimH1mid}. This yields the lines $p=q=k+1$ in \eqref{prop:mainodd} and \eqref{prop:maineven}.

If $k$ is odd, then $0$ is not a singular point of $\wtM$, so Corollary \ref{cor:weightssimple}\eqref{cor:weightssimple2} applies. It follows that $\coH^1(\Afu_t,\FT \wtM^\rH)$ is pure of weight $k+1$ and its Hodge numbers are the ranks of $\gr^{p-1}_F\wtM^\rH$. Since $\gr^p_F\wtM^\rH$ has rank one for $p=0,\dots,k-1$ and is zero otherwise, this yields the rest of~\eqref{prop:mainodd}.

If $k$ is even, then $0$ is a singular point of $\wtM$ and, according to Corollary~\ref{cor:weightssimple}\eqref{cor:weightssimple3}, there is an isomorphism of mixed Hodge structures
\[
\coH^1(\Afu_t,\FT \wtM^\rH) \simeq \coker[\wt\rN\colon \psi_{\taut,1}\wtM^\rH\to\psi_{\taut,1}\wtM^\rH(-1)].
\]
Since~$\wtM$ is an intermediate extension at $\taut=0$ and $\dim\phi_{\tau,1}\wtM=1$ by Corollary \ref{cor:wtM}\eqref{cor:wtM2}, the vanishing $\wt\rN^2=0$ holds. Since $\wtM$ has generic rank $k$,
the primitive parts of the Lefschetz decomposition of $\gr^W\psi_{\taut,1}\wtM^\rH$ are thus
\begin{itemize}
\item
$\rP_1=\gr^W_k\psi_{\taut,1}\wtM^\rH$ of dimension one,
\item
$\rP_0=\gr^W_{k-1}\psi_{\taut,1}\wtM^\rH$ of dimension $k-2$,
\end{itemize}
and we get the equality
\[
\gr^W\coker\wt\rN=\rP_0(-1)\oplus\rP_1(-1).
\]
In particular,
$\gr_{k+2}^W\coH^1(\Afu_\tt,\FT\wtM^\rH)=\gr_{k+2}^W \coH^1(\Gm,\Sym^k\wtKl_2)$
corresponds to the summand $\rP_1(-1)$ and has dimension one, so is of Hodge type $(k/2+1,k/2+1)$, yielding the corresponding line in \eqref{prop:maineven}. We conclude the proof by using the equality
\[
\rk\gr^{p-1}_F\wtM^\rH=\dim\gr^{p-1}_F\psi_{\taut,1}\wtM^\rH=\dim\gr^p_F(\rP_1(-1))+\dim\gr^p_F(\rP_0(-1))+\dim\gr^{p+1}_F(\rP_1(-1)),
\]
which follows from the Hodge-Lefschetz decomposition on noting that $\gr^{p+1}_F\rP_1=\gr^p_F(\rN\rP_1)$. The leftmost term is one-dimensional for $p=1,\dots,k$ and zero otherwise. We already know that $\gr^p_F(\rP_1(-1))$ is one-dimensional for $p=k/2+1$ and zero otherwise. Hence $\gr^{p+1}_F(\rP_1(-1))$ is one-dimensional for $p=k/2$ and zero otherwise and we obtain $\dim\gr^p_F(\rP_0(-1))=1$ for the remaining values of $p$, yielding the first line in \eqref{prop:maineven}.
\end{proof}

We can now show that the bases of $\coH^1_\dR(\Gm, \Sym^k\wtKl_2)$ and $\coH^1_\dR(\Gm, \Sym^k \Kl_2)$ given in Proposition~\ref{prop:Coh_grM} are adapted to the Hodge filtration if $k$ is odd and that the first half of them are so if $k$ is even. This information will suffice to prove Theorem \ref{th:main} at the end of this section. (A full basis of $\coH^1_{\dR,\rmid}(\Gm, \Sym^k \Kl_2)$ adapted to the Hodge filtration is constructed in \hbox{\cite[Cor.\,3.28]{F-S-Y20b}} by exploiting the explicit calculation of the intersection pairing on this space.)
The proof will rely on the identification of the Hodge filtration on these spaces with their irregular Hodge filtration as de Rham fibers of exponential mixed Hodge structures (Theorem~\ref{th:EMHSMHStg}) and on toric techniques to compute the latter. In what follows, we still denote by $F^\bbullet$ the irregular Hodge filtration.

\begin{prop}\label{prop:FH1dR}
With respect to the bases from Proposition~\ref{prop:Coh_grM},
\begin{enumerate}
\item\label{prop:FH1dR1}
the Hodge filtration on $\coH^1_\dR(\Gm, \Sym^k\wtKl_2)$ is given by
\[
F^p\coH^1_\dR(\Gm, \Sym^k\wtKl_2) =
\Bigl\langle t^j\wtbasis_0^k\,\dfrac{\de t}{t}\Bigm| 0\leq j\leq k+1-p\Bigr\rangle
\]
if $k$ is odd, or if $k$ is even and $p>k/2$.
\item\label{prop:FH1dR2}
the Hodge filtration on $\coH^1_\dR(\Gm,\Sym^k\Kl_2)$ is given by
\[
F^p\coH^1_\dR(\Gm,\Sym^k\Kl_2) =
\Bigl\langle z^j\basis_0^k\,\frac{\de z}{z} \Bigm| 0\leq j \leq \Bigl\lfloor\frac{k+1-p}{2}\Bigr\rfloor\Bigr\rangle
\]
if $k$ is odd, or if $k$ is even and $p>k/2$.
\end{enumerate}
\end{prop}

\begin{proof}[Proof of the inclusion $\supset$]
The inclusions
\begin{align*}
\coH_\dR^1(\Gm, \Sym^k\Kl_2)&\hto\coH_\dR^1(\Gm, \Kl_2^{\otimes k})\simeq\coH_\dR^{k+1}(\Gm^{k+1}, \fqq{f_k}) \\
\coH_\dR^1(\Gm, \Sym^k\wtKl_2)&\hto\coH_\dR^1(\Gm, \wtKl_2^{\otimes k})\simeq\coH_\dR^{k+1}(\Gm^{k+1}, \fqq{\wtf_k})
\end{align*}
are strict with respect to the irregular Hodge filtration and map the basis elements $z^j\basis_0^k\,\sfrac{\de z}{z}$ of $\coH_\dR^1(\Gm, \Sym^k\Kl_2)$ and $t^j\wtbasis_0^k\sfrac{\de t}{t}$ of $\coH_\dR^1(\Gm, \Sym^k\wtKl_2)$ to
\[
w_j= z^j\,\frac{\de z}{z}\,\frac{\de x_1}{x_1} \cdots \frac{\de x_k}{x_k} \in \coH_\dR^{k+1}(\Gm^{k+1}, \fqq{f_k}) \text{ and }
\wt{w}_j=t^j\,\frac{\de t}{t}\frac{\de y_1}{y_1}\cdots \frac{\de y_k}{y_k}\in\coH_\dR^{k+1}(\Gm^{k+1}, \fqq{\wtf_k})
\]
respectively. It is therefore enough to prove that
\begin{enumeratei}
\item\label{enum:wj}
$w_j\in F^{k+1-2j}\coH_\dR^{k+1}(\Gm^{k+1}, \fqq{f_k})$ for $j\geq0$ if $k$ is odd and $0\leq 2j\leq k/2$ if $k$ is even,\item\label{enum:wtj}
$\wt w_j\in F^{k+1-j}\coH_\dR^{k+1}(\Gm^{k+1}, \fqq{\wtf_k})$ for $j\geq0$ if $k$ is odd and $0\leq j\leq k/2$ if $k$ is even.
\end{enumeratei}

\subsubsection{Proof of \eqref{enum:wj} and \eqref{enum:wtj} in the case when $k$ is odd}\label{subsubsec:wj}
We start with \eqref{enum:wj}. We identify the set of Laurent monomials in $z, x_i,\dots, x_k$
with the $\ZZ$-lattice $\ZZ^{k+1}$ in~$\RR^{k+1}$
by taking the exponents.
Let $\{\alpha_i\}_{i=0}^k$ be the dual basis
of the standard basis of~$\RR^{k+1}$.
Regardless of the parity of $k$,
the monomials appearing in $f_k=\sum_{j=1}^k x_j + \tz\sum_{j=1}^k\sfrac{1}{x_j}$ all lie in the affine hyperplane $h=1$
in~$\RR^{k+1}$ defined by the equation
$h=2\alpha_0+\sum_{i=1}^k \alpha_i$.
Thus the Newton polytope $\Delta\subset\RR^{k+1}$ of $f_k$ has only one facet that does not contain the origin; it lies on the hyperplane $h = 1$. The cone $\RR_{\geq 0}\Delta$ is given by the $2^k$ inequalities
\[
\alpha_0 + \sum_{i=1}^k \epsilon_i \alpha_i \geq 0,
\quad \epsilon_i \in \{0,1\}.
\]
It is straightforward to check that $f_k$ is non-degenerate
with respect to $\Delta$ if and only if $k$ is odd.
In this case, the irregular Hodge filtration on $\coH_\dR^{k+1}(\Gm^{k+1}, \fqq{f_k})$ arises from the Newton filtration on monomials $\RR_{\geq 0}\Delta$ by \cite[Th.\,1.4]{A-S97} and \cite[Th.\,4.6]{Yu12}. In particular, if $m \in \RR_{\geq 0}\Delta$ is a monomial with Newton degree $h(m)$ such that the top form
$\omega = m\frac{\de z}{z}\frac{\de x_1}{x_1}\cdots \frac{\de x_k}{x_k}$
represents a non-trivial class in $\coH_\dR^{k+1}(\Gm^{k+1}, \fqq{f_k})$,
then
\begin{equation}\label{eqn:representatives}
\omega \in F^p\coH_\dR^{k+1}(\Gm^{k+1}, \fqq{f_k}) \quad \text{if}\quad p\leq k+1-h(m).
\end{equation} In the case at hand, $z^j \in \RR_{\geq 0}\Delta$ has degree $h(z^j) = 2j$, hence the assertion.

For \eqref{enum:wtj},
we consider the function
$\wt h = \alpha_0$ on the cone
generated by the Newton polytope~$\wt{\Delta}$ of the Laurent polynomial $\wtf_k$.
If $k$ is odd, then $\wtf_k$ is non-degenerate. Moreover,
given $m \in \RR_{\geq 0}\wt{\Delta}$
such that the class $\wt\omega$ of $m\frac{\de t}{t}\frac{\de y_1}{y_1} \cdots \frac{\de y_k}{y_k}$ is non-trivial,
$\wt\omega$ belongs to $F^p\coH_\dR^{k+1}(\Gm^{k+1}, \fqq{\wtf_k})$ if $p\leq k+1-\wt h(m)$ holds. The result then follows from the equality $\wt h(t^j)=j$. \qed

\subsubsection{A toric compactification}\label{sec:toric-compactification}
Before proving \eqref{enum:wj} and \eqref{enum:wtj} for even $k$, we describe an explicit compactification of $(\Gm^{k+1}, \wtf_k)$ that will allow us to understand the Hodge filtration on the cohomology of $\fqq{\wtf_k}$.
Since the construction is also used in Section \ref{Sect:etale-real} to study the \'etale realizations of the motive $\Motive_k$, we take the base field to be $\QQ$ before dealing with Hodge filtrations in the second half of this subsection. Let $(U,f)$ be a pair consisting of a smooth quasi-projective variety $U$ and a regular function $f\colon U \to \Afu$.
After Mochizuki \cite[Def.\,2.6]{Mochizuki15b},
we call a smooth compactification $X$ of $U$ \textit{non-degenerate} along $D$ if the boundary $D= X \setminus U$ is a strict normal crossing divisor and $f$ extends to a rational morphism
\begin{displaymath}
f\colon X \dashrightarrow \PP^1
\end{displaymath}
such that, locally for the analytic topology around each point of $X$, there is a coordinate system
$\{ \xi_1,\dots, \xi_a, \eta_1, \dots, \eta_b, \zeta_1,\dots, \zeta_c \}$ and a multi-index $e \in \ZZ_{>0}^r$ satisfying
\begin{equation*}\label{eq:local_chart}
D = (\xi\eta), \quad \text{$f = \sfrac{1}{\xi^e}$ or $\sfrac{\zeta_1}{\xi^e}$}.
\end{equation*}

Recall the equality $\wtf_k=tg^{\boxplus k}=t\sum_{i=1}^k(y_i+1/y_i)$ from \eqref{eqn:relationfandg} and the isomorphism of Proposition~\ref{prop:HdRSymKln} with $n=1$. We first compactify $(\Gm^k,g^{\boxplus k})$.
For this, let $M=\bigoplus_{i=1}^k \ZZ y_i$ be the lattice of Laurent monomials on $\Gm^k$ and let~\hbox{$N=\bigoplus_{i=1}^k \ZZ e_i$} be the dual lattice with basis~$e_i$ dual to~$y_i$. We consider the toric compactification~$X$ of $\Gm^k$ attached to the simplicial fan $F$ in $N_\RR =\bigoplus_{i=1}^k \RR e_i$ generated by the $3^k-1$ rays
\begin{equation}\label{eq:fan}
\RR_{\geq 0}\cdot\sum_{i=1}^k \epsilon_ie_i \quad \text{with $\epsilon_i \in \{ 0,\pm 1\}$ and $(\epsilon_1,\dots, \epsilon_k) \neq 0$}.
\end{equation}
There are $2^kk!$ simplicial cones of maximal dimension $k$ in $F$, each of which provides an affine chart of $X$ isomorphic to $\IA^k$ on which the function $g^{\boxplus k}$ has the same structure. Explicitly as an example, consider the maximal cone of $F$ generated by the $k$ vectors
\[
\sum_{i=1}^r e_i, \quad 1\leq r\leq k.
\]
The affine ring associated with the dual cone in $M_\RR$ is the polynomial ring $\QQ[z_i]_{i=1}^k$ where
\[
z_r = \sfrac{y_r}{y_{r+1}}, \quad 1\leq r <k, \quad \text{and} \quad z_k = y_k.
\]
On this chart $X_1=\Spec(\QQ[z_i]_{i=1}^k) \cong \IA^k$, the equality $g^{\boxplus k}=\sfrac{g_1}{z_1\cdots z_k}$ holds with
\begin{equation}\label{eq:g1_on_Z1}
g_1=1+\sum_{r=2}^k z_1\cdots z_{r-1} + z_1\cdots z_k \sum_{r=1}^k z_r\cdots z_k \in \Gamma(X_1,\cO_{X_1}).
\end{equation}
The toric variety $X$ provides an example of a non-degenerate compactification of $(\Gm^k, g^{\boxplus k})$
as in a neighborhood of $X\setminus\Gm^k$,
the closure of the zero locus of $g^{\boxplus k}$ and $X\setminus\Gm^k$
form a strict normal crossing divisor (see also the paragraph before \S\ref{sect:l-adic_k_odd}).

We now construct a non-degenerate compactification of $(\Gm^{k+1}, tg^{\boxplus k}) \cong (\Gm^{k+1}, \wtf_k)$ starting from $\PP^1_\tt\times X$. For this, we order the $3^k-1$ irreducible components $(S_i)_{1\leq i\leq3^k-1}$ of $X\setminus\Gm^k$ corresponding to the rays \eqref{eq:fan} and consider the tower $\ol{X} \to \cdots \to \PP^1_\tt \times X$ of $3^k-1$ blow\nobreakdash-ups along the intersection of the proper transform of $0\times X$
(on which $tg^{\boxplus k}$ has a simple zero)
and the proper transform of $\PP^1_\tt\times S_i$
(on which $tg^{\boxplus k}$ has a simple pole). Together with the function induced from the blow-up maps, the resulting variety is a non\nobreakdash-degenerate compactification of~$(\Gm^{k+1}, \wtf_k)$ if $k$ is odd. If $k$ is even, then $\ol{X}$ is non\nobreakdash-degenerate away from the $\binom{k}{k/2}$ points
\[
(t, y_i)=(\infty, \epsilon_i), \quad \epsilon_i \in \{\pm 1\} \quad \sum_{i=1}^k \epsilon_i=0
\]
(note that they are defined over $\QQ$). Let $x$ be such a point. For a suitable choice of (analytic or étale) local coordinates $z_1, \dots, z_k$ of $X$ around $x$, the function $g^{\boxplus k}$ takes the form $z_1^2 +\cdots + z_k^2$, which means that $x$ is an ordinary quadratic point of $g^{\boxplus k}=0$. We perform two blow\nobreakdash-ups on~$\ol{X}$: first at each $x$ and then along the intersection of the exceptional divisor and the proper transform of $\infty\times X$. Let $\wt{X}$ be the resulting variety and $E_1$ and $E_2$ the exceptional divisors from the first and the second step respectively. A direct computation reveals that $\wt{X}$ is a non-degenerate compactification of the pair $(\Gm^{k+1}, \wtf_k)$
with $\ord_{E_1} \wtf_k=1$ and $\ord_{E_2} \wtf_k=0$.

\subsubsection{Proof of \eqref{enum:wj} and \eqref{enum:wtj} in the case when $k$ is even}

We now start with \eqref{enum:wtj}. Let $\wt{X}$ be the non-degenerate compactification of $(\Gm^{k+1}, \wtf_k)$ constructed above and $D=\wt{X} \setminus \Gm^{k+1}$. Since the indeterminacy locus of the rational map \hbox{$\wtf_k \colon \wt{X} \dasharrow \PP^1$} has codimension at least two in $\wt{X}$, one can define the pole divisor $P$ of $\wtf_k $ as the closure of the pole divisor of a representative of~$\wtf_k$, and similarly for the zero divisor. The exceptional divisors $E_1$ and $E_2$ are not contained in the support of $P$, and a direct computation shows that the form $\wt{w}_j$ lies in
\begin{equation*}\label{eq:tilde_w_odp}
\Gamma\Bigl(\wt{X}, \Omega^{k+1}_{\wt{X}}(\log D) (jP-(k-j)E_1-(k-2j)E_2) \Bigr).
\end{equation*}
Accordingly, if the inequalities $0\leq j\leq k/2$ hold (so that $\wt{w}_j$ is holomorphic generically on the divisors $E_1$ and~$E_2$), the form $\wt{w}_j$ lies in
\[
\Gamma \Bigl(\wt{X}, \Omega^{k+1}_{\wt{X}}(\log D)(jP) \Bigr).
\]
In this case, we claim that there is a natural map
\[
\Gamma \Bigl(\wt{X}, \Omega^{k+1}_{\wt{X}}(\log D)(jP) \Bigr) \to F^{k+1-j} \coH_\dR^{k+1}(\Gm^{k+1}, \fqq{\wtf_k}),
\]
from which the statement will follow. Indeed, as described in
\cite[\S 4($b$)]{Yu12} (especially in the paragraph containing diagram (26)),
one can resolve the indeterminacies of $\wt{f}_k$ by taking a tower of blow-ups $\pi\colon\wt{X}' \to \wt{X}$ of $\wt{X}$
along the intersections of the zero divisor
and the irreducible components of the pole divisor of the transforms of~$\wt{f}_k$ such that $D' = \wt{X}'\setminus \Gm^{k+1}$
remains a simple normal crossing divisor
and $\wt{f}_k$ extends to an everywhere defined morphism $\wt{f}_k'\colon \wt{X}'\to \PP^1$.
Let $P'$ be the pole divisor of $\wt{f}_k'$.
By \cite[Prop.\,4.4]{Yu12}, the following equality holds:
\[
\Rder\Gamma \Bigl(\wt{X}, \big(\Omega^\bbullet_{\wt{X}}(\log D)((\bbullet-p)P)_+,\de +\de\wt{f}_k\big) \Bigr)
= \Rder\Gamma \Bigl(\wt{X}', \big(\Omega^\bbullet_{\wt{X}'}(\log D')((\bbullet-p)P')_+,\de +\de\wt{f}_k'\big) \Bigr),
\]
where we use the notation
\[
\Omega^i_{\wt{X}}(\log D)((i-p)P)_+ = \begin{cases}
\Omega^i_{\wt{X}}(\log D)((i-p)P), & i\geq p, \\
0, & i<p,
\end{cases}
\]
and similarly for the other complex.
(In \cite[Prop.\,4.4]{Yu12}, the complex on the left-hand side is denoted by $F^p_{\mathrm{NP}}(\nabla)$ and that on the right-hand side by $F^p(\nabla)$, and Prop.\,4.4 of \loccit\ implies that $F^p_{\mathrm{NP}}(\nabla)$ and $\Rder\pi_\ast F^p(\nabla)$ are quasi-isomorphic; see also the proof of Th.\,4.6 in \loccit
Note that the assumption that the Laurent polynomial is non-degenerate is not needed for Prop.\,4.4 of \loccit)
On the other hand, by the $E_1$-degeneration of the irregular Hodge filtration proved in \cite[Th.\,1.2.2]{E-S-Y13}, where $F^p(\nabla)$ is denoted by $F^{\mathrm{Yu}, p}_0(\Omega_{\wt{X}'}(\ast D'), \nabla)$ instead,
the equality
\[
\mathbb{H}^{k+1}\Bigl(\wt{X}', \big(\Omega^\bbullet(\log D')((\cbbullet-p)P')_+,\de +\de\wt{f}_k'\big) \Bigr)
= F^p \coH_\dR^{k+1}(\Gm^{k+1}, \fqq{\wtf_k})
\]
holds. This completes the claim, and hence the proof of \eqref{enum:wtj}.

To prove \eqref{enum:wj}, we observe that the equality
\begin{equation}\label{eqn:notildeinvariantpart}
\coH^1_\dR(\Gm, \Sym^k\Kl_2)=\coH^1_\dR(\Gm, \Sym^k\wtKl_2)^{\mu_2}
\end{equation}
is compatible with the Hodge filtration, so that we can check whether a form belongs to some step of the Hodge filtration by pulling it back by the double cover $[2]$ given by \hbox{$t \mapsto z=t^2$}. Since the pullback $\ramiftwo^*z^j v_0^k\, \sfrac{\de z}{z} = 2t^{2j}\wtbasis_0^k\sfrac{\de t}{t}$ maps to $2\wt{w}_{2j}$, it lies in $F^{k+1-2j}\coH_\dR^1(\Gm, \Sym^k\wtKl_2)$ for all $0\leq 2j \leq k/2$ by part \eqref{enum:wtj}. We thus get
\[
z^j v_0^k\, \frac{\de z}{z} \in F^{k+1-2j}\coH_\dR^1(\Gm, \Sym^k\Kl_2) \quad \text{if $0\leq 2j \leq k/2$},
\]
which ends the proof of the inclusion $\supset$ in Proposition \ref{prop:FH1dR}.
\end{proof}

\begin{proof}[Proof of the equality in Proposition \ref{prop:FH1dR}\eqref{prop:FH1dR1}]
Since $t^j\wt v_0^k\sfrac{\rd t}t$ form a basis of $\coH^1_\dR(\Gm, \Sym^k\wtKl_2)$ and the graded pieces of the Hodge filtration on this space are one-dimensional by Proposition~\ref{prop:main}, the inclusion $\supset$ in Proposition \ref{prop:FH1dR}\eqref{prop:FH1dR1} is necessarily an equality.
\end{proof}

\begin{proof}[Proof of the equality in Proposition \ref{prop:FH1dR}\eqref{prop:FH1dR2} and of Theorem \ref{th:main}]
Thanks to Proposition~\ref{prop:main}, the mixed Hodge structure $\coH^1(\Gm, \Sym^k\wtKl_2)$ has weights $k+1$, $k+2$ (for even $k$) and $2k+2$, and the graded piece $\gr^p_F\coH_\dR^1(\Gm, \Sym^k\wtKl_2)$ is one-dimensional for $p=1,\dots, k+1$ (except for $p=k/2$ if $k$ is even) and zero otherwise. Since the identification \eqref{eqn:notildeinvariantpart} is compatible with the weight and the Hodge filtrations, the possible weights of $\coH^1(\Gm, \Sym^k\Kl_2)$ are $k+1$, $k+2$ (for even~$k$) and $2k+2$, with graded pieces of dimension at most one in the last two cases, and all Hodge numbers are zero or one, depending on whether $\mu_2$ acts as multiplication by~$-1$ or by~$+1$ on $\gr^p_F\coH_\dR^1(\Gm, \Sym^k\wtKl_2)$.

By Proposition \ref{prop:FH1dR}\eqref{prop:FH1dR1}, the class of $z^j v_0^k\, \sfrac{\de z}{z}$ in $\gr_F^{k+1-2j}\coH_\dR^1(\Gm, \Sym^k\Kl_2)$ is non-zero for all $j$ satisfying $0\leq 2j\leq k$ if $k$ is odd and $0\leq 2j\leq k/2$ if $k$ is even, since its pullback to~$\gr_F^{k+1-2j}\coH_\dR^1(\Gm, \Sym^k\wtKl_2)$ is non-zero.

If $k$ is odd, the non-vanishing of $\gr_F^{k+1-2j}\coH_\dR^1(\Gm, \Sym^k\Kl_2)$ for $j$ satisfying $0\leq 2j\leq k$ implies that this space is one-dimensional, the class of $z^j v_0^k\, \sfrac{\de z}{z}$ being a basis. Since these classes form a basis of $\coH_\dR^1(\Gm, \Sym^k\Kl_2)$, all other $\gr_F^p\coH_\dR^1(\Gm, \Sym^k\Kl_2)$ vanish. This concludes the proof of both Theorem \ref{th:main} and Proposition \ref{prop:FH1dR}\eqref{prop:FH1dR2} for odd $k$.

If $k$ is even, the same argument shows that $\gr_F^p\coH_\dR^1(\Gm, \Sym^k\Kl_2)$ is one-dimensional for $p=k+1,k-1,\dots, 2\lceil k/4\rceil+1$. Since $\gr_F^{k+1}\coH_\dR^1(\Gm, \Sym^k\wt\Kl_2)$ lies in weight $2k+2$ by Proposition~\ref{prop:main}\eqref{prop:maineven}, so does $\gr_F^{k+1}\coH_\dR^1(\Gm, \Sym^k\Kl_2)$ and the corresponding graded piece is one-dimensional. This yields the line $p=q=k+1$ in Theorem \ref{th:main}\eqref{th:main:itemeven}. On the other hand, since $k+1$ is odd, the space $\gr_{k+1}^W\coH_\dR^1(\Gm, \Sym^k\Kl_2)$ is even-dimensional by Hodge symmetry. Since $\coH_\dR^1(\Gm, \Sym^k\Kl_2)$ has dimension $\sfrac{k}{2}$ by \eqref{eq:dimSymk} and $\gr_{k+2}^W\coH_\dR^1(\Gm, \Sym^k\Kl_2)$ has dimension at most one, we get the equality\enlargethispage{-2\baselineskip}%
\[
\dim\gr_{k+2}^W\coH_\dR^1(\Gm, \Sym^k\Kl_2)=\begin{cases} 0 & k\not\equiv0\bmod4, \\ 1 & k\equiv0\bmod4.\end{cases}
\]

If $k\not\equiv0\bmod4$ (so that $2\lceil k/4\rceil+1=k/2+2$), the spaces $\gr_F^p\coH_\dR^1(\Gm, \Sym^k\Kl_2)$ lie in weight $k+1$ for $p=k-1,\dots, k/2+2$, and hence $\gr_F^p\coH_\dR^1(\Gm, \Sym^k\Kl_2)$ is one-dimensional for $p=2, 4, \dots, k/2-1$ by Hodge symmetry.

If $k\equiv0\bmod4$ (so that $2\lceil k/4\rceil+1=k/2+1$), then the space $\gr_{k+2}^W\coH_\dR^1(\Gm, \Sym^k\Kl_2)$ is one-dimensional, and hence $\gr_F^{k/2+1}\coH_\dR^1(\Gm, \Sym^k\Kl_2)$ lies in weight $k+2$. This gives the line $p=q=k/2+1$ in Theorem \ref{th:main}\eqref{th:main:itemeven}. To get the remaining Hodge numbers, we argue as above: the spaces $\gr_F^p\coH_\dR^1(\Gm, \Sym^k\Kl_2)$ lie in weight $k+1$ for $p=k-1,\dots, k/2+3$, and hence $\gr_F^p\coH_\dR^1(\Gm, \Sym^k\Kl_2)$ is one-dimensional for $p=2, 4, \dots, k/2-2$ by Hodge symmetry. This completes the proof of both Theorem \ref{th:main}\eqref{th:main:itemeven} and Proposition \ref{prop:FH1dR}\eqref{prop:FH1dR2} for even $k$.
\end{proof}

\section{\texorpdfstring{$L$}{L}-functions}\label{sec:L-functions}

In this section, we compute the $L$-function of the pure motive $\Motive_k$ over~$\QQ$ defined in~\eqref{eq:Mk}. We first compare, in Theorems~\ref{Thm:etale_k_odd} and~\ref{Thm:etale_k_even}, the traces of Frobenius at unramified primes of its~$\ell$\nobreakdash-adic realization with symmetric power moments of Kloosterman sums. These results largely overlap with Yun's \cite[Th.\,1.1.6]{Yun15}. Up to semi-simplification, the two approaches yield the same Galois representations as realizations of two different geometric models. In some sense, we have replaced the use of affine Grassmannians and homogeneous Fourier transformation in \cite{Yun15} with that of exponential mixed Hodge structures and the irregular Hodge filtration to obtain the easier geometric model~$\KM$ in terms of which the motive is defined (compare with \cite[\S 4.1.6]{Yun15}). One advantage of this point of view is that it enables us to determine the structure of the Galois representations at ramified primes
by means of the Picard-Lefschetz formula. In addition, we show in Proposition~\ref{Prop:p-adic_realization} that the Galois representations are crystalline at $p>k$ when $k$ is odd (\resp $p >\sfrac{k}{2}$ when $k$ is even) and we obtain lower bounds for the $p$-adic valuation of the eigenvalues of Frobenius in Corollary~\ref{cor:Newton-Hodge}. We then compute the gamma factor in Corollary \ref{Cor:Gamma-factor}.
Using the Patrikis-Taylor theorem, we finally prove that the motives~\eqref{eq:Mk} are potentially automorphic in the last subsection.
Theorems \ref{thm:intro-k-odd} and \ref{thm:intro-k-even} from the introduction follow by pulling everything together.

\subsection{\'Etale realizations}\label{Sect:etale-real}

\subsubsection{Cohomology of \texorpdfstring{$\Sym^k\Kl_2$}{SymkKl2} over finite fields}\label{sec:properties-characteristic-polynomial}

Recall the $\ell$-adic Kloosterman sheaf $\Kl_2$ on~$\Gm$ over $\FF_p$ from Section \ref{sect:etale_char_p}. In this paragraph, we gather the main properties of the \'etale cohomology of its symmetric powers. All results below are due to Fu-Wan \hbox{\cite[Th.\,0.2]{FW05}} and Yun \cite[Lem.\,4.2.1, Cor.\,4.2.3 and 4.3.5]{Yun15}, who prove them by means of a \hbox{thorough} study of the structure of $\Sym^k\Kl_2$ at zero and infinity. Throughout, $F_p$ denotes the geometric Frobenius in~$\Gal(\ol\FF_p/\FF_p)$ and we consider the reciprocal characteristic polynomials
\begin{align*}
Z_k(p; T) &= \det\Bigl( 1-F_pT \mid
	\coH_{\et,\cp}^1(\Gmolf, \Sym^k\Kl_2) \Bigr) \\
	M_k(p; T) &= \det \Bigl( 1-F_pT \mid \coH^1_{\et, \rmid}(\Gmolf, \Sym^k \Kl_2)\Bigr).
\end{align*}
\begin{itemize}
\item
If $k$ is odd,
then
\begin{equation}\label{eq:degZ_kodd}
\deg Z_k(p; T) = \begin{cases}
\dfrac{k+1}{2}, & p =2 \\[10pt]
\dfrac{k+1}{2} - \flr{\dfrac{k}{2p} + \dfrac{1}{2}}, & p\geq 3,
\end{cases}
\end{equation}
and there is a factorization
\begin{equation}\label{eq:decomp_Z_odd}
Z_k(p; T) = (1-T)M_k(p; T),
\end{equation}
where the reciprocal roots of $M_k(p; T)$ are Weil numbers of weight $k+1$.

\smallskip

\item
If $k$ is even and $p$ is odd, then
\[
\deg Z_k(p; T)= \frac{k}{2} - \flr{\frac{k}{2p}}
\]
and there is a factorization
\begin{equation}\label{eq:decomp_Z_even}
Z_k(p; T) = (1-T)R_k(p; T)M_k(p; T)
\end{equation} such that
the reciprocal roots of $M_k(p; T)$ are again Weil numbers of weight $k+1$. Above, the polynomial $R_k(p; T)$ is given by
\[
R_k(p; T) = \bigl( 1- (-1)^{(p-1)/2}p^{k/2}T \bigr)^{n_k(p)}
	\bigl( 1- p^{k/2}T \bigr)^{m_k(p)-n_k(p)},
\]
\[
n_k(p) = \flr{\frac{k}{4p} + \frac{1}{2}},
\quad
m_k(p) = \flr{\frac{k}{2p}} + \delta_{4\ZZ}(k).
\]

\smallskip

\item There is also an explicit description for even $k$ and $p=2$ in \cite[Lem.\,4.3.4, Cor.\,4.3.5]{Yun15},
namely $Z_k(2; T)$ has degree $\flr{\psfrac{k+2}{4}}$ and factors as
\begin{equation}\label{eq:Z_k_2_T}
Z_k(2; T) =
(1-T) \bigl( 1- 2^{k/2}T \bigr)^{a_k} \bigl( 1+2^{k/2}T \bigr)^{b_k}
M_k(2; T),
\end{equation}
where
$\deg M_k = 2\flr{\psfrac{k+2}{12}} - 2\delta_{12\ZZ}(k)$
and the exponents $a_k$ and $b_k$ are given by
\begin{align*}
a_k&=\begin{cases}
\flr{\frac{k}{24}}+1, & k \equiv 0, 8, 12, 16, 18, 20 \mod{24} \\
\flr{\frac{k}{24}}, & k \equiv 2, 4, 6, 10, 14, 22 \mod{24} \\
\end{cases} \\
b_k&=\begin{cases}
\flr{\frac{k}{24}}+1, & k \equiv 6, 12, 14, 18, 20, 22 \mod{24} \\
\flr{\frac{k}{24}}, & k \equiv 0, 2, 4, 8, 10, 16 \mod{24}. \\
\end{cases}
\end{align*}
\end{itemize}
In all three cases,
the reciprocal roots $\alpha$ of the polynomial $M_k(p; T)$
are stable under the transformation $\alpha \mapsto p^{\psfrac{k+1}{2}}\alpha^{-1}$,
which reflects the self-duality of the middle cohomology.

\subsubsection{Galois representations of symmetric power moments}

Recall from \eqref{eq:g_boxplus_k} the Laurent polynomial $g^{\boxplus k} =\sum_{i=1}^k (y_i + \sfrac{1}{y_i})$ on the torus $\Gm^k$ and its zero locus \hbox{$\KM\subset\Gm^k$}. For each prime number $\ell$, the $\ell$-adic realization
of the motive~$\Motive_k$ is the $\QQ_\ell$-vector~space
\begin{equation}\label{eq:defGaloisreps}
V_{k,\ell} = \gr^W_{k-1}\coH_{\et,\cp}^{k-1}(\KM_{\ol\QQ}, \QQ_\ell)^{
	\symgp_k\times\mu_2,\,\chi}(-1)
\end{equation} equipped with the continuous representation
\begin{equation*}
r_{k, \ell} \colon \Gal(\ol{\QQ} / \QQ) \longrightarrow \GL(V_{k, \ell}).
\end{equation*} Writing $k=2m+1$ for odd $k$ and $k=2m+2$ or $k=2m+4$ with $m$ an even integer for even~$k$,
the vector space $V_{k,\ell}$ is $m$-dimensional by \eqref{eqn:dimensionHmid}.

The goal of the next two sections is to compare the traces of Frobenius at unramified primes with symmetric power moments of Kloosterman sums. For this, we shall consider the toric compactification $X$ of $\Gm^k$ introduced in Section \ref{sec:toric-compactification} and let $\ol{\KM}$ be the closure of $\KM$ in~$X$. We also regard these varieties as defined over general rings (\eg over $\FF_p, \ZZ_p$, etc.). We claim that~$\ol{\KM}$ is smooth along the strict normal crossing divisor $D=X\moins\Gm^k$ and that each irreducible component of $D$ intersects $\ol{\KM}$ in a smooth divisor in such a way that~$\ol{\KM}\setminus\KM$ forms a relative strict normal crossing divisor over~$\ZZ$. Indeed, it is enough to check these properties on each of the $2^kk!$ affine charts of $X$ corresponding to the cones of maximal dimension of the simplicial fan~$F$. For example, on the chart $X_1 \cong \IA^k = \Spec(\ZZ[z_i]_{i=1}^k)$, the function~$g^{\boxplus k}$ is given by \hbox{$g^{\boxplus k} = \sfrac{g_1}{z_1\cdots z_k}$} and the equality $\ol{\KM}\cap X_1 = (g_1)$ holds, with $g_1$ as in \eqref{eq:g1_on_Z1}. One then checks the equalities
\begin{align*}
(g_1)\cap(z_1)&=\emptyset, \quad (g_1)\cap(z_r)=\bigl(1+z_1(1+z_2+\cdots+z_2\cdots z_{r-1})\bigr) \\ &(\partial g_1/\partial z_1)\cap(z_r)=\bigl(1+z_2+\cdots+z_2\cdots z_{r-1}\bigr)
\end{align*} for $r=2,\dots,k$. From the first two, it follows that $\partial g_1/\partial z_1$ does not vanish on~$(z_r)$ for~$r\geq2$, hence the smoothness of~$\ol{\KM}$ along $(z_1\cdots z_r)$. The smoothness of $(g_1)\cap(z_r)$ is also clear, and~$\ol{\KM}\cap(z_{r_1})\cap\cdots\cap(z_{r_i})$ is smooth for any sequence $2\leq r_1\leq\cdots\leq r_i\leq k$, which implies the strict normal crossing property. Besides, over $\QQ$, the variety $\KM\subset\Gm^k$ is smooth when $k$ is odd, while if $k$ is even, its singular locus consists of $\binom{k}{k/2}$ ordinary quadratic points with coordinates $y_i \in\{\pm 1\}$ satisfying $\sum_{i=1}^k y_i= 0$.

\subsubsection{The $\ell$-adic case for odd symmetric powers}
\label{sect:l-adic_k_odd}

Let $k \geq 1$ be an odd integer and $p$ an odd prime number.
The singular locus $\Sigma$ of $\KM$ over $\ol\FF_p$
consists of $\flr{\sfrac{k}{2p}+\sfrac{1}{2}}$
orbits of ordinary quadratic points
under the action of $\symgp_k\times\mu_2$.
Indeed, the orbits are indexed by
odd positive integers $a$ such that $ap\leq k$, each of them being represented by the point with coordinates $y_i = 1$ (\resp $-1$)
for $1\leq i \leq \psfrac{ap+k}{2}$ (\resp $i>\psfrac{ap+k}{2}$).
Locally around this point, writing $y_i = z_i+1$ (\resp $y_i=z_i-1$), the defining equation of $\KM$ in $\ZZ_p[\![z_1,\dots,z_k]\!]$ is given by
\begin{displaymath}
g^{\boxplus k}(z_1, \dots, z_k)=2ap + Q_{ap}+\text{higher order terms},
\end{displaymath}
where $Q_{ap}$ is the non-degenerate quadratic form
\begin{equation}\label{eq:KM_char_p}
Q_{ap}= \sum_{i\leq (ap+k)/2} z_i^2 - \sum_{i>(ap+k)/2} z_i^2.
\end{equation}

Write $k=2m+1$. After choosing a place of $\ol\QQ$ above $p$,
with each $x \in \Sigma$ is associated a \textit{vanishing cycle class} $\delta_x$
in $\coH_{\et}^{k-1}(\ol\KM_{\ol\QQ}, \QQ_\ell)(m)$ that is
well defined up to sign. Letting $\gen{\,,\,}$ denote the pairing obtained from the intersection form and the identification
$\coH_\et^{2k-2}(\ol\KM_{\ol\QQ},\QQ_\ell)(2m) \cong \QQ_\ell$
given by the trace, these classes satisfy
\[
\gen{\delta_x,\delta_y} = \begin{cases}
(-1)^m2 & \text{if } x=y, \\
0 & \text{if }x\neq y.
\end{cases}
\]
By the Picard-Lefschetz formula
\cite[Exp.\,XV, Th.\,3.4]{SGA7}, there is an exact sequence
\begin{displaymath}
0 \longrightarrow \coH_\et^{k-1}(\ol\KM_{\ol\FF_p},\QQ_\ell)
\longrightarrow \coH_\et^{k-1}(\ol\KM_{\ol\QQ},\QQ_\ell)
\stackrel{\gamma}{\longrightarrow} \bigoplus_{x\in\Sigma} \QQ_\ell(-m) \longrightarrow 0,
\end{displaymath}
where the map $\gamma$ is given by taking pairings with $\delta_x$.

In what follows, we keep the notation $\zeta$ for a primitive $p$-th root of unity in $\overline{\QQ}_\ell$,
denote by $-[\zeta]$ the scalar extension $-\otimes_{\QQ_\ell}\QQ_\ell(\zeta)$,
and set
\begin{align*}
\Theta_p^+ &= \{ \text{$a \geq 1$ odd integer}
	\mid \text{$ap \leq k$ with $v_p(a)$ odd} \}, \\
\Theta_p^- &= \{ \text{$a \geq 1$ odd integer} \mid
	\text{$ap \leq k$ with $v_p(a)$ even} \},
\end{align*}
so that the following equality holds:
\begin{equation}\label{eq:sum+and-}
|\Theta_p^+| + |\Theta_p^-| = \flr{\frac{k}{2p} + \frac{1}{2}}.
\end{equation}

\begin{thm}\label{Thm:etale_k_odd}
Let $k=2m+1$ be a positive odd integer, and let $p$ and $\ell$ be distinct prime numbers. Let $V_{k,\ell}$ denote the $\ell$-adic realization
of the motive $\Motive_k$, which is an~$m$\nobreakdash-dimensional $\QQ_\ell$-representation of $\Gal(\ol \QQ/\QQ)$. Fix a place of $\ol\QQ$ above $p$
and let~$I_p$ be the corresponding inertia subgroup of $\Gal(\ol\QQ_p/\QQ_p) \subset \Gal(\ol \QQ/\QQ)$.

\begin{enumerate}
\item\label{thm:structure1} The representation $V_{k,\ell}$ is unramified at $2$ and the $\Gal(\ol{\FF}_2/\FF_2)$-module $V_{k,\ell}[\zeta]$
is isomorphic to
$\coH_{\et,\rmid}^1({\mathbb{G}}_{\mathrm{m}, \ol{\FF}_2}, \Sym^k\Kl_2)$.
\smallskip
\item\label{thm:structure2} If $p$ is an odd prime, then $V_{k,\ell}$ is at most tamely ramified at $p$.
More precisely, the restriction of $V_{k,\ell}$ to $\Gal(\ol\QQ_p/\QQ_p)$
decomposes into an orthogonal sum $M\oplus E$, where
\begin{itemize}
\item
$M[\zeta] = \coH_{\et,\rmid}^1(\Gmolf, \Sym^k\Kl_2)$,
\smallskip
\item
$E$ is generated by vanishing cycles, one for each $a \in \Theta_p^+ \cup \Theta_p^{-},$ on which the Galois group acts through the character $\varepsilon_a \otimes \chi_{\mathrm{cyc}}^{-m-1},$ where \hbox{$\varepsilon_a \colon \Gal(\ol \QQ_p / \QQ_p) \to \{\pm 1\}$} stands for the primitive character associated with the extension
\[\QQ_p\biggl(\sqrt{(-1)^{\sfrac{(1+ap)}{2}} 2ap}\biggr) \quad \text{of}\quad \QQ_p.
\]
\end{itemize}

\smallskip

\noindent In particular, decomposing $E=E^+ \oplus E^{-}$ according to whether $a$ belongs to $\Theta_p^+$ or $\Theta_p^{-}$, the invariants under inertia are $V_{k, \ell}^{I_p}=M \oplus E^+$ and $E^+$ is a semi-simple $\Gal(\ol{\FF}_p/\FF_p)$\nobreakdash-module with reciprocal characteristic polynomial
of Frobenius
\begin{equation*}\label{eq:actionFrobeniusE}
\det ( 1-F_pT \mid E^+) = \prod_{a \in \Theta_p^+}
	\biggl( 1-\biggl(\frac{(-1)^{(1+ap)/2}2a'}{p} \biggr)p^{m+1}T \biggr),
\end{equation*}
where $a' = ap^{-v_p(a)}$ denotes the prime-to-$p$ part of $a$
and $\left(\sfrac{\cdot}{p}\right)$ the Legendre symbol.
\end{enumerate}
\end{thm}

\begin{proof}
There is nothing to prove if $k=1$,
so we assume $k\geq 3$.
To shorten notation,
we omit the coefficients $\QQ_\ell$
from the \'etale cohomology groups and write $G=\symgp_k \times \mu_2$, so that
\[
V_{k,\ell}= \gr^W_{k-1} \coH_{\et,\cp}^{k-1}(\KM_{\ol\QQ})^{G,\chi}(-1).
\]
Set $\ol\KM^{(0)} = \ol\KM$
and, for each $i \geq 1$, let $\ol\KM^{(i)}$ denote the disjoint union of all $i$-fold intersections
of distinct irreducible components of $\ol\KM\setminus\KM$. The spectral sequence
\begin{equation}
E_1^{i, j} = \coH_\et^j(\ol\KM^{(i)}_{\ol\FF})
\Longrightarrow \coH_{\et,\cp}^{i+j}(\KM_{\ol\FF})
\qquad (i, j \geq 0)
\end{equation} computes the étale cohomology with compact support of $\KM$ over the fields $\FF=\QQ$ or $\FF_p$.

For $\FF=\QQ$,
the spectral sequence degenerates at $E_2$
since, $\ol\KM^{(i)}_\QQ$ being a smooth proper variety for all $i \geq 0$,
the source and the target of the differentials in the second page are pure of different weights. The graded piece of weight $k-1$ is thus isomorphic to $E_2^{0, k-1}$ and
\begin{equation}\label{eqn:isom1graded}
\begin{aligned}
\gr^W_{k-1} \coH_{\et,\cp}^{k-1}(\KM_{\ol\QQ})
&= \ker \left\{ \coH_\et^{k-1}(\ol\KM_{\ol\QQ})
	\to \coH_\et^{k-1}(\ol\KM^{(1)}_{\ol\QQ}) \right\} \\
&= \image \left\{ \coH_{\et,\cp}^{k-1}(\KM_{\ol\QQ})
	\to \coH_\et^{k-1}(\ol\KM_{\ol\QQ}) \right\},
\end{aligned}
\end{equation}
where the second map is the surjective edge map from the abutment $\coH_{\et,\cp}^{k-1}(\KM_{\ol\QQ})$ to $E_2^{0, k-1}$. On the other hand, by Theorem \ref{thm:etalerealisation}, there is an isomorphism
\[
\coH_{\et,\rmid}^1(\Gmolf, \Sym^k\Kl_2)\cong\gr^W_{k-1} \coH_{\et,\cp}^{k-1}(\KM_{\ol\FF_p})^{G, \chi}(-1)[\zeta].
\]
This cohomology is pure of weight~\hbox{$k+1$}
and has dimension $m$ if $p=2$ and \hbox{$m-\flr{\sfrac{k}{2p} + \sfrac{1}{2}}$}
if $p\geq 3$ by \eqref{eq:degZ_kodd}.
For each $i \geq 1$,
the variety $\ol\KM^{(i)}_{\ol\FF_p}$ is smooth and proper.
Hence the étale cohomology $\coH_\et^{k-1-i}(\ol\KM^{(i)}_{\ol\FF_p})$
is pure of weight $k-1-i$,
and so is $E_\infty^{i,k-1-i}$
in the above spectral sequence for $\FF=\FF_p$.
The only contribution of weight $k-1$ is thus given by
\begin{equation}
\label{eqn:isom2graded}
\gr^W_{k-1} \coH_{\et,\cp}^{k-1}(\KM_{\ol\FF_p})
= \gr^W_{k-1}\image \Bigl\{ \coH_{\et,\cp}^{k-1}(\KM_{\ol\FF_p})
	\to \coH_\et^{k-1}(\ol\KM_{\ol\FF_p}) \Bigr\}.
\end{equation}

Assume $p=2$.
The proper variety $\ol\KM_{\ol\FF_2}$
has a quadratic non-ordinary isolated singularity, namely the point with coordinates
$y_i=1$.
When this is the case, the cohomology sheaf $\Rder^n\Phi$
of the vanishing cycle complex on $\ol\KM_{\ol\FF_2}$
is non-zero only in degree $n=k-1$ by
\hbox{\cite[Cor.\,2.10]{Illusie},} which implies that the cospecialization morphism
$\coH_\et^{k-1}(\ol\KM_{\ol\FF_2})
\to \coH_\et^{k-1}(\ol\KM_{\ol\QQ})$
is injective. From the isomorphism \eqref{eqn:isom1graded}
and the commutativity of the square
\[\xymatrix{
\coH_{\et,\cp}^{k-1}(\KM_{\ol\FF_2}) \ar[r]\ar[d]
& \coH_{\et,\cp}^{k-1}(\KM_{\ol\QQ}) \ar[d] \\
\coH_\et^{k-1}(\ol\KM_{\ol\FF_2}) \ar[r]
& \coH_\et^{k-1}(\ol\KM_{\ol\QQ}),}
\]
we deduce an injection
\begin{equation}\label{eq:p=2_Vkl}
\image \Bigl\{ \coH_{\et,\cp}^{k-1}(\KM_{\ol\FF_2})
	\to \coH_\et^{k-1}(\ol\KM_{\ol\FF_2}) \Bigr\}^{G,\chi}
\to \gr^W_{k-1} \coH_{\et,\cp}^{k-1}(\KM_{\ol\QQ})^{G,\chi}
= V_{k,\ell}(1).
\end{equation}
Since the $\chi$-isotypic part of the left-hand side of \eqref{eqn:isom2graded}
has dimension $m=\dim V_{k,\ell}$, it follows that \eqref{eq:p=2_Vkl} is indeed an isomorphism, hence an isomorphism
\[
V_{k,\ell}= \gr^W_{k-1} \coH_{\et,\cp}^{k-1}(\KM_{\ol\FF_2})^{G,\chi}(-1)
\]
of representations of $\Gal(\ol\FF_2/\FF_2)$. This concludes the proof of~\eqref{thm:structure1}.

Now suppose $p\geq 3$ and consider the $G$-equivariant commutative diagram with exact rows and columns
\[\xymatrix{
& \coH_{\et,\cp}^{k-1}(\KM_{\ol\FF_p}) \ar[r]\ar[d]_\beta
& \coH_{\et,\cp}^{k-1}(\KM_{\ol\QQ}) \ar[d]^\alpha \\
0 \ar[r] & \coH_\et^{k-1}(\ol\KM_{\ol\FF_p}) \ar[r]\ar[d]
& \coH_\et^{k-1}(\ol\KM_{\ol\QQ}) \ar[r]^(.43)\gamma\ar[d]
& \bigoplus_{x\in\Sigma} \QQ_\ell(-m) \ar[r] & 0, \\
& \coH_\et^{k-1}(\ol\KM^{(1)}_{\ol\FF_p}) \ar[r]^\sim
& \coH_\et^{k-1}(\ol\KM^{(1)}_{\ol\QQ}) }
\]
in which the middle row is given by the Picard-Lefschetz formula. Let $\Delta = \bigoplus_{x\in \Sigma} \QQ_\ell(-m)\delta_x$
be the subspace of $\coH_\et^{k-1}(\ol\KM_{\ol\QQ})$
generated by vanishing cycle classes, which is the orthogonal complement of the image of
$\coH_\et^{k-1}(\ol\KM_{\ol\FF_p})$.
Let $\Rder\Psi$ be the complex of nearby cycles on $\ol\KM_{\ol\FF_p}$.
Since $\delta_x$ is a generator of the local cohomology
$\coH_{\{x\}}^{k-1}(\ol\KM_{\ol\FF_p}, \Rder\Psi(m))$
with support~$\{x\}$ contained in $\KM_{\ol\FF_p}$, the subspace
$\Delta$ lies in the image of $\alpha$.
The image of $\beta$ and $\Delta$ being orthogonal as subspaces of
$\coH_\et^{k-1}(\ol\KM_{\ol\QQ})$, the equalities
\[
\image(\beta)^{G,\chi} = \gr^W_{k-1}\image(\beta)^{G,\chi} \quad \text{and}\quad
\image(\alpha)^{G,\chi} = \image(\beta)^{G,\chi} \oplus \Delta^{G,\chi}
\]
hold, with $\dim \Delta^{G,\chi} = \flr{\sfrac{k}{2p} + \sfrac{1}{2}}$
by a dimension count and \eqref{eq:decomp_Z_odd}.
These are the factors~$M$ and $E$ in part~\eqref{thm:structure2} of the theorem.

To compute the Galois action on~$E$, recall the quadratic form $Q_{ap}$ from \eqref{eq:KM_char_p} and consider the projective quadric $D = (2apw^2 + Q_{ap})$ in $\PP_{\QQ_p}^k$, as well as the hyperplane section $C = D \cap (w)$. The space $\QQ_\ell(-m)\delta_x$ is described in \cite[Exp.\,XV, Prop.\,2.2.3]{SGA7} as $\coH_{\et,\cp}^{k-1}((D\setminus C)_{\ol\QQ_p})$, which is equal to the primitive part of $\coH_\et^{k-1}(D_{\ol\QQ_p})$ by the localization sequence for \'etale cohomology with compact support. As a non-degenerate quadratic form over $\QQ_p$, the defining equation of~$D$ has discriminant $d=(-1)^{\sfrac{(k-ap)}{2}}2ap$, and hence $\Gal(\ol\QQ_p \slash \QQ_p)$ acts on
\begin{displaymath}
\det \coH_\et^{k-1}(D_{\ol\QQ_p}, \QQ_\ell(m))=\coH^{k-1}_{\et, \textup{prim}}(D_{\ol\QQ_p})(m)
\end{displaymath}
via the character $\varepsilon_a$
corresponding to the extension
$\QQ_p(\sqrt{(-1)^{\sfrac{(k+1)}{2}}d})$ by~\cite[\S 5.2]{Saito12}.
Noting the extra twist in the expression \eqref{eq:defGaloisreps} of $V_{k, \ell}$ in terms of $\coH^{k-1}_{\et}(\KM_{\ol\QQ})$, this proves the first statement about $E$. This extension is unramified if and only if $v_p(a)$ is odd, in which case it is equal to $\QQ_p(\sqrt{(-1)^{(1+ap)/2}2a'})$ and the last assertion in \eqref{thm:structure2} follows.
\end{proof}

\begin{remark} In the case at hand, instead of invoking \cite{Saito12} one can directly see the action of $\Gal(\ol\QQ_p/\QQ_p)$ on the primitive cohomology of the quadric by regarding it as defined over~$\ZZ$. Indeed, $D$ has good reduction at all primes $r$ with even $v_r(d)$ and, \eg by point counting over $\FF_r$, Frobenius acts as multiplication by $\left(\frac{(-1)^{(k+1)/2}d}{r}\right)r^{m}.$ Chebotarev's density theorem then implies that $\Gal(\ol\QQ / \QQ)$ acts on $\coH^{k-1}_{\et, \textup{prim}}(D_{\ol \QQ})(m)$ through the character corresponding to the extension $\QQ(\sqrt{(-1)^{(1+ap)/2}2ap})$. Note that $D$ has good reduction at~$p$ if $v_p(a)$ is odd.
\end{remark}

From Theorem~\ref{Thm:etale_k_odd} and Serre's recipe \cite{Serre70}, we immediately derive the local $L$-factors and the conductor of the system of Galois representations $\{V_{k,\ell}\}_\ell$
associated with the motive~$\Motive_k$. For each prime $p$, define $L_k(p; T)$ as the reciprocal of the polynomial with integer coefficients
\begin{equation}\label{eq:bad-factors-odd-case}
\det ( 1-F_pT \mid V_{k, \ell}^{I_p})=\begin{cases}
M_k(2; T) & \text{if }p = 2, \\
M_k(p; T) \prod_{a \in \Theta_p^+}
\left( 1-\left(\frac{(-1)^{(1+ap)/2}2a'}{p}\right)p^{m+1}T \right) & \text{if }p\geq 3.
\end{cases}
\end{equation}
The $L$-function of $\Motive_k$ is the Euler product
\[
L_k(s) = \prod_p L_k(p; p^{-s}),
\]
which converges absolutely on the half-plane $\mathrm{Re}(s)>1+\sfrac{(k+1)}{2}$.

Recall from \cite[eq.\,(11) and (29)]{Serre70} that the exponent of $p$ in the global conductor of $\{V_{k, \ell}\}_\ell$ is given by the sum of the Swan conductor of the restriction of $V_{k,\ell}$ to $\Gal(\ol\QQ_p/\QQ_p)$ and the codimension of $V_{k, \ell}^{I_p}$. Since $V_{k, \ell}$ is at most tamely ramified at all primes $p \neq \ell$, the Swan conductor vanishes and we are left with $\sfrac{(k-1)}{2}-\dim V_{k,\ell}^{I_p}$, which in view of \eqref{eq:degZ_kodd}, \eqref{eq:decomp_Z_odd}, and \eqref{eq:sum+and-} is equal to~$|\Theta_p^-|$ if $p$ is odd and to zero if $p=2$. Thus, the value of the conductor is
\begin{equation}\label{eq:conductor_k_odd}
\cond_k = \prod_{\text{$p$ odd}} p^{|\Theta_p^-|}
= 1_\mathrm{s} 3_\mathrm{s} 5_\mathrm{s} \cdots k_{\mathrm{s} },
\end{equation} where $n_\mathrm{s}$ denotes the product of all primes~$p$
such that $v_p(n)$ is odd.

\begin{remark}\label{rmk:L_ss_k_odd}
It is clear from Theorem~\ref{Thm:etale_k_odd} that both the $L$-factor $L_k(p;T)$
and the conductor~$\cond_k$
remain unchanged if one replaces the input $\{V_{k,\ell}\}_\ell$
with its semi-simplification~$\{V_{k,\ell}^\ss\}_\ell$.
\end{remark}

\subsubsection{The $\ell$-adic case for even symmetric powers}

Let $k \geq 2$ be an even integer and $p$ an odd prime number.
In this case, the singular locus of $\KM_{\ol\FF_p}$
consists of $1+\flr{\sfrac{k}{2p}}$
orbits of ordinary quadratic points
under the action of $\symgp_k\times\mu_2$.
They are indexed by
non-negative even integers~$b$ satisfying $bp\leq k$,
with points with coordinates $y_i = 1$ (\resp $-1$)
for $1\leq i \leq \psfrac{bp+k}{2}$ (\resp $i>\psfrac{bp+k}{2}$) as representatives.
Writing $y_i = z_i+1$ (\resp $y_i=z_i-1$),
locally around each singularity the defining equation of $\KM$ in $\ZZ_p[\![z_1,\cdots,z_k]\!]$ has the shape
\begin{equation}\label{eq:KM_bp_char_p}
2bp + Q_{bp}+\text{higher order terms},
\quad
Q_{bp}= \sum_{i\leq (bp+k)/2} z_i^2 - \sum_{i>(bp+k)/2} z_i^2.
\end{equation}

\begin{thm}\label{Thm:etale_k_even}
Let $k$ be a positive even integer, either of the form $2m+2$ or $2m+4$ with even~$m$, and let $p$ and $\ell$ be distinct prime numbers with $p \geq 3$. Let $V_{k,\ell}$ be the $\ell$\nobreakdash-adic realization
of the motive $\Motive_k$, which is an $m$-dimensional $\QQ_\ell$-representation of~$\Gal(\ol \QQ/\QQ)$. Fix a place of~$\ol\QQ$ above $p$ and let~$I_p \subset \Gal(\ol\QQ_p/\QQ_p) \subset \Gal(\ol \QQ/\QQ) $ be the corresponding inertia subgroup.

Then $I_p$ acts unipotently on $V_{k,\ell}$. More precisely, the equality $(\sigma-1)^2=0$ holds for each~\hbox{$\sigma \in I_p$} acting on~$V_{k, \ell}$ and there exists an isotropic subspace $U \subset V_{k,\ell}$
of dimension~$\flr{\sfrac{k}{2p}}$, the image of the logarithm of the monodromy operator,
generated by vanishing cycles and such that the equality~\hbox{$V_{k,\ell}^{I_p} = U^\perp$} holds and that the induced map~$\sigma - 1\colon V_{k,\ell} \to V_{k,\ell}/U$ is zero. Moreover, there is an isomorphism of $\Gal(\ol{\FF}_p/\FF_p)$-modules
\begin{displaymath}
V_{k,\ell}^{I_p}[\zeta]=\coH_{\et,\cp}^1({\mathbb{G}}_{\mathrm{m}, \ol{\FF}_p}, \Sym^k\Kl_2)/E[\zeta],
\end{displaymath}
where $E$ is the trivial representation $\QQ_\ell(0)$ if $4$ does not divide $k$
and an extension of $\QQ_\ell(\sfrac{-k}{2})$ by $\QQ_\ell(0)$ otherwise.
\end{thm}

\begin{proof} There is nothing to prove for $k=2$, so we assume $k\geq 4$. Again, we set $G = \symgp_k\times\mu_2$ and we omit the coefficients $\QQ_\ell$ from the \'etale cohomology groups. Let us first recall from Theorem \ref{th:main} that, in characteristic zero, the mixed Hodge structure $\coH_\cp^{k-1}(\KM)^{G,\chi}/W_0$ is pure of weight~$k-1$
and has dimension $\psfrac{k-2}{2}$ if $k \equiv 2 \bmod{4}$, whereas if $k \equiv 0 \bmod{4}$, it is mixed of weights~$k-2$ and $k-1$
with graded pieces of dimension $1$ and $\psfrac{k-4}{2}$, respectively.

Let $S$ be the singular locus of $\KM$ in characteristic zero, which consists of $\binom{k}{k/2}$ ordinary quadratic points, and let $\KM'$ be the strict transform of $\KM$ inside the blow-up of $\Gm^k$ at $S$.
The preimage of $S$ in $\KM'$ is
a disjoint union of quadrics that we denote by $T$.
Consider the commutative diagram
\[\begin{array}{ccc}
T & \subset & \KM' \\
\big\downarrow && \big\downarrow \\
S & \subset & \KM
\end{array}
\]
and the corresponding commutative diagram with exact rows
\[\xymatrix{
\NM^{k-2}(T) \ar[r] &
\NM_\cp^{k-1}(\KM' \setminus T) \ar[r]\ar[d]^{\cong} &
\NM_\cp^{k-1}(\KM') \ar[r]\ar[d] & \NM^{k-1}(T) \\
\NM^{k-2}(S) \ar[r] &
\NM_\cp^{k-1}(\KM \setminus S) \ar[r] &
\NM_\cp^{k-1}(\KM) \ar[r] & \NM^{k-1}(S).
}
\]
Since $k$ is even and $\geq 4$,
the vanishing $\NM^{k-2}(S) = \NM^{k-1}(S) = \NM^{k-1}(T) = 0$ holds,
and hence we get an isomorphism
\begin{equation}\label{eq:coh_K'_to_K}
\NM_\cp^{k-1}(\KM') \To{\sim}
\NM_\cp^{k-1}(\KM).
\end{equation}
The above isomorphism remains true
if one replaces $\NM_\cp^{k-1}(\KM)$ and $\NM_\cp^{k-1}(\KM')$
with $\coH_{\et,\cp}^{k-1}(\KM_{\ol\FF})$ and $\coH_{\et,\cp}^{k-1}(\KM'_{\ol\FF})$
respectively, for $\FF = \QQ$ or $\FF_p$.

Recall the compactification $X$ of $\Gm^k$. Let $\ol{\KM}'$ be the closure of $\KM'$
in the blow-up of $X$ along $S$, and let $\ol\KM'^{(i)}$ be the disjoint union of all $i$-fold intersections
of distinct irreducible components of the boundary divisor $\ol\KM' \setminus \KM'$, with the usual convention $\ol\KM'^{(0)}=\ol\KM'$.
Consider the associated spectral sequence
\begin{equation}
(E_1^{i, j})_\FF = \coH_\et^j(\ol\KM'^{(i)}_{\ol\FF})
\Longrightarrow \coH_{\et,\cp}^{i+j}(\KM'_{\ol\FF})
\qquad (i, j \geq 0).
\end{equation}
In characteristic zero, since all $\ol\KM'^{(i)}_{\ol\QQ}$ are smooth and proper,
the spectral sequence degenerates at $E_2$
and one gets
\begin{displaymath}
\begin{aligned}
\gr^W_{k-1} \coH_{\et,\cp}^{k-1}(\KM'_{\ol\QQ})
&= \ker \Bigl\{ \coH_\et^{k-1}(\ol\KM'_{\ol\QQ})
	\to \coH_\et^{k-1}(\ol\KM'^{(1)}_{\ol\QQ}) \Bigr\} \\
&= \image \Bigl\{ \coH_{\et,\cp}^{k-1}(\KM'_{\ol\QQ})
	\stackrel{\alpha}{\to} \coH_\et^{k-1}(\ol\KM'_{\ol\QQ}) \Bigr\}
\end{aligned}
\end{displaymath}
exactly as in the case where $k$ is odd. The equality
\begin{equation}\label{eq:gr_KMp_Q}
V_{k,\ell}(1)=\image(\alpha)^{G, \chi}
\end{equation}
then follows by taking $\chi$-isotypic components. Moreover, the $E_2^{1,k-2}$-term reads
\begin{equation}\label{eq:E_2_1_k-2}
\gr^W_{k-2} \coH_{\et,\cp}^{k-1}(\KM'_{\ol\QQ}) =\frac{\ker\Bigl\{ \coH_\et^{k-2}(\ol\KM'^{(1)}_{\ol\QQ})
	\to \coH_\et^{k-2}(\ol\KM'^{(2)}_{\ol\QQ}) \Bigr\}}{\image\Bigl\{ \coH_\et^{k-2}(\ol\KM'_{\ol\QQ})
	\to \coH_\et^{k-2}(\ol\KM'^{(1)}_{\ol\QQ}) \Bigr\}}.
\end{equation}
By \eqref{eq:coh_K'_to_K}, the right-hand side is isomorphic to $\gr^W_{k-2} \coH_{\et,\cp}^{k-1}(\KM_{\ol\QQ})$, and hence its $\chi$-isotypic component has dimension $\delta_{4\ZZ}(k)$ as recalled at the beginning of the proof.

Since the singularities of $\ol\KM'_{\FF_p}$
consist only of ordinary quadratic points
supported on $\KM'_{\FF_p}$,
the Picard-Lefschetz formula
and base-change yield isomorphisms
\begin{align*}
\coH_\et^n(\ol\KM'_{\ol\FF_p}) &\To{\sim}
	\coH_\et^n(\ol\KM'_{\ol\QQ})
	\qquad \text{for }n \leq k-2, \\
\coH_\et^j(\ol\KM'^{(i)}_{\ol\FF_p}) &\To{\sim}
	\coH_\et^j(\ol\KM'^{(i)}_{\ol\QQ}),
	\quad\hspace{.5mm} \text{for }i \geq 1.
\end{align*}
In particular,
the equality $(E_2^{i, j})_{\FF_p} = (E_2^{i, j})_{\QQ}$
holds for all $i+j = k-1$ with $i\geq 1$, hence the degeneration $(E_2^{1,k-2})_{\FF_p}^{G,\chi}=(E_\infty^{1,k-2})_{\FF_p}^{G,\chi}$. By \eqref{eq:E_2_1_k-2},
this space vanishes for $k\equiv 2 \bmod{4}$ and is one-dimensional of weight $k-2$ if $4$ divides~$k$. Consider again the $G$-equivariant commutative diagram of $\Gal(\ol\QQ_p/\QQ_p)$-representations
\[\xymatrix{
& \coH_{\et,\cp}^{k-1}(\KM'_{\ol\FF_p}) \ar[r]\ar[d]_\beta
& \coH_{\et,\cp}^{k-1}(\KM'_{\ol\QQ}) \ar[d]^\alpha \\
0 \ar[r] & \coH_\et^{k-1}(\ol\KM'_{\ol\FF_p}) \ar[r]
& \coH_\et^{k-1}(\ol\KM'_{\ol\QQ}) \ar[r]^(.43)\gamma
& \bigoplus_{x\in\Sigma} \QQ_\ell(-\sfrac{k}{2}), }
\]
in which the second row is exact and the map $\gamma$ is defined by pairing with vanishing cycle classes
$\delta_x \in \coH_\et^{k-1}(\ol\KM'_{\ol\QQ})(\psfrac{k-2}{2}),$
one for each $x \in \Sigma$. Setting $C = \gamma(\image(\alpha)^{G,\chi})$ and regarding $\coH_\et^{k-1}(\ol\KM'_{\ol\FF_p})$ as a subspace of $\coH_\et^{k-1}(\ol\KM'_{\ol\QQ})$, we obtain a diagram
\begin{equation}\label{eq:weights_k_even}
\begin{array}{c}\xymatrix@=6mm{
& \image(\beta)^{G,\chi} \ar@{^{ (}->}[d] \\
0 \ar[r]
& \image(\alpha)^{G,\chi}\cap\coH_\et^{k-1}(\ol\KM'_{\ol\FF_p}) \ar[r]
& \image(\alpha)^{G,\chi} \ar[r]
& C \ar[r] \ar[r]\ar@{^{ (}->}[d] & 0, \\
&&& \left(\bigoplus_{x\in\Sigma} \QQ_\ell(-\sfrac{k}{2})\right)^{G,\chi}
}\end{array}
\end{equation}
in which the row is exact and the vertical arrows are injective. We now show that both of these inclusions are in fact equalities. Taking the identity $\image\beta = (E_\infty^{0, k-1})_{\FF_p}$ into account, the spectral sequence yields an exact sequence of unramified $\Gal(\ol\QQ_p/\QQ_p)$-modules
\[
0 \to (E_\infty^{1,k-2})_{\FF_p}^{G,\chi}
\to \coH_{\et,\cp}^1(\Gmolf,\Sym^k\Kl_2)(1)/W'_0
\to
\image(\beta)^{G,\chi}/W'_0
\to 0
\]
(here $W'$ denotes the weight filtration on étale cohomology over finite fields
given by the eigenvalues of Frobenius, in order to distinguish it from that for $\Gal(\ol\QQ/\QQ)$-representations). The calculation \eqref{eq:decomp_Z_even} of the action of Frobenius on $\coH_{\et,\cp}^1(\Gmolf, \Sym^k\Kl_2)$ implies that the rightmost term $\image(\beta)^{G,\chi}/W'_0$ has weights $k-2$ and $k-1$, with graded pieces of dimensions
\[
\dim \gr^{W'}_{k-2}\image(\beta)^{G,\chi}=\flr{\frac{k}{2p}} \quad \text{and} \quad \dim \gr^{W'}_{k-1}\image(\beta)^{G,\chi}= \frac{k-2}{2}-2\flr{\frac{k}{2p}}-\delta_{4\ZZ}(k).
\]
In particular, $\image(\beta)^{G,\chi}$ has dimension at least
\[
\frac{k-2}{2}-\delta_{4\ZZ}(k)-\flr{\frac{k}{2p}}=\dim\image(\alpha)^{G,\chi}-\flr{\frac{k}{2p}},
\]
and weights $k-2, k-1$, and possibly zero.
On the one hand, the unramified representation~$(\bigoplus_{x\in\Sigma} \QQ_\ell(-\sfrac{k}{2}))^{G,\chi}$
has dimension $|G\backslash\Sigma| = \flr{\sfrac{k}{2p}}$ and Frobenius acts on it as multiplication by~$p^{\sfrac{k}{2}}$. On the other hand, taking \eqref{eq:gr_KMp_Q} into account,
the space $\image(\alpha)^{G,\chi}$
is equipped with a non-degenerate $\Gal(\ol\QQ_p/\QQ_p)$-equivariant Poincar\'e pairing with values in the unramified Tate twist $\QQ_\ell(1-k)$
of (Frobenius) weight $2k-2$. Since $\ol\KM'$ is a projective variety, the equality~$\coH_\et^{k-1}(\ol\KM'_{\ol\FF_p})= W'_{k-1}\coH_\et^{k-1}(\ol\KM'_{\ol\FF_p})$ holds, and hence the orthogonal complement~$C'$ of the subspace~$W'_{k-2}\image(\beta)^{G,\chi}$ of $\image(\alpha)^{G,\chi}$ contains $\image(\alpha)^{G,\chi}\cap\coH_\et^{k-1}(\ol\KM'_{\ol\FF_p})$. From this we derive the inequalities of dimensions
\[
\flr{\frac{k}{2p}} \leq \dim W'_{k-2}\image(\beta)^{G,\chi}
= \dim\image(\alpha)^{G,\chi}/C'
\leq \dim C \leq \flr{\frac{k}{2p}}.
\]
It follows that the right vertical inclusion is an equality, and hence the left one as well since the dimension of $\image(\alpha)^{G,\chi}\cap\coH_\et^{k-1}(\ol\KM'_{\ol\FF_p})$ is then equal to $\dim\image(\alpha)^{G,\chi}-\flr{\sfrac{k}{2p}} \leq \image(\beta)^{G,\chi}$.
In addition, let $\Delta \subset \coH_\et^{k-1}(\ol\KM'_{\ol\QQ})$
be the subspace generated by the vanishing cycle classes
$\QQ_\ell(\psfrac{2-k}{2})\delta_x$ for $x\in \Sigma$.
Since
$\dim\Delta^{G,\chi} \leq \flr{\sfrac{k}{2p}}$
and $\Delta \subset \Delta^\perp = \ker\gamma$,
we get
\[
\Delta^{G,\chi} = W'_{k-2}\image(\beta)^{G,\chi}
\quad\text{and}\quad
(\Delta^\perp)^{G,\chi} = \image(\beta)^{G,\chi}.
\]

By the Picard-Lefschetz formula \cite[Exp.\,XV, Th.\,3.4]{SGA7},
an element $\sigma$ of the inertia group~$I_p$ acts on $v \in \coH_\et^{k-1}(\ol\KM'_{\ol\QQ})$ as
\[
\sigma(v) = v - (-1)^{k/2} t_\ell(\sigma) \sum_{x \in \Sigma} \gen{v,\delta_x}\delta_x,
\]
where $\gen{v,\delta_x} \in
\coH_\et^{2k-2}(\ol\KM'_{\ol\QQ})(\psfrac{k-2}{2}) \cong \QQ_\ell(-\sfrac{k}{2})$
and $t_\ell\colon I_p \to \varprojlim \mu_{\ell^n}(\ol\QQ_\ell)$
is the fundamental tame character.
From this we derive the vanishing $(\sigma-1)^2=0$
for each $\sigma \in I_p$ acting on $V_{k,\ell}$
and the equality $V_{k,\ell}^{I_p}= \image(\beta)^{G,\chi}(-1)$.
Observe that we have proved that $V_{k, \ell}$ satisfies the \emph{weight-monodromy conjecture}, \ie that the associated Weil-Deligne representation is pure of weight $k+1$, in the terminology of Corollary \ref{cor:automorphy_purity} \textit{infra}.
(Conversely, that corollary can be used to show that the vertical arrows in \eqref{eq:weights_k_even} are equalities.) This completes the proof in the case $k\equiv 2 \bmod{4}$.

Finally, we look at the action of Frobenius on $\Delta$.
Recall that each $V_x = \QQ_\ell(\psfrac{2-k}{2})\delta_x$
corresponds to the singularity defined by the equation
\eqref{eq:KM_bp_char_p} for an even positive integer~$b$.
Consider the quadric $C = (Q_{bp}) \subset \PP_{\FF_p}^{k-1}$, whose primitive cohomology $\coH_{\et, \textup{prim}}^{k-2}(C_{\ol\FF_p})$ coincides with~$V_x$ by \cite[Exp.\,XV, Prop.\,2.2.3]{SGA7}.
The quadratic form $Q_{bp}$ has discriminant $d = (-1)^{(k-bp)/2}$,
and therefore $F_p$ acts on the primitive cohomology as multiplication by
\[
\biggl(\frac{(-1)^{k/2}d}{p}\biggr)p^{(k-2)/2}= (-1)^{bp(p-1)/4} p^{(k-2)/2}.
\]
For $p \equiv 1 \bmod{4}$,
the sign is always positive, whereas, for $p \equiv 3 \bmod{4}$,
there are $\flr{\sfrac{k}{4p} + \sfrac{1}{2}}$
values of $b$ such that the sign is negative.
Comparing with the eigenvalues of $F_p$ in \eqref{eq:decomp_Z_even},
one concludes that in the case $k\equiv 0 \bmod{4}$,
the kernel of
$\beta\colon \coH_{\et,\cp}^{k-1}(\KM'_{\ol\FF_p})^{G,\chi}/W_0
\to \coH_\et^{k-1}(\ol\KM'_{\ol\FF_p})^{G,\chi}$
is a factor $\QQ_\ell(\psfrac{2-k}{2})$. This completes the proof.
\end{proof}

Similarly to the case of odd symmetric powers, the above theorem gives the local $L$-factors and the conductor away from $p=2$. Indeed, defining $L_k(p; T)=\det ( 1-F_pT \mid V_{k, \ell}^{I_p})^{-1}$ for a prime number $p$, Theorem~\ref{Thm:etale_k_even} and equation \eqref{eq:decomp_Z_even} imply the equalities
\begin{equation}\label{eq:local-factors-even}
L_k(p; T)^{-1}=\begin{cases}
(1-p^{\sfrac{k}{2}}T)^{ \flr{\sfrac{k}{2p}}} M_k(p; T) & \text{if }p \equiv 1 \bmod{4}, \\
(1+p^{\sfrac{k}{2}}T)^{\flr{\frac{k}{4p}+\frac{1}{2}}} (1-p^{\sfrac{k}{2}}T)^{ \flr{\frac{k}{2p}}-\flr{\frac{k}{4p}+\frac{1}{2}}} M_k(p; T) & \text{if }p \equiv 3 \bmod{4}.
\end{cases}
\end{equation}
The $L$-function of $\Motive_k$ is the Euler product
\begin{displaymath}
L_k(s) = \prod_p L_k(p; p^{-s}),
\end{displaymath}
which again converges absolutely for $\mathrm{Re}(s)>1+\sfrac{(k+1)}{2}$.

As for the conductor, Serre's recipe yields in this case that the exponent of an odd prime $p$ is given by $\flr{\sfrac{k}{2p}}$. The conductor is thus equal to
\begin{equation}
2^{r_k}\prod_{\text{$p$ odd}} p^{\flr{\sfrac{k}{2p}}}=2^{r_k} \cdot 2_\mathrm{u} 4_\mathrm{u} 6_\mathrm{u} \cdots k_\mathrm{u},
\end{equation} where $r_k=\mathrm{Sw}(V_{k, \ell}|_{\Gal(\ol\QQ_2/\QQ_2)})+\mathrm{codim}\,V_{k, \ell}^{I_2}$ and $n_\mathrm{u}$ stands for the odd part of the radical (\ie the product of all odd primes dividing $n$). Broadhurst and Roberts conjecture that $r_k=\flr{\sfrac{k}{6}}$.

\subsubsection{The $p$-adic case}

We keep the setting of Section \ref{sect:rigid_char_p}, and let $\BB_\dR$, $\BB_\crys,$ and~$\BB_\st$ denote Fontaine's $p$-adic de Rham, crystalline, and semistable period rings over $\QQ_p$.
Recall from \eqref{eqn:isomrigidmiddlebis} and \eqref{eqn:isomrigidmiddle} that, for any prime $p$, there is an isomorphism of Frobenius modules
\begin{displaymath}
\coH_{\rrig,\rmid}^1(\Gm/K, \Sym^k\Kl_2)
= \gr^W_{k+1} \coH_{\rrig,\cp}^1(\Gm/K, \Sym^k\Kl_2).
\end{displaymath}
By \cite[Th.\,B]{Robba}, this $K$-vector space has dimension \hbox{$\flr{\psfrac{k-1}{2}}-\delta_{4\ZZ}(k)$} for all $p>k$ if $k$ is odd and for all $p>\sfrac{k}{2}$ if $k$ is even.

\begin{prop}\label{Prop:p-adic_realization}
Fix an integer $k \geq 1$,
a prime number $p$,
and a place of $\ol{\QQ}$ above $p$.
The $p$-adic representation $V_{k,p}$ of $\Gal(\ol\QQ_p/\QQ_p)$
is de Rham.
If $p$ is odd, then
$V_{k,p}$ is semistable over~$\QQ_p(\sqrt{-p})$
and there is an inclusion of Frobenius modules
\begin{equation}\label{eq:rigid-etale}
\coH_{\rig,\rmid}^1(\Gm/K, \Sym^k\Kl_2)
\to
(V_{k,p} \otimes \BB_\st)^{
\Gal(\ol{\QQ}_p/\QQ_p(\sqrt{-p}))} \otimes K.
\end{equation}
Under the extra assumption that $p > k$ if $k$ is odd (\resp $p >\sfrac{k}{2}$ if $k$ is even), the representation~$V_{k,p}$ is crystalline and there is an isomorphism of Frobenius modules
\[
\coH_{\rig,\rmid}^1(\Gm/K, \Sym^k\Kl_2) \cong (V_{k,p} \otimes \BB_{\crys})^{\Gal(\ol{\QQ}_p/\QQ_p)} \otimes K.
\]
\end{prop}

\begin{proof} The $p$-adic representations $\coH_\et^{k-1}(\ol\KM_{\ol\QQ},\QQ_p)(-1)$
and $\coH_\et^{k-1}(\ol\KM'_{\ol\QQ}, \QQ_p)(-1)$
arising from the smooth proper varieties $\ol\KM$ and $\ol\KM'$ are de Rham (see \eg \cite[\S 3.3(i) and \S 3.4]{Beilinson13}), and any subquotient of a de Rham representation is still de Rham. Hence, the first assertion follows from \eqref{eqn:isom1graded} for odd $k$, and from \eqref{eq:gr_KMp_Q} along with \eqref{eq:coh_K'_to_K} for even $k$.

For the remaining statements, we assume that $p$ is odd. We first treat the case of even~$k$. As in the proof of Theorem~\ref{Thm:etale_k_even}, consider the resolution of singularities $\KM'$ of $\KM$
and its compactification $\ol\KM'$
induced from the blow-up of the ambient torus
and the explicit toric compactification $X$ over $\ZZ_p$.
Recall from Section \ref{sect:rigid_char_p} that the localization sequence for rigid cohomology with compact support yields an isomorphism
\[
\Bigl(\gr^W_{k-1}\coH_{\rig,\cp}^{k-1}(\KM_{\FF_p}/ \QQ_p)^{
	\symgp_k\times\mu_2,\,\chi} \Bigr)(-1)[\varpi]
\cong \gr^W_{k+1}\coH_{\rig,\cp}^1(\Gm/K, \Sym^k\Kl_2).
\]
Besides, arguing as in \eqref{eq:coh_K'_to_K}, there are isomorphisms
\[
\coH_{\rig,\cp}^{k-1}(\KM'_{\FF_p}/\QQ_p) \To{\sim}
\coH_{\rig,\cp}^{k-1}(\KM_{\FF_p}/\QQ_p)
\quad \text{and}\quad
\coH_{\et,\cp}^{k-1}(\KM'_{\ol\QQ_p},\QQ_p) \To{\sim}
\coH_{\et,\cp}^{k-1}(\KM_{\ol\QQ_p},\QQ_p)
\]
of Frobenius modules and $\Gal(\ol\QQ_p/\QQ_p)$-modules respectively.

For rigid cohomology, consider the spectral sequence
\[
E_1^{i, j} = \coH_{\rig}^j(\ol\KM'^{(i)}_{\FF_p}/\QQ_p)
\Longrightarrow \coH_{\rig,\cp}^{i+j}(\KM'_{\FF_p}/\QQ_p) \qquad (i, j \geq 0),
\]
as we did in the $\ell$-adic setting (\cf \cite[Prop.\,8.2.17 \& 8.2.18(ii)]{LeStum07}), and let
\[
\alpha \colon \coH_{\rig}^{k-1}(\ol\KM'_{\FF_p}/\QQ_p) \longrightarrow
\coH_{\rig}^{k-1}(\ol\KM'^{(1)}_{\FF_p}/\QQ_p)
\]
denote the differential from $E_1^{0, k-1}$ to $E_1^{1, k-1}$. Since the varieties $\ol\KM'^{(i)}$ are smooth and proper for all $i\geq 1$, the only contribution of weight $k-1$ to the abutment of the spectral sequence comes from the kernel of $\alpha$, hence an isomorphism
\[
\gr^W_{k-1}\coH_{\rig,\cp}^{k-1}(\KM'_{\FF_p}/\QQ_p) \stackrel{\sim}{\longrightarrow} \gr^W_{k-1}\ker\alpha.
\]

Let $L$ be the ramified quadratic extension of $\QQ_p$ contained in $K$, which is $L=\QQ_p(\sqrt{-p})$ thanks to the equality $\sqrt{-p}=\varpi^{(p-1)/2}$. Since the singularities of $\ol\KM'$ consist only of ordinary quadratic points supported on $\KM'_{\FF_p}$, the $p$-adic Picard-Lefschetz formula \cite[Th.\,1.1]{Mieda07} yields a commutative diagram
of $L$-modules
\[
\xymatrix{
\coH_{\rig,\cp}^{k-1}(\KM'_{\FF_p}/\QQ_p)_L \ar[r]
	& \coH_{\rig}^{k-1}(\ol\KM'_{\FF_p}/\QQ_p)_L
		\ar[r]^{\alpha_L}\ar@{^{ (}->}[d]^\beta
	& \coH_{\rig}^{k-1}(\ol\KM'^{(1)}_{\FF_p}/\QQ_p)_L \ar@{=}[d] \\
& \coH_\dR^{k-1}(\ol\KM'_{\QQ_p})_L \ar[r]
	& \coH_\dR^{k-1}(\ol\KM'^{(1)}_{\QQ_p})_L, }
\]
in which the map $\beta$ is injective. Hence, $\beta$ induces an inclusion
\begin{equation}\label{eq:rigid-deRham}
\ker\alpha_L \to
\ker\left\{ \coH_\dR^{k-1}(\ol\KM'_{\QQ_p})_L \to
	\coH_\dR^{k-1}(\ol\KM'^{(1)}_{\QQ_p})_L \right\}.
\end{equation}

Besides, over the ring of integers $\ZZ_p[\sqrt{-p}]$ of $L$, with uniformizer $\sqrt{-p}$, each ordinary quadratic point of $\ol\KM'$ is formally defined by an equation
\[
Q -u\cdot (\sqrt{-p})^2,
\]
where $u$ is some unit and $Q$ equals $Q_{ap}$ from \eqref{eq:KM_char_p} for odd $k$
and $Q_{bp}$ from \eqref{eq:KM_bp_char_p} for even $k$. In both cases, the equation $Q=0$ defines a smooth quadric in $\PP^{k-1}$ over $\ZZ_p[\sqrt{-p}]$,
so that the assumptions of \cite[(2.3)]{Mieda07} are satisfied.
Let $\ol\KM''$ be the blow-up of
$\ol\KM'\otimes_{\ZZ_p}\ZZ_p[\sqrt{-p}]$ along the ordinary quadratic points. Then $\ol\KM''$ is semistable over $\ZZ_p[\sqrt{-p}]$ by \loccit, and hence any subquotient of $\coH_\et^{k-1}(\ol\KM''_{\ol\QQ_p},\QQ_p)$
is a semistable $\Gal(\ol\QQ_p/L)$-representation \eg by \cite[\S 3.3(iii)]{Beilinson13}. In particular, on noting the equalities $\KM'_L = \KM''_L$
and $\ol\KM'^{(i)}_L = \ol\KM''^{(i)}_L$, the representation
$V_{k,p}(1)$ is semistable and the expression \eqref{eq:gr_KMp_Q} yields
\begin{align}
\left(\gr^W_{k-1} \coH_{\et,\cp}^{k-1}(\KM''_{\ol\QQ_p},\QQ_p)
	\otimes \BB_\st \right)^{\Gal(\ol\QQ_p/L)} \otimes L
&= \left(\gr^W_{k-1} \coH_{\et,\cp}^{k-1}(\KM'_{\ol\QQ_p},\QQ_p)
	\otimes \BB_\dR \right)^{\Gal(\ol\QQ_p/L)} \nonumber \\
&= \ker\left\{ \coH_\dR^{k-1}(\ol\KM'_{\QQ_p})_L \to
	\coH_\dR^{k-1}(\ol\KM'^{(1)}_{\QQ_p})_L \right\} \label{eq:etale-deRham}
\end{align}
by the $p$-adic Hodge comparison theorem.
In addition, since both the Frobenius structure on the $L$-vector space
$\coH_\dR^{k-1}(\ol\KM'_{\QQ_p})_L
= \coH_\dR^{k-1}(\ol\KM''_L)$
and the map $\beta$ are constructed
by means of logarithmic de Rham-Witt complexes
(see the proof of \cite[Th.\,2.13]{Mieda07}),
$\beta$ is compatible with the Frobenius action.
The inclusion \eqref{eq:rigid-etale} of Frobenius modules follows by extending scalars to $K$ and taking $\chi$-isotypic components
in \eqref{eq:rigid-deRham} and \eqref{eq:etale-deRham}.

If $k$ is even and $p>\sfrac{k}{2}$, then
$\ol\KM'$ is smooth.
Moreover,
$\beta$ is induced by the isomorphism
\[
\coH_{\rig}^{k-1}(\ol\KM'_{\FF_p}/\QQ_p) \to
\coH_\dR^{k-1}(\ol\KM'_{\QQ_p})
\]
and $\ker\alpha$ is pure of weight $k-1$.
By \cite[\S 3.3(iii)]{Beilinson13}
and a similar argument as in the above case,
we obtain the identity
\[
(V_{k,p}\otimes \BB_\crys)^{\Gal(\ol\QQ_p/\QQ_p)}[\varpi]
= \coH_{\rig,\rmid}^1(\Gmf /K, \Sym^k\Kl_2),
\]
thus finishing the proof for even $k$.

In case $k$ is odd, there is no need to perform the first resolution of singularities, so we simply take $\KM' = \KM$ in what precedes and do the same proof as for even $k$.
\end{proof}

\begin{cor}\label{cor:Newton-Hodge}
Let $k \geq 1$ be an integer and $p$ an odd prime number.
The Newton polygon of the Frobenius module
$\coH_{\rig,\cp}^1(\Gmf/K, \Sym^k\Kl_2)$
lies above the Hodge polygon of~$\coH_{\dR,\cp}^1(\Gm,\Sym^k\Kl_2)$.
In case $p>k$ if $k$ is odd or $2p>k$ if $k$ is even,
the endpoints of both polygons coincide.
\end{cor}

\begin{proof}
Considered as a representation of $\Gal(\ol{\QQ}_p/\QQ_p(\sqrt{-p}))$, the $p$-adic \'etale realization $V_{k,p}$
of~$\Motive_k$ is semistable, and hence the associated filtered $(\varphi, \rN)$-module \hbox{$(V_{k, p} \otimes\!\BB_\st)^{\Gal(\ol{\QQ}_p/\QQ_p(\sqrt{-p}))}$} is (weakly) admissible. This means precisely that its Newton polygon lies above its Hodge polygon, both having the same endpoints.
Notice that these polygons are additive
with respect to the Minkowski sum
(i.e., the sum of the convex sets above the polygons in the plane~$\RR^2$)
for filtered Frobenius modules.
Since the Frobenius module $\coH_{\rig,\rmid}^1(\Gmf/K, \Sym^k\Kl_2)$ injects (after extending scalars to $K$) into the Frobenius module associated with $V_{k,p}$ by Proposition~\ref{Prop:p-adic_realization},
the Newton polygon of the former lies above that of the latter,
which in turns lies above the Hodge polygon of $\coH_{\dR,\rmid}^1(\Gm, \Sym^k\Kl_2)$ by admissibility. Moreover, under the condition $p>k$
(\resp \hbox{$p >\sfrac{k}{2}$})
if $k$ is odd (\resp even),
the two Frobenius modules are equal by Proposition \ref{Prop:p-adic_realization}, and hence both polygons have the same endpoints. We conclude the proof by putting the trivial factor back, which is one-dimensional with Frobenius and Hodge slopes $0$ if $k\not\equiv 0\bmod{4}$ and two-dimensional with Frobenius and Hodge slopes $0$ and $\sfrac{k}{2}$
otherwise.
\end{proof}

\begin{remark} Writing $Z_k(p; T)=\sum c_n T^n$,
the above corollary implies in particular that the inequality
\begin{equation}\label{eqn:NewtonaboveHodge}
v_p(c_n) \geq n(n-1)
\end{equation} holds for all $p \geq 3$. This sharpens a theorem of Haessig \cite[Th.\,1.1]{Haessig17}, who obtained the lower bound $\left( 1 - \sfrac{1}{(p-1)} \right) n(n-1)$ for all $p \geq 5$ using $p$-adic analysis \`a la Dwork. While this paper was being refereed, he managed to prove \eqref{eqn:NewtonaboveHodge} for all $p \geq 2$ by strengthening his previous arguments \cite{Haessig20}.
As we explain in Remark \ref{rmk:l=p_comparison} \emph{infra}, it is also possible to obtain this lower bound in all cases except for $p=2$ and even $k$ from the potential automorphy of the motive~$\Motive_k$. Note that, when $k$ is even, the Hodge polygon of $\coH_{\dR,\cp}^1(\Gm, \Sym^k\Kl_2)$
lies strictly above the polygon with vertices $n(n-1)$, as Figure \ref{figure:Newton_polygon} shows.
\begin{figure}[htb]
\begin{center}
\begin{tikzpicture}[yscale=.5]
\draw (0,0) -- (1,0) -- (2,2) -- (3,6) -- (4,12);
\node[below] at (0,0) {$0$};
\node[below] at (4,0) {$4$};
\node[below] at (2,-1) {$k=7$};
\node[right] at (4,6) {$6$};
\node[right] at (4,12) {$12$};
\draw[help lines] (0,0) grid (4,13);

\draw (-4,0) -- (-3,0) -- (-2,2) -- (-1,7);
\node[below] at (-4,0) {$0$};
\node[below] at (-1,0) {$3$};
\node[below] at (-2.5,-1) {$k=6$};
\node[right] at (-1,7) {$7$};
\draw[help lines] (-4,0) grid (-1,13);

\draw (5,0) -- (6,0) -- (7,2) -- (8,6) -- (9,13);
\node[below] at (5,0) {$0$};
\node[below] at (9,0) {$4$};
\node[right] at (9,6) {$6$};
\node[right] at (9,13) {$13$};
\node[below] at (7,-1) {$k=8$};
\draw[help lines] (5,0) grid (9,13);
\end{tikzpicture}
\caption{The Hodge polygons of
$\coH_{\dR,\cp}^1(\Gm, \Sym^k\Kl_2)$}
\label{figure:Newton_polygon}
\end{center}
\end{figure}
\end{remark}

\subsection{The gamma factor}\label{Sect:periods}

We first recall Serre's recipe \cite[\S 3]{Serre70}
describing the conjectural shape of the gamma factor at infinity in the complete $L$-function of a pure motive over $\QQ$.
Let $V$ be a finite-dimensional vector space over $\CC$
together with an $\RR$-Hodge decomposition
of weight~$w$,
i.e.,
the data of a grading
$V = \bigoplus_{p\in\ZZ} V^p$ and a $\CC$-linear involution $\sigma$ of $V$
such that $\sigma(V^p) = V^{w-p}$.
Given an $\RR$-Hodge decomposition, we set $h(p) = \dim_{\CC} V^p$
and
\begin{displaymath}
h(\sfrac{w}{2})^{\pm} =\dim_{\CC} \left\{ v \in V^{w/2} \,|\, \sigma(v) = \pm(-1)^{w/2}v \right\}
\end{displaymath}
if $w$ is even and $h(\sfrac{w}{2})^{\pm}=0$ otherwise. Setting
\[
\Gamma_\RR(s) = \pi^{-s/2}\Gamma(s/2),
\quad
\Gamma_\CC(s) = 2(2\pi)^{-s}\Gamma(s)
	= \Gamma_\RR(s)\Gamma_\RR(s+1),
\]
the \textit{gamma factor} $\Gamma_V(s)$
of $V$ is defined as
\[
\Gamma_V(s) =
\Gamma_\RR(s-w/2)^{h(w/2)^+}
\Gamma_\RR(s-w/2+1)^{h(w/2)^-}
\prod_{p<w/2} \Gamma_\CC(s-p)^{h(p)}.
\]

\begin{cor}\label{Cor:Gamma-factor}
For each integer $k \geq 1$, the gamma factor of the motive
$\Motive_k$ is equal to
\[
L_k(\infty,s) =
\pi^{-m\sfrac{s}{2}} \prod_{j=1}^m \Gamma\Bigl(\dfrac{s-j}{2} \Bigr),
\quad
m = \flr{\frac{k-1}{2}} - \delta_{4\ZZ}(k).
\]
\end{cor}

\begin{proof}

In our geometric setting, the grading is given by
\begin{displaymath}
V^p=\gr_F^p \coH_{\dR,\rmid}^1(\Gm, \Sym^k\Kl_2)
\end{displaymath}
and the $\RR$-structure comes from the maps $\sigma$ induced by complex conjugation \hbox{$\KM(\CC) \to \KM(\CC)$} on the singular cohomology
$\coH^{k-1}(\KM(\CC))$
and the singular cohomology with compact support $\coH^{k-1}_\cp(\KM(\CC))$, \cf \cite[\S 3.3(b)]{Serre70}.
These form an $\RR$-Hodge decomposition
of weight~$w = k+1$.

Observe that the middle degree factor $V^{w/2}$ is non-trivial
if and only if $k = 4r + 3$ for some integer $r \geq 0$, in which case the weight is $w = 4r+4$ and $V^{w/2}$ has dimension one. Assuming this, let $\varepsilon \in \{\pm 1\}$ denote the sign of the action of $\sigma$ on $V^{w/2}$. Since $V$ has dimension $2r+1$
and $\sigma$ interchanges~$V^p$ and $V^{w-p}$,
the equality $\det\sigma = (-1)^r\epsilon$
holds in $\det \coH_{\dR,\rmid}^1(\Gm, \Sym^k\Kl_2)$.
Therefore, it suffices to compute $\det \sigma$.
Thanks to the orthogonal pairing~\eqref{eq:pairing-motive}, the above determinant is, up to a twist, the de~Rham realization of the rank-one Artin motive associated with a quadratic field extension of~$\QQ$
and one only needs to decide whether this field is real or imaginary.
To do so, we look at the $\ell$-adic representation $r_{k, \ell} \colon \Gal(\ol\QQ/\QQ) \to \GL(V_{k, \ell})$. For each odd prime~$p$, the determinant of Frobenius was computed in \cite[Th.\,0.1]{FW10}:
\begin{displaymath}
\det(F_p | \coH_{\et,\rmid}^1(\Gmolf, \Sym^k\Kl_2))=p^{\sfrac{(k+1)\dim \coH_{\rmid}^1}{2}} \Bigl(\frac{2}{p}\Bigr)^{\flr{\sfrac{k}{2p}+\sfrac{1}{2}}} \!\!\prod_{\substack{0 \leq j \leq \sfrac{(k-1)}{2} \\ p \nmid 2j+1}}\! \Bigl(\frac{(-1)^j(2j+1)}{p} \Bigr).
\end{displaymath}
From this we immediately derive that, for all primes $p > k$, the equality
\begin{displaymath}
\det(r_{k, \ell}(\mathrm{Frob}_p))=\Bigl(\frac{(-3) \cdot 5 \cdots (-1)^{\sfrac{(k-1)}{2}} k }{p}\Bigr)p^{\sfrac{(k^2-1)}{4}}=\Bigl(\frac{p}{k!!} \Bigr)p^{\sfrac{(k^2-1)}{4}}
\end{displaymath}
holds, with $k!!=3 \cdot 5 \cdots k$. Chebotarev's density theorem then yields
$\det r_{k, \ell}=(\sfrac{\cdot}{k!!}) \chi_{\mathrm{cyc}}^{\psfrac{1-k^2}{4}}$.
It follows that the quadratic number field to which this character gives rises through class field theory is equal to $\QQ(\sqrt{\pm k!!}),$ with sign adjusted by the condition that the radicand is congruent to $1$ modulo $4$ (otherwise, $2$ would be a ramified prime).
Noting that $k=4r+3$,
this sign is given by $(-1)^{r+1}$ and the power of the cyclotomic character appearing in $\det r_{k, \ell}$ is even. Putting everything together, one derives $\epsilon = -1 = -(-1)^{w/2}$, and hence $h(w/2)^+ = 0$ and $h(w/2)^- = 1$, which was the missing information to compute the gamma factor.
\end{proof}

\subsection{Potential automorphy, meromorphic continuation, and functional equation}

In this final section, we pull everything together to prove Theorems \ref{thm:intro-k-odd} and \ref{thm:intro-k-even} from the introduction. We first compute the $\epsilon$-factors of the Galois representations $V_{k, \ell}$
and recall the particular case of the theorem of Patrikis and Taylor
that will imply potential automorphy.

\subsubsection{Weil-Deligne representations and $\epsilon$-factors}
\label{subsec:epsilon}
For each integer $k \geq 1$, consider the system~$\{V_{k,\ell}\}_\ell$
of $\ell$-adic realizations of the motive $\Motive_k$. We investigate its global $\epsilon$\nobreakdash-factor $\epsilon_k(s)$ by means of the information obtained in Theorems \ref{Thm:etale_k_odd} and \ref{Thm:etale_k_even}.
We refer the reader to \cite{Taylor} for an accessible introduction to Weil-Deligne representations.

As inputs for defining the local $\epsilon$-factor of $\{V_{k, \ell}\}$
at each place $p$ of $\QQ$,
we fix the additive character~$\psi$ and the Haar measure $\de x$
on $\QQ_p$ as follows.
If $p<\infty$, then
$\psi$ is the composition
\begin{displaymath}
\QQ_p \to \QQ_p/\ZZ_p = \ZZ[\sfrac{1}{p}]/\ZZ \to \CC^\times,
\end{displaymath}
where the first map is the quotient and the second map sends $\alpha$ to $\exp(2\pi \sfi\alpha)$. The Haar measure
$\de x$ is normalized so that $\int_{\ZZ_p} \de x = 1$ holds; note that it is self-dual with respect to~$\psi$.
For $p = \infty$,
we set $\psi(\alpha) = \exp(-2\pi \sfi\alpha)$ for $\alpha\in\RR$,
and we take as $\de x$ the usual Lebesgue measure.
Letting $\mathbf{A}_\QQ$ denote the adele ring of $\QQ$, these local characters and Haar measures
are compatible in the sense that the product of the $\psi$'s induces a character of $\mathbf{A}_\QQ/\QQ$
and the compact quotient
$\mathbf{A}_\QQ/\QQ$ has volume~$1$
with respect to the induced measure \cite[\S 3.10]{Del73}.

For each $p < \infty$, let $W(\ol\QQ_p/\QQ_p)$ be the Weil group of $\QQ_p$, \ie the subgroup of $\Gal(\ol\QQ_p/\QQ_p)$ consisting of those elements whose image in $\Gal(\ol\FF_p / \FF_p)$ is an integral power of Frobenius together with the topology making $I_p$ with its usual topology into an open subgroup, and let~$F_p \in W(\ol\QQ_p/\QQ_p)$ be a lifting of the geometric Frobenius. Local class field theory provides an isomorphism between $\QQ_p^\times$ and the maximal abelian quotient $W(\ol\QQ_p/\QQ_p)^\ab$; following the convention of \cite[\S 2.3]{Del73},
we normalize it so that $p$ is mapped to $F_p$. For $s \in \CC$,
let
\[
\omega_s\colon W(\ol\QQ_p/\QQ_p) \to \CC^\times
\]
be the homomorphism defined by composition of the quotient map to
$W(\ol\QQ_p/\QQ_p)^\ab \cong \QQ_p^\times$ with the map from $\QQ_p^\times$ to $\CC^\times$ sending $\alpha$ to $\|\alpha\|^s$, with \hbox{$\|p\| = 1/p$.}

With a continuous representation~$\rho$ of $W(\ol\QQ_p/\QQ_p)$
on a discrete topological vector space~$V$
over a field of characteristic zero, to which we shall refer as a \textit{Weil representation},
is associated
a local $\epsilon$-factor $\epsilon_0(\rho,s)=\epsilon_0(\rho\cdot\omega_s,0)$,
depending on $\psi$ and $\de x$, in \cite[Th.\,4.1]{Del73}. By (5.5.2) in \loccit, the following equality holds:
\begin{equation}\label{eq:eps_s}
\epsilon_0(\rho,s) = \omega_s(p^{a(\rho)}) \cdot \epsilon_0(\rho,0)
= p^{-a(\rho)s} \cdot \epsilon_0(\rho,0),
\end{equation}
where $a(\rho)$ denotes the conductor of $\rho$
and we regard $\omega_s$ as a map
$\QQ_p^\times \cong W(\ol\QQ_p/\QQ_p)^\ab \to \CC^\times$.

A \textit{Weil-Deligne representation} $(\rho, \rN)$ on $V$ consists of a Weil representation $\rho$ on $V$ as above
and a nilpotent endomorphism $\rN$, called the \textit{logarithm of the unipotent part
of the local monodromy}, such that the equality $\rho(w)\rN\rho(w)^{-1}=p^{-v(w)}\rN$ holds for all $w \in W(\ol \QQ_p / \QQ_p)$, where~$v(w)$ denotes the power of $F_p$ to which $w$ is mapped in $\Gal(\ol\FF_p/\FF_p)$. There is a canonical way to attach a Weil-Deligne representation to an $\ell$-adic representation $r$ of~$W(\ol\QQ_p / \QQ_p)$: by Grothendieck's quasi-unipotency theorem, there exists a unique nilpotent endomorphism~$\rN$ satisfying $r(\sigma)=\exp(t_\ell(\sigma)\rN)$
for all $\sigma$ in a finite index subgroup of $I_p$, and one~sets
\begin{equation}\label{eq:Weilreps}
\rho(\sigma F_p^n)=r(\sigma F_p^n)\exp(-t_\ell(\sigma)\rN)
\end{equation} for all $\sigma \in I_p$ and all $n \in \ZZ$, \cf \cite[\S 8.4]{Del73}. Setting
\[
\epsilon_1((\rho,\rN),s)
= \det\bigl(-p^{-s}F_p \mid V^{\rho(I_p)}/\ker(\rN)^{\rho(I_p)}\bigr),
\]
the local $\epsilon$-factor of the Weil-Deligne representation $(\rho, \rN)$ defined in \cite[above Rem.\,5.2.1]{Del79} is equal to the product
\begin{equation}\label{eq:eps_WD}
\epsilon((\rho,\rN),s) = \epsilon_0(\rho,s) \cdot \epsilon_1((\rho,\rN),s).
\end{equation}
Notice the equality $V^{r(I_p)} = \ker(\rN)^{\rho(I_p)}$ from \cite[\S 8.12]{Del73}.

Let $\ell$ be a prime number distinct from $p$.
For $s \in \ZZ$,
we also regard $\omega_s$ as a homomorphism to $\QQ_\ell^\times$. We consider the Weil-Deligne representation $(\rho, \rN)$ on $V_{k,\ell}$ corresponding to the $\ell$\nobreakdash-adic representation $V_{k,\ell}$
of $\Gal(\ol\QQ_p/\QQ_p)$ and denote by $\epsilon_k(p,s) = \epsilon((\rho, \rN),s)$ its $\epsilon$-factor.

Suppose $k$ is odd.
For $2<p<\infty$,
the representation $V_{k,\ell}$ of the inertia group $I_p$
is tame and factors through characters of subgroups of index at most two
by Theorem~\ref{Thm:etale_k_odd}.
The associated Weil-Deligne representation $(\rho, \rN)$
has thus $\rN=0$ and $\rho$ equals the restriction of~$V_{k,\ell}$ to $W(\ol\QQ_p/\QQ_p)$, so that the equality $\epsilon_k(p,s) = \epsilon_0(\rho,s)$ holds in this case. By definition (\cf \cite[(4.5.4)]{Del73}), the conductor of $\rho$ is given by
\begin{displaymath}
a(\rho) = \dim V_{k,\ell} - \dim V_{k,\ell}^{\rho(I_p)} = |\Theta_p^-|
\end{displaymath}
and from the ``formulaire'' in \loccit~we find
\begin{equation}\label{eq:eps^2}
\begin{aligned}
1 &= \epsilon_0(\rho,0)\cdot\epsilon_0(\rho^\vee\cdot\omega_1,0)\cdot \det(\rho)(-1)
	&&&& \text{by \cite[(5.4), (5.7.1)]{Del73}} \\
&= \epsilon_0(\rho,0)\cdot\epsilon_0(\rho\cdot\omega_{k+2},0)\cdot \det(\rho)(-1)
	&&&& \text{since $V_{k,\ell}^\vee = V_{k,\ell}(k+1)$} \\
&= \epsilon_0(\rho,0)^2\cdot (p^{|\Theta_p^-|})^{-(k+2)} \cdot \det(\rho)(-1)
	&&&& \text{by \eqref{eq:eps_s}}.
\end{aligned}
\end{equation} Recall from the proof of Corollary~\ref{Cor:Gamma-factor} that $\det(\rho)$ is the non-trivial character
associated with the quadratic extension $\QQ_p(\sqrt{\pm k!!})$
with positive sign if $k \equiv 1, 7 \bmod{8}$ and negative sign otherwise. Therefore, $\det(\rho)(-1)$ is given by the Hilbert symbol $(-1, \pm k!!)$ and there exists a fourth root of unity $w_p \in \mu_4(\CC)$ with $w_p^2=(-1, \pm k!!)$ such that
\[
\epsilon_k(p,s) = w_p \cdot (p^{|\Theta_p^-|})^{\psfrac{k+2}{2}-s}.
\]
Moreover, if $V_{k,\ell}$ is unramified, then the equality $\epsilon_k(p,s) = 1$ holds (recall that this includes the case $p=2$).
According to~\cite[\S 5.3]{Del79}, at $p = \infty$ the associated $\epsilon$-factor $\epsilon_k(\infty,s)$ is given, in the notation of Section \ref{Sect:periods} \textit{supra}, by $\sfi=\sqrt{-1}$ to the power
\begin{displaymath}
\sum_{p<q}(q-p+1)h(p)+h(\sfrac{(k+1)}{2})^{-}=\frac{k^2-1}{8}.
\end{displaymath}
Now the product formula for Hilbert symbols implies that $\varepsilon_k(\infty, s)\cdot \prod_{p<\infty} w_p$ belongs to $\{\pm 1\}$, and putting everything together we get
\[
\epsilon_k(s) = \prod_{p\leq \infty} \epsilon_k(p,s)
	= \pm \cond_k^{\psfrac{k+2}{2}-s},
\]
where $\cond_k$ is the integer defined in \eqref{eq:conductor_k_odd}.

\begin{remark}\label{rmk:eps_ss_k_odd}
It is obvious that in this case the $\epsilon$-factors remain unchanged
if one replaces the input $\{V_{k,\ell}\}_\ell$
with its semi-simplification $\{V_{k,\ell}^\ss\}_\ell$.
\end{remark}

We now suppose that $k$ is even and keep notation from Theorem~\ref{Thm:etale_k_even}.
If $2<p<\infty$,
there exist a basis $\{e_i\}$ of $U$
and elements $\{e_i'\}$ inducing a basis of $V_{k,\ell}/U^\perp$
satisfying $\ker(\rN) = U^\perp$
and $\rN(e_i') = -(-1)^{k/2}e_i$.
In this case, the inertia group acts trivially on the Weil-Deligne representation $\rho$ given by \eqref{eq:Weilreps}, hence the equalities $a(\rho) = 0$
and $\epsilon_0(\rho,s) = 1$.
From the identity $V_{k,\ell}^\vee = V_{k,\ell}(k+1)$
as representations of $\Gal(\ol\QQ_p/\QQ_p)$,
we derive $\det (\rho(F_p), V_{k,\ell})^2 = p^{m(k+1)}$, and since the duality pairing is symplectic the determinant has positive sign: $\det (\rho(F_p), V_{k,\ell})= p^{m\psfrac{k+1}{2}}$
(note that~$m = \dim V_{k,\ell}$ is even). Using the definition \eqref{eq:eps_WD} and Theorem~\ref{Thm:etale_k_even},
we obtain
\begin{equation}\label{eq:eps_no_2}
\epsilon_k(p,s) = (-1)^{v_p}\cdot p^{\flr{\sfrac{k}{2p}}\left(\psfrac{k+2}{2}-s\right)},
\quad v_p = \begin{cases}
\flr{\dfrac{k}{2p}} & \text{if }p \equiv 1 \bmod{4}, \\[10pt]
\flr{\dfrac{k}{4p}} & \text{if }p \equiv 3 \bmod{4}.
\end{cases}
\end{equation}
Besides, the computation of Hodge numbers yields
\[
\epsilon_k(\infty,s) = \left.\begin{cases}
1 & \text{if }k \equiv 2 \bmod{4}, \\
(-1)^{\psfrac{k-4}{4}} & \text{if }k \equiv 0 \bmod{4} \end{cases}\right\}
= (-1)^{\delta_{8\ZZ}(k)},
\]
from which we get the value of the global epsilon factor away from $p=2$:
\[
\prod_{2<p\leq \infty} \epsilon_k(p,s) = (-1)^{v'}{\cond'_k}^{\psfrac{k+2}{2}-s},
\quad
v' = \sum_{p\equiv 1 \,(4)} \flr{\frac{k}{2p}}
	+ \sum_{p\equiv 3 \,(4)} \flr{\frac{k}{4p}}
	+ \delta_{8\ZZ}(k).
\]

\begin{remark}
The factor $\epsilon_k(s)$ is of the form $AB^{-s}$
since all its local factors are.
Suppose that there is a functional equation
\[
\wh{L}_k(s) = \epsilon_k(s) \cdot \wh{L}_k(k+2-s),
\quad
\wh{L}_k(s) = L_k(\infty,s) \cdot \prod_{p<\infty} L_k(p,p^{-s}).
\]
By applying it twice, we get $A^2 = B^{k+2}$.
Suppose $k$ is even and $p=2$, and
let $a = a(\rho)$ be the conductor
of the associated Weil-Deligne representation.
The same computation as in \eqref{eq:eps^2} gives
$|\epsilon_0(\rho,0)| = 2^{a\psfrac{k+2}{2}}$.
On the other hand,
suppose the quotient $V^{\rho(I_2)}/\ker(\rN)^{\rho(I_2)}$
has dimension $r$ and that $\det(F_2)$ acts as $\delta$.
Plugging this into \eqref{eq:eps_WD}, we obtain
\[
\epsilon_k(2,s) = w''|\delta|\,2^{a\psfrac{k+2}{2}}\, 2^{-(a+r)s}
\]
for some $|w''| = 1$.
Under the assumption that the functional equation holds
with \eqref{eq:eps_no_2} we get $|\delta| = 2^{r\psfrac{k+2}{2}}$, and hence the equality
\[
\epsilon_k(s) = w\cdot (2^{a+r}\cond_k')^{\psfrac{k+2}{2}-s},
\quad
w = (-1)^{v'}w''.
\]
Based on the numerical data, the equality $a+r = \flr{\sfrac{k}{6}}$, which is also
the exponent of $3$ in~$\cond_k'$, is conjectured in \cite{Broadhurst17}.
It is further conjectured that $w'' = (-1)^{v''}$
with $v'' = \flr{\sfrac{k}{8}}$.
One possible structure fitting these data would be that
$V_{k,\ell}$ is tamely ramified at $2$, that the inertia group acts trivially on the associated Weil-Deligne representation, and that the reciprocal characteristic polynomial of Frobenius is
\[
\det\left( 1-F_2T \mid V_{k,\ell}^{I_2} \right) =
\bigl( 1- 2^{k/2}T \bigr)^{\flr{\sfrac{k}{8}}} \bigl( 1+2^{k/2}T \bigr)^{b_k}
M_k(2; T)
\]
of degree $\psfrac{k-2}{2}-\delta_{4\ZZ}(k)-\flr{\sfrac{k}{6}},$
where $b_k$ and $M_k(2; T)$ are defined in \eqref{eq:Z_k_2_T}.
\end{remark}

\subsubsection{The theorem of Patrikis-Taylor}

Let $m \geq 1$ be an integer and $S$ a finite set of prime numbers. We consider a \textit{weakly compatible} system of continuous semi-simple representations
\begin{displaymath}
r_\ell \colon \Gal(\ol \QQ / \QQ) \longrightarrow \GL_m(\ol \QQ_\ell)
\end{displaymath}
with $\ell$ running over all prime numbers.
The notion of being ``weakly compatible'' is borrowed from~\hbox{\cite[5.1]{BGGT}} and means that the following three conditions hold:
\begin{itemize}
\item if $p \notin S$, then for all $\ell \neq p$ the representation $r_\ell$ is unramified at $p$ and the characteristic polynomial of $r_\ell(\mathrm{Frob}_p)$ lies in $\QQ[T]$ and is independent of $\ell$;

\item each representation $r_\ell$ is de Rham and in fact crystalline if $\ell \notin S$;

\item the Hodge-Tate weights of $r_\ell$ are independent of $\ell$.
\end{itemize}

\begin{thm}[Patrikis-Taylor, {\cite[Th.\,A]{PT15}}]\label{thm:Patrikis-Taylor}
Let $\mathcal{R}=\{r_\ell\}$ be a weakly compatible system that satisfies the following three properties:
\begin{itemize}
\item (Purity) There exists an integer $w$ such that, for each prime $p \notin S$, the roots of the common characteristic polynomial of $r_\ell(\mathrm{Frob}_p)$ are Weil numbers of weight $w$.

\item (Regularity) The representation $r_\ell$ has $m$ distinct Hodge-Tate weights.

\item (Odd essential self-duality) Either each $r_\ell$ factors through a map to $\mathrm{GO}_m(\ol \QQ_\ell)$ with even similitude character or each $r_\ell$ factors through a map to $\mathrm{GSp}_m(\ol \QQ_\ell)$ with odd similitude character.
Moreover, these characters form a weakly compatible system.
\end{itemize} Then there exists a finite, Galois, totally real number field over which all of the $r_\ell$ become automorphic.
In particular, the partial $L$-function
\begin{displaymath}
L^S(\mathcal{R}, s)=\prod_{p \notin S} \det(1-r_\ell(\mathrm{Frob}_p)p^{-s})^{-1}
\end{displaymath}
admits a meromorphic continuation to the complex plane.
\end{thm}

Let $p$ and $\ell$ be distinct prime numbers
and let $(\rho, \rN)$ be
the Weil-Deligne representation on~$V \simeq \ol\QQ_\ell^m$
associated with $r_\ell$.
There is a unique increasing \textit{monodromy filtration}
\hbox{$V^{\leq\bbullet} \subset V$}
attached to the nilpotent endomorphism $\rN$
such that the inclusion $\rN V^{\leq a} \subset V^{\leq a-2}$ holds
for each integer $a$ and that the map $V^{\leq a}/V^{\leq a-1} \to V^{\leq -a}/V^{\leq -a-1}$
induced by $\rN^a$ is an isomorphism for all $a\geq 0$.
Recall that $(\rho, \rN)$ is called \textit{pure} of weight $w$
if the eigenvalues of Frobenius~$F_p$ acting on
$V^{\leq a}/V^{\leq a-1}$ are $p$-Weil numbers of weight $w+a$ for all $a$.
Building on vast work on constructions of Galois representations
attached to automorphic representations,
which is partly summarized in~\hbox{\cite[Th.\,2.1.1]{BGGT},}
Theorem \ref{thm:Patrikis-Taylor} implies the following:

\begin{cor}[{\cite[Cor.\,2.2\,(ii)]{PT15}}]\label{cor:automorphy_purity}
Let $\mathcal{R}=\{r_\ell\}$ be a weakly compatible system
that is pure of weight $w$, regular, and odd essential self-dual. Then, for any distinct primes $p$ and $\ell$,
the Weil-Deligne representation $\mathrm{WD}_p(\mathcal{R})$ of $\Gal(\ol\QQ_p/\QQ_p)$
associated with $r_\ell$ is pure of weight $w$. Moreover, with the notation of \cite[5.1]{BGGT}, the completed $L$-function
\begin{displaymath}
\Lambda(\mathcal{R}, s)=L_\infty(\mathcal{R}, s) \cdot \prod_{p \in S} L(\mathrm{WD}_p(\mathcal{R}), s) \cdot L^S(\mathcal{R}, s)
\end{displaymath}
satisfies the functional equation $\Lambda(\mathcal{R}, s)=\varepsilon(\mathcal{R}, s) \Lambda(\mathcal{R}^\vee, 1-s)$.
\end{cor}

\subsubsection{Proof of Theorems \ref{thm:intro-k-odd} and \ref{thm:intro-k-even}} For each integer $k \geq 1$, the $\ell$-adic representations
\begin{displaymath}
r_{k, \ell} \colon \Gal(\ol\QQ / \QQ) \longrightarrow
\GL(V_{k, \ell}\otimes\ol\QQ_\ell) \simeq \GL_m(\ol\QQ_\ell)
\end{displaymath}
are pure of weight $k+1$.
After choosing an embedding $\QQ_p\hookrightarrow\CC$,
we get a filtered isomorphism
\begin{align*}
(V_{k,p}\otimes\BB_\dR)^{\Gal(\ol\QQ_p/\QQ_p)}\otimes\CC
&= (\gr^W_{k-1}\coH_{\et,\cp}^{k-1}(\KM_{\ol\QQ_p},\QQ_p)^{\symgp_k\times\mu_2,\chi}
	\otimes\BB_\dR)^{\Gal(\ol\QQ_p/\QQ_p)}(-1)\otimes\CC \\
&= \gr^W_{k-1}\coH_{\dR,\cp}^{k-1}(\KM_{\QQ_p})^{\symgp_k\times\mu_2,\chi}(-1)\otimes\CC \\
&\cong \coH_{\dR,\rmid}^1(\Gm,\Sym^k\Kl_2)
\end{align*}
by the $p$-adic comparison theorem. By definition, the Hodge-Tate weights of $V_{k, p}$ are those integers $a$
such that the graded piece
$\gr_F^a\coH_{\dR,\rmid}^1(\Gm,\Sym^k\Kl_2)$ is non-zero, counted with multiplicity its dimension. As the Hodge numbers are either $0$ or $1$ by Theorem~\ref{th:main}, the system $\{r_{k,\ell}\}$ is regular. Besides, the existence of the $(-1)^{k+1}$\nobreakdash-symmetric perfect pairing~\eqref{eq:pairing-motive} implies that the $r_{k, \ell}$ factor through $\mathrm{GO}_m(\ol\QQ_\ell)$ (\resp $\mathrm{GSp}_m(\ol\QQ_\ell)$) if $k$ is odd (\resp even) with similitude character~$\chi_{\mathrm{cyc}}^{-k-1}$.
Choose a basis of $\QQ_\ell(-k-1)$
and regard the perfect pairing
\hbox{$V_{k,\ell}\times V_{k,\ell} \to \QQ_\ell(-k-1)$}
as a compatible non-degenerate bilinear form
on the module $V_{k,\ell}$ over the group ring
$\QQ_\ell[\Gal(\ol\QQ/\QQ)]$
with the involution $g \mapsto \chi_{\mathrm{cyc}}^{-k-1}(g)g^{-1}$.
By \cite[Th.\,4.2.1]{Serre18}, the semi-simplification
$r_{k,\ell}^\ss$ also factors through
$\mathrm{GO}_m(\ol\QQ_\ell)$ (\resp $\mathrm{GSp}_m(\ol\QQ_\ell)$)
with similitude character $\chi_{\mathrm{cyc}}^{-k-1}$.
Moreover, $r_{k,\ell}^\ss$ is de Rham at all primes $\ell$
and crystalline if~$\ell > k$
by Proposition~\ref{Prop:p-adic_realization}. Therefore, the $r_{k,\ell}^\ss$ form a weakly compatible system satisfying the assumptions of
the theorem of Patrikis and Taylor and the partial $L$\nobreakdash-function of $\{r_{k,\ell}^\ss\}$
has meromorphic continuation and satisfies the expected automorphic functional equation.

We now show that the $L$-function and the $\epsilon$-factor
of $\{r_{k,\ell}^\ss\}$ coincide with those of $\{r_{k,\ell}\}$.
For odd $k$, this was the content of Remarks \ref{rmk:L_ss_k_odd} and \ref{rmk:eps_ss_k_odd}. For even $k$, we rely on the following lemma, which is certainly well-known to experts. (We thank one of the referees for suggesting the statement and sketching a proof, which we include for lack of an appropriate reference.)

\begin{lemma}
Let $p$ and $\ell$ be distinct prime numbers,
and $r\colon \Gal(\ol\QQ_p/\QQ_p) \to \GL(V)$
an $\ell$-adic representation.
Suppose that there exists a sequence
\[
0= V_0 \subset V_1 \subset \cdots \subset V_c = V
\]
of $\Gal(\ol\QQ_p/\QQ_p)$-stable subspaces
such that the Weil-Deligne representation associated with the induced representation $\bar{r}$
on $\ol{V} = \bigoplus_{i=1}^c V_i/V_{i-1}$ is pure.
Then the Weil-Deligne representation associated with $r$ is pure as well,
and $r$ and $\bar{r}$
have the same $L$ and $\epsilon$-factors.
\end{lemma}

\begin{proof} Let $(\rho, \rN)$ and $(\bar{\rho}, \ol{\rN})$ be the Weil-Deligne representations associated with $r$ and $\bar r$.
By its defining properties (\cf \cite[Th.4.1(1)]{Del73}), the factor $\varepsilon_0(\rho, s)$ in \eqref{eq:eps_WD} only depends on the semi-simplification of~$r$. Set $\varphi=\rho(F_p)$. It follows from the relation $\rN\varphi=p\varphi \rN$ that, if $\alpha$ is an eigenvalue of $\varphi$ acting on $V^{\leq a}/V^{\leq a-1}$ with multiplicity $\mu$, then $\alpha p^{-b}$ is an eigenvalue of $\varphi$ acting on $V^{\leq a-2b}/V^{\leq a-2b-1}$ with multiplicity at least $\mu$ for all $0\leq b\leq a$. Besides, by the uniqueness of the monodromy of $\ell$-adic representations in Grothendieck's quasi-unipotency theorem, $\rN$ restricts to the logarithm on each $V_i$, and hence induces the monodromy $\ol{\rN}$ on~$\ol{V}$. Since $(\bar\rho,\ol{\rN})$ is pure and $\det(1-\varphi T) = \det(1-\bar\varphi T)$, we conclude that the monodromy filtrations on $(\rho, \rN)$ and $(\bar\rho,\ol{\rN})$ have the same dimension on each graded piece and that $(\rho, \rN)$ is pure as well. In addition, the two Weil representations
$\rho|_{\ker(\rN)}$ and $\bar\rho|_{\ker(\ol{\rN})}$
have the same semi-simplification. Finally, observe that taking $I_p$-invariants is exact on Weil representations
since~$I_p$ acts through finite quotients, hence the identities
\begin{align*}
\det\big(1-\varphi T \mid \ker(\rN)^{\rho(I_p)}\big)
&= \det\big(1-\bar\varphi T \mid \ker(\ol{\rN})^{\bar\rho(I_p)}\big), \\
\det\big(\varphi T \mid V^{\rho(I_p)}/\ker(\rN)^{\rho(I_p)}\big)
&= \det\big(\bar\varphi T \mid \ol{V}^{\bar\rho(I_p)}/\ker(\ol{\rN})^{\bar\rho(I_p)}\big),
\end{align*} from which the equality of the $L$ and the $\epsilon$-factors follows.
\end{proof}

The discussion of paragraph \ref{subsec:epsilon} \textit{supra} then implies that this functional equation is, up to sign, precisely the one from Theorems \ref{thm:intro-k-odd} and \ref{thm:intro-k-even}. To conclude, we need to show that the sign is always positive for odd $k$; for this we use T. Saito's result~\cite{Saito} that the sign of the functional equation of the $L$-function of an orthogonal motive of even weight is always positive. \qed

\begin{remark}\label{rmk:l=p_comparison}
As explained in \cite[Cor.\,2.2\,(ii)]{PT15}, another consequence of the potential automorphy of the weakly compatible system $\{r_{k,\ell}\}_\ell$ is that it is indeed \textit{strictly} compatible (see~\hbox{\cite[p.\,214]{PT15}} for this notion). Given a prime $p$, this allows one to transfer some properties of the $\ell$-adic representations $r_{k,\ell}$ for $\ell \neq p$, to the $p$-adic representation $r_{k,p}$ of~$\Gal(\ol\QQ_p/\QQ_p)$, since their associated Weil-Deligne representations $\rho_{k, \ell}$ and $\rho_{k, p}$ are isomorphic up to semi\nobreakdash-simplification.
(See e.g., \cite[\S 1]{Taylor}
for the construction of the Weil-Deligne representation
associated with a $p$-adic de Rham representation.)
In particular, the representation $r_{k,p}$ is semistable over $L=\QQ_p$ if $k$ is even and $L=\QQ_p(\sqrt{-p})$ if $k$ is odd since $\rho_{k,\ell}(I_L) = \{1\}$, where $I_L \subset\Gal(\ol\QQ_p/L)$ denotes the inertia group (see Section \ref{subsec:epsilon}). Moreover, if $k$ is odd, then $\rN=0$, and hence $r_{k,p}$ is indeed crystalline over $L$. This strengthens the statement of Proposition \ref{Prop:p-adic_realization}, where semi-stability was only proved over $\QQ_p(\sqrt{-p})$. Besides,
we have shown in
\eqref{eq:bad-factors-odd-case} and \eqref{eq:local-factors-even}
that the polynomial $M_k(p;T)$ is a factor of
$\det ( 1-F_pT \mid V_{k, \ell}^{I_p})$
for all primes $p$ if $k$ is odd and for all primes $p\neq 2$ if $k$ is even.
From the (weak) admissibility of~$r_{k,p}$ and the equalities
\[
\det \big( 1-F_pT \mid V_{k, \ell}^{I_p}\big)
= \det\big( 1-F_pT \mid \ker(\rN)^{\rho(I_p)}\big)
= \det\big( 1-\varphi T \mid (V_{k,p}\otimes\BB_\st)^{\Gal(\ol\QQ_p/L)}\big),
\]
it follows that the $p$-adic Newton polygon of $M_k(p;T)$
lies above the Hodge polygon of~$\coH_{\dR,\rmid}^1(\Gm,\Sym^k\Kl_2)$. We then recover Corollary \ref{cor:Newton-Hodge}
by adding back the trivial factor,
including additionally the case of $p=2$ and odd $k$, which was not treated in \loccit
\end{remark}

\appendix
\section*{Appendix. Exponential mixed Hodge structures and\texorpdfstring{\\}{cr} irregular Hodge filtrations}
\refstepcounter{section}

In this appendix, we prove some of the theorems used in the main text concerning mixed Hodge structures obtained from exponential mixed Hodge structures. We start by recalling the necessary material on $\cD$-modules, mixed Hodge modules, and exponential mixed Hodge modules. Proposition \ref{prop:EMHSMHS} then provides us with a condition for getting a mixed Hodge structure from the irregular Hodge filtration, and a criterion for this condition to be satisfied is proved in Theorem \ref{th:EMHSMHStg}. Finally, Theorem \ref{th:MHSgeneral} gives a way to compute the corresponding Hodge and weight filtrations. As it is customary, we adopt the convention that filtrations with lower (\resp upper) indices are increasing (\resp decreasing), and we pass from increasing to decreasing filtrations by setting~\hbox{$\irrF^p=F^\irr_{-p}$ for any $p\in\QQ$,} and similarly for the ordinary Hodge filtration. In the theory of mixed Hodge modules one usually considers increasing filtrations, while the Hodge filtration of mixed Hodge structures is usually decreasing. We will make use of both conventions without further explanation.

\subsection{Notation and results from the theory of \texorpdfstring{$\cD$}{D}-modules}\label{subsec:notationD}

We refer the reader \eg to~\hbox{\cite{H-T-T08, Kashiwara03}} for this section, although the notation therein may be somewhat different.

Given an algebraic morphism $h\colon X\to Y$ between smooth complex algebraic varieties, we denote by $h_\bast$ (\resp $h^\bast$) the derived pushforward (\resp pullback) in the sense of $\cD$\nobreakdash-modules; that is, for a $\cD_X$-module or a bounded complex of $\cD_X$-modules (\resp $\cD_Y$-modules) $M$, we~set
\[
h_\bast M=\Rder h_*\Bigl(
\cD_{Y\leftarrow X}\overset{\mathrm{L}}{\otimes}_{\cD_X}M \Bigr)\quad (\resp h^\bast M=
\cD_{X\to Y}\overset{\mathrm{L}}{\otimes}_{h^{-1}\cD_Y}h^{-1}M).
\]
We denote by $\DR M$ the analytic de~Rham complex of $M$,
with $M^\an$ sitting in degree zero, and by $\pDR M$ the shifted complex $\DR M[\dim X]$, which is a perverse sheaf. We also denote by~$h_\bexc$ the adjoint by duality of~$h_\bast$ (that is, the functor $h_\bexc=\bD_Yh_\bast\bD_X$, where $\bD$ denotes the duality functor in the category of $\cD$-modules), so that there is a natural morphism $h_\bexc M\to h_\bast M$. In particular, given a holonomic~$\cD_X$\nobreakdash-module $M$ with regular singularities at infinity, the morphism \hbox{$h_\bexc M\to h_\bast M$} induces the natural morphism
$\Rder h_!\pDR M\to\Rder h_*\pDR M$ upon application of the shifted de~Rham functor and taking the isomorphisms $\pDR h_\bast M\simeq\Rder h_*\pDR M$ and~$\pDR h_\bexc M\simeq \Rder h_!\pDR M$ into account.

Given a $\cD_X$-module $M$ on a complex manifold (\resp smooth algebraic variety) $X$, we denote by $\coH^k_\dR(X,M)$ the hypercohomology in degree $k$ of the analytic (\resp algebraic) de~Rham complex of $M$. Note that here we do not shift the de~Rham complex as it is usually done in the theory of~$\cD$-modules. When dealing with an affine variety~$X$, we will identify algebraic~$\cD_X$\nobreakdash-modules with their global sections.

Let $j\colon U\hto X$ be the open embedding of the complement of a divisor $D$ on $X$. Given a holonomic $\cD_U$\nobreakdash-module $M$, the extension $j_\bast M$ is the holonomic $\cD_X$-module on which any local equation of $D$ acts in an invertible way. We denote by $j_\bea M$ the intermediate extension, defined as the maximal $\cD_X$-submodule of $j_\bast M$ that has no quotient supported on $D$. For a holonomic $\cD_U$-module $M$ with regular singularities at infinity, the inclusion $j_\bea M\to j_\bast M$ corresponds
to the natural morphism $j_{!{*}}\pDR M\to\Rder j_*\pDR M$ via the shifted de Rham functor.

If~$h\colon X \to Y$ is smooth, then $h^\bast$ sends holonomic $\cD_Y$-modules to holonomic $\cD_X$-modules. This functor corresponds to the usual pullback of vector bundles with connection. If $j\colon U\hto X$ is an open embedding, then $j^\bast$ is the usual restriction functor from holonomic $\cD_X$-modules to holonomic $\cD_U$-modules.

Instrumental for the theory of mixed Hodge modules \cite{MSaito87} is the notion of \emph{nearby cycle} and \emph{vanishing cycle functors} $\psi_f$ and $\phi_f$ along a function $f\colon X\to\Afu$ on the category of perverse sheaves and holonomic $\cD$-modules on~$X$. If $M$ is a holonomic $\cD_X$-module, then~$\psi_f M$ and~$\phi_fM$ are holonomic $\cD_X$\nobreakdash-modules supported on $f^{-1}(0)$, which are defined in terms of the \emph{Kashiwara\nobreakdash-Malgrange filtration} of~$M$. Both $\psi_f M$ and $\phi_fM$ are equipped with an automorphism $\mathrm T$ and decompose with respect to its eigenvalues. We denote by $\psi_{f,\lambda}M$ and $\phi_{f,\lambda}M$ the generalized eigencomponent corresponding to an eigenvalue $\lambda\in\CC^*$. If $\lambda\neq1$, there is an isomorphism $\psi_{f,\lambda}M\simeq\phi_{f,\lambda}M$ compatible with $\mathrm T$, whereas for $\lambda=1$ there is a quiver\vspace*{-5pt}
\[
\xymatrix{
\psi_{f,1}M\ar@/^1pc/[r]^-{\can}&\phi_{f,1}M\ar@/^1pc/[l]^-{\var}
}
\]
with the property that the maps $\exp(2\pi\sfi\var\circ\can)$ and $\exp(2\pi\sfi\can\circ\var)$ coincide with the unipotent automorphism $\mathrm T$ on $\psi_{f,1}M$ and $\phi_{f,1}M$ respectively. The nilpotent endomorphisms $\var\circ\can$ and $\can\circ\var$ on $\psi_{f,1}M$ and $\phi_{f,1}M$ are denoted by $\rN$. Up to a suitable shift, the nearby and vanishing cycles of a regular holonomic $\cD_X$-module $M$ correspond to the nearby and vanishing cycles of the perverse sheaf $\pDR M$.

Given a meromorphic function $\varphi\in\Gamma(X,\cO_X(*P))$ on $X$ with pole divisor $P$, we set
\begin{equation*}\label{eq:Evarphi}
E^\varphi=(\cO_X(*P),\rd+\rd\varphi),
\end{equation*}
and we denote by $\rme^\varphi$ the generator $1\in \cO_X(*P)$ of $E^\varphi$. Consider the product $\Afu_\tt\times\Afu_\taut$ of two affine lines with coordinates $\tt$ and $\taut$, and denote by~$p_\tt$ and $p_\taut$ the projections to the first and the second factors respectively. The \textit{Fourier transform} of a $\cD$\nobreakdash-module (or~a~bounded complex of $\cD$-modules)~$M$ on the affine line $\Afu_\taut$ is the complex defined as
\[
\FT_\taut M= p_{\tt\bast}(p_\taut^\bast M\otimes E^{\tt\taut}).
\]
If $M$ is a holonomic $\Cltaut$-module, then $\FT_\taut M$ has cohomology concentrated in degree zero (\ie is a holonomic $\Cltt$-module). This yields a functor $\FT_\taut \colon \catD^\rb_\hol(\cD_{\Afu_\taut})\to\catD^\rb_\hol(\cD_{\Afu_\tt})$. If~$M$ has regular singularities everywhere, including at infinity, then the only singularity of~$\FT_\taut M$ on $\Afu_\tt$ is the origin, which is also regular.

Let $s\colon\Afu\times\Afu\to\Afu$ denote the sum map. The \emph{additive $*$-convolution} $M_1\star_*M_2$ of $M_1$ and~$M_2$ is the object $s_+(M_1\boxtimes M_2)$ of $\catD^\rb_\hol(\cD_{\Afu})$. This operation is associative and corresponds to derived tensor product through Fourier transformation:
\[
\FT(M_1\star_*M_2)\simeq\FT M_1\overset{\mathrm{L}}\otimes\FT M_2=\delta^+(\FT M_1\boxtimes\FT M_2),
\]
where $\delta\colon \Afu \hto \Afu\times\Afu$ is the diagonal embedding. For holonomic $\cD$\nobreakdash-modules with regular singularities $M_1$ and $M_2$, the cohomologies in non-zero (\ie negative) degrees of the complex~\hbox{$\FT M_1\otimes^\mathrm{L}\FT M_2$} are supported at the origin of~$\Afu$, so the corresponding cohomologies of~$M_1\star_*M_2$ are constant $\cD_{\Afu}$-modules. We refer the reader to \cite[\S1.1]{D-S12} for details.

\subsection{Notation and results from the theory of mixed Hodge modules}\label{subsec:notaMHM}

Let $X$ be a smooth complex projective variety,
and let $M^\rH=(M,F_\bbullet M,\cF_\QQ, \alpha)$ be the data of a regular holonomic left $\cD_X$\nobreakdash-module $M$, an increasing good filtration $F_\bbullet M$ on $M$, a $\QQ$\nobreakdash-perverse sheaf~$\cF_\QQ$ on $X$, and an isomorphism $\alpha\colon\pDR M\isom\cF_\CC$. As it is customary, we shall omit~$\alpha$ from the notation. Let~$\Sp$ denote the Spencer complex of a right $\cD_X$-module. We say that $M^\rH$ is a \emph{pure polarizable Hodge module} of weight $w$ if the associated right filtered $\cD_X$\nobreakdash-mod\-ule
\[
(\omega_X\otimes_{\cO_X}M,\omega_X\otimes_{\cO_X}F_{\bbullet-\dim X}M),
\]
together with~$\cF_\QQ$ and the isomorphism \hbox{$\Sp(\omega_X\otimes M)=\pDR M\simeq\cF_\CC$} forms a pure polarizable Hodge module of weight $w$ in the sense of Saito \hbox{\cite[5.1.6\,\&\,5.2.10]{MSaito86}} (\cf also the introduction of \loccit). There is a similar definition for left \emph{mixed Hodge modules},
with the left-to-right correspondence
$W_\bbullet M\leftrightarrow \omega_X\otimes W_\bbullet M$
between weight filtrations. We refer the reader to \cite{MSaito87} for the properties of the category $\MHM(X)$ of \emph{algebraic mixed Hodge modules}, which are always assumed to be graded-polarizable. The derived category $\catD^\rb(\MHM(X))$ is endowed with a six-functors formalism, and we denote the functors with a lower left index~$\rH$ in order to keep the category in mind. For example, the pushforward functor $\Hm f_*$ by a morphism~$f$ for left mixed Hodge modules is obtained by composing the similar functor for mixed Hodge modules with the side-changing functor. There is a similar definition for other functors. In particular, additive $*$-convolution of holonomic $\cD_{\Afu}$-modules lifts to $\MHM(\Afu)$, see formula \eqref{eqn:additiveconvolutionMHM} below. Be aware that some of these functors may not correspond to the corresponding functors on the underlying $\cD$-modules: for example, $\Hm\otimes$ does not correspond to $\otimes^\mathrm{L}$ as used above.

Assume that $(M,F_\bbullet M)$ underlies a pure Hodge module $M^\rH$ of weight $w$ that is smooth (\ie~$M$ is a $\cO_X$-locally free module of finite rank with integrable connection~$\nabla$) and consider the associated decreasing filtration $F^pM=F_{-p}M$. Then $(M,\nabla,F^\bbullet M)$ is a polarizable variation of pure Hodge structures of weight $w-\dim X$.

Let $U$ be a smooth complex quasi-projective variety of dimension $d$. We denote by~$\pQQ_U^\rH$ the pure Hodge module of weight $d$ with underlying perverse sheaf $\pQQ_U=\QQ_U[d]$ and underlying filtered $\cD_U$-module $(\cO_U,F_\bbullet\cO_U)$ with $\gr_p^F\cO_U=0$ except for $p=0$ (it is denoted by $\QQ_U^\rH$ in~\cite{MSaito87}).

The duality functor $\Hm\bD$ is a contravariant anti t-exact involution on $\catD^\rb(\MHM(X))$, and hence preserves $\MHM(X)$. There is a natural isomorphism $\Hm\bD\,\pQQ_U^\rH\simeq\pQQ_U^\rH(-d)$.

Given a morphism $f\colon U\to\Afu_\ts$, there are functors
\[
\Hm f_*,\Hm f_!\colon \catD^\rb(\MHM(U))\to\catD^\rb(\MHM(\Afu)).
\]
For the open immersion $j\colon U\setminus D\hto U$ of the complement of a divisor $D$, the localization functor $\Hm j_*\,\Hm j^*$ is simply denoted by $[*D]$ (and the corresponding functor for $\cD$\nobreakdash-modules by~$(*D)$), while the dual localization functor $\Hm j_!\,\Hm j^*$ is denoted by $[!D]$
(and by~$(!D)$, respectively). If~\hbox{$i:D\hto U$} denotes the closed immersion, there are distinguished dual triangles in~$\catD^\rb(\MHM(U))$ (\cf\cite[(4.4.1)]{MSaito87}):
\begin{gather}\label{eq:exactseqloc1}
\Hm i_*\,\Hm i^! M^\rH\to M^\rH\to M^\rH[*D]\To{+1},\\
M^\rH[!D]\to M^\rH\to \Hm i_*\,\Hm i^* M^\rH\To{+1},\notag\label{eq:exactseqloc2}
\end{gather}
which, for $M^\rH\in\MHM(U)$, reduce to the dual exact sequences
\begin{gather*}
0\to\cH^0\Hm i_*\,\Hm i^! M^\rH\to M^\rH\to M^\rH[*D]\to \cH^1\Hm i_*\,\Hm i^! M^\rH\to0,\\
0\to\cH^{-1}\Hm i_*\,\Hm i^* M^\rH\to M^\rH[!D]\to M^\rH\to \cH^0\Hm i_*\,\Hm i^* M^\rH\to0.
\end{gather*}

We also make use of the nearby and vanishing cycle functors $\psi_f=\psi_{f,1}\oplus\psi_{f,\neq1}$ and $\phi_{f,1}$ on~$\MHM(U)$, together with the associated nilpotent operator $\rN$. If $f$ is smooth in the neighborhood of $f^{-1}(0)$, we regard them as taking values in $\MHM(f^{-1}(0))$.

\begin{example}\label{exem:MHSgeneral}
Set $U=\Afu$ and $f=\id$, and let $M^\rH$ be a mixed Hodge module on~$\Afu$. Since we are considering left modules, the convention for filtrations is that $\dim F^p\psi_fM=\rk F^pM$ and, if $0$ is not a singular point of $M$, then $W_\ell\psi_fM=\psi_fW_{\ell+1}M$.

If $M^\rH$ is pure, then $M^\rH=M_1^\rH \oplus M_2^\rH$ is the direct sum of a Hodge module $M_1^\rH$ whose underlying~$\cD_{\Afu}$\nobreakdash-module $M_1$ has no section supported at $0$ and a Hodge module $M_2^\rH$ supported at $0$. Then the following holds:
\[
\cH^0\Hm i_*\,\Hm i^! M_1^\rH=0,\quad \cH^1\Hm i_*\,\Hm i^! M_2^\rH=0,\quad \cH^0\Hm i_*\,\Hm i^! M_2^\rH\simeq \Hm i_*\,\Hm i^! M_2^\rH.
\]
Moreover, $\cH^1\Hm i_*\,\Hm i^! M_1^\rH$ is isomorphic to the mixed Hodge structure
\[
\coker\rN_\taut\colon\psi_{\taut,1}M_1^\rH\to\psi_{\taut,1}M_1^\rH(-1),
\]
where $\rN_\taut$ denotes the nilpotent part of the monodromy operator for its eigenvalue one on the nearby cycles of $M^\rH$ at the origin. Indeed, considering the diagram
\[
\xymatrix{
\phi_{\taut,1}M^\rH_1\ar[d]_\var\ar[r]&\phi_{\taut,1}(M^\rH_1[*0])\ar[d]^\var_\wr\\
\psi_{\taut,1}M^\rH_1(-1)\ar[r]^-\sim&\psi_{\taut,1}(M^\rH_1[*0])(-1),
}
\]
where the horizontal arrows are functorially obtained from $M^\rH_1\to M^\rH_1[*0]$, the mixed Hodge structure $\cH^1\Hm i_*\,\Hm i^! M_1^\rH$ is identified with the cokernel of the upper horizontal arrow, and hence of the left vertical one. Since $\can\colon\psi_{\taut,1}M^\rH_1\to\phi_{\taut,1}M^\rH_1$ is an epimorphism and $\rN_\taut=\var\circ\can$, the conclusion follows.
\end{example}

\begin{example}\label{exem:locsequences}
Given a reduced divisor $D\subset U$, define $\pQQ_D^\rH=\Hm a_D^*\QQ^\rH_{\Spec\CC}[\dim D]$, where $a_D$ denotes the structure morphism. Then there is an isomorphism
\[
\Hm i^*\,\pQQ_U^\rH=\cH^0\Hm i^*\,\pQQ_U^\rH\simeq \pQQ_D^\rH
\]
and an exact sequence
\[
0\to\Hm i_*\pQQ_D^\rH\to\pQQ_U^\rH[!D]\to\pQQ_U^\rH\to0.
\]
If, moreover, $D$ is smooth, there is also an isomorphism
\[
\Hm i^!\,\pQQ_U^\rH[1]=\cH^1\Hm i^!\,\pQQ_U^\rH\simeq \pQQ_D^\rH(-1),
\]
and the above exact sequence can be completed to a diagram
\begin{equation}\label{eqn:exactsequenceExampleA3}
\begin{aligned}
\xymatrix{
0\ar[r]&\pQQ_U^\rH\ar[r]&\pQQ_U^\rH[*D]\ar[r]&\Hm i_*\pQQ_D^\rH(-1)\ar[r]&0,\\
0&\ar[l]\pQQ_U^\rH\ar@{=}[u]\ar@{}[ur]|\circlearrowright&\ar[l]\pQQ_U^\rH[!D]\ar[u]&\ar[l]\Hm i_*\pQQ_D^\rH&\ar[l]0.
}
\end{aligned}
\end{equation} in which the lower row is the dual exact sequence and the square commutes.
\end{example}

\subsection{A review of exponential mixed Hodge structures}
\label{subsec:FT_EMHS}

Let $\Afu_\ts$ be the affine line with coordinate $\ts$. In \cite[\S4]{K-S10}, Kontsevich and Soibelman define the category $\EMHS$ of \emph{exponential mixed Hodge structures} as the full subcategory of $\MHM(\Afu_\ts)$ consisting of those objects~$N^\rH$ whose underlying perverse sheaf has vanishing global cohomology. The assignment
\begin{displaymath}
N^\rH \mto \Pi_\ts(N^\rH)=N^\rH \star \Hm j_! \cO_{\Gm}^\rH,
\end{displaymath}
where $\star$ stands for additive $\ast$-convolution defined as
\begin{equation}\label{eqn:additiveconvolutionMHM}
N_1^\rH\star N_2^\rH=\Hm s_*(N_1^\rH\boxtimes N_2^\rH)
\end{equation} and $j \colon \Gm \hookrightarrow \Afu$ for the inclusion, yields an exact functor~$\Pi_\ts\colon\MHM(\Afu_\ts)\to\MHM(\Afu_\ts)$, which is a projector onto $\EMHS$ that is left adjoint to its inclusion as a subcategory. In particular, there is a natural morphism $N^\rH\to\Pi_\ts(N^\rH)$ in~$\MHM(\Afu_\ts)$. Its kernel and cokernel are constant mixed Hodge modules on $\Afu_\ts$. More precisely, the~$\cD_{\Afu_\ts}$\nobreakdash-module underlying the kernel is the maximal constant submodule of~$N$. We will also write $\Pi$ for~$\Pi_\theta$ when the coordinate is clear.

Each object of $\EMHS$ is endowed with a weight filtration, defined from that in $\MHM(\Afu_\ts)$ by the formula $W^{\scriptscriptstyle\mathrm{EMHS}}_n\Pi(N^\rH)=\Pi(W_n N^\rH)$ for any mixed Hodge module $N^\rH$ on~$\Afu_\ts$.

\begin{remark}\label{rem:WEMHS} This weight filtration has the following properties:
\begin{itemize}
\item
if $N=\Pi(N)$ (and hence $N^\rH=\Pi(N^\rH)$) there is a functorial morphism
\[
W_nN^\rH\to W^{\scriptscriptstyle\mathrm{EMHS}}_nN^\rH;
\]

\item
if $N^\rH$ is mixed of weights $\leq n$ (\resp $\geq n$) in $\MHM(\Afu_\ts)$, then $\Pi(N^\rH)$ is mixed of weights $\leq n$ (\resp $\geq n$) in $\EMHS$; therefore, if $N^\rH$ is pure of weight $n$, then $\Pi(N^\rH)$ (which in general is mixed of weights $\geq n$ as an object of $\MHM(\Afu_\ts)$) is a pure object of $\EMHS$ of weight $n$.

\item
For the sake of simplicity, when there is no risk of confusion, we write $W_\bbullet \Pi(N^\rH)$ instead of $W_\bbullet^{\scriptscriptstyle\mathrm{EMHS}}\Pi(N^\rH)$ for the weight filtration of an exponential mixed Hodge structure~$\Pi(N^\rH)$.
\end{itemize}
\end{remark}

The category $\EMHS$ is endowed with a tensor structure induced by the additive convolution~$\star$ on $\MHM(\Afu)$ such that the equality
\[
\Pi(N_1^\rH)\star\Pi(N_2^\rH)=\Pi(N_1^\rH\star N_2^\rH)
\]
holds. Moreover, the weight filtration is strictly compatible with the tensor product (\cf \hbox{\cite[Prop.\,4.3]{K-S10}}). Note that $N_1^\rH\star N_2^\rH$ could be a complex, but its cohomologies in non-zero degrees are constant (this is checked on the underlying $\cD$-modules), and hence are annihilated by $\Pi$; on the other hand, the cohomology of $\Pi(N_1^\rH)\star\Pi(N_2^\rH)$ is concentrated in degree zero.

Recall that the functor
\begin{equation}\label{eqn:exactfunctor}
\Mod_{\hol\reg}(\cD_{\Afu_\ts})\to\mathsf{Vect}_\CC, \quad N\mto \coH^1_{\dR}(\Afu_\ts,N\otimes \fqq{\ts})
\end{equation}
is exact and that
$\coH^j_{\dR}(\Afu_\ts,N\otimes \fqq{\ts})$ vanishes for $j\neq1$. Moreover, the morphism $N\to\Pi(N)$ induces an isomorphism
\[
\coH^1_{\dR}(\Afu_\ts,N\otimes \fqq{\ts})\to\coH^1_{\dR}(\Afu_\ts,\Pi(N)\otimes \fqq{\ts}).
\]

Let $N^\rH$ be a mixed Hodge module on~$\Afu_\ts$, and let $\Pi(N^\rH)$ be the associated object in $\EMHS$. Let $a_{\Afu_\ts}$ denote the structure morphism of $\Afu_\ts$. The \emph{de~Rham fiber functor} on $\EMHS$ is given~by
\begin{equation}\label{eq:deRhamfibre}
\Pi(N^\rH)\mto\coH^1_{\dR}(\Afu_\ts,\Pi(N)\otimes \fqq{\ts})= \coH^1_{\dR}(\Afu_\ts,N\otimes \fqq{\ts})
=\coH^0a_{\Afu_\ts,\bast}(N\otimes \fqq{\ts}).
\end{equation}
An object of $\EMHS$ is zero if and only if its de~Rham fiber is zero. It follows from the exactness of \eqref{eqn:exactfunctor} that the natural morphism
\begin{equation}\label{eqn:imageweightfiltration}
\coH^1_{\dR}(\Afu_\ts, W_\bbullet N \otimes \fqq{\ts})\to \coH^1_{\dR}(\Afu_\ts,N\otimes \fqq{\ts})
\end{equation} is injective. The filtration $W_\bbullet\coH^1_{\dR}(\Afu_\ts, N\otimes \fqq{\ts})$ is defined as its image. Then any morphism $N_1^\rH\to N_2^\rH$ induces a strictly filtered morphism
\[
(\coH^1_{\dR}(\Afu_\ts,N_1\otimes \fqq{\ts}),W_\bbullet)\to
(\coH^1_{\dR}(\Afu_\ts,N_2\otimes \fqq{\ts}),W_\bbullet).
\]

On the other hand, letting $j_\infty\colon\Afu_\ts\hto\PP^1_\ts$ denote the inclusion, the (decreasing) \emph{irregular Hodge filtration} on $j_{\infty\bast}(N\otimes \fqq{\ts})$ is defined in~\cite[\S6.b]{Bibi08} out of the Hodge filtration $F^\bbullet N$ of~$N^\rH$. It is indexed by $\QQ$ and denoted by $F_\mathrm{Del}^\bbullet$ in~\loccit, whereas here we denote it by $\irrF^\bbullet$, to emphasize the compatibility with other possible definitions \cite{E-S-Y13, S-Y14}.

\begin{prop}\label{prop:bifilt}
The natural morphism
\[
\coH^1_{\dR}(\PP^1_\ts,\irrF^\bbullet j_{\infty\bast}(N\otimes \fqq{\ts}))\to
\coH^1_{\dR}(\PP^1_\ts,j_{\infty\bast}(N\otimes \fqq{\ts}))=\coH^1_{\dR}(\Afu_\ts,N\otimes \fqq{\ts})
\]
is injective and, defining $\irrF^\bbullet\coH^1_{\dR}(\Afu_\ts,N\otimes \fqq{\ts})$ as its image,
the functor from $\MHM(\Afu_\ts)$ to bifiltered vector spaces\enlargethispage{\baselineskip}%
\begin{equation}\label{eqn:factorsthrough}
N^\rH\mto \bigl(\coH^1_{\dR}(\Afu_\ts,N\otimes \fqq{\ts}),\irrF^\bbullet,W_\bbullet)
\end{equation}
factors through $\Pi$.
Any morphism in $\MHM(\Afu_\ts)$ (or in $\EMHS$) gives rise by means of \eqref{eqn:factorsthrough} to a strictly bifiltered morphism.
\end{prop}

\begin{proof}
Injectivity is proved in \cite[Th.\,6.1]{Bibi08} for polarized Hodge modules. The case of mixed Hodge modules is deduced from it in \hbox{\cite[Th.\,3.3.1]{E-S-Y13}} (in~a more general setting). Since the kernel and the cokernel of~$N^\rH\to\Pi(N^\rH)$ are constant mixed Hodge modules and $\coH^j_{\dR}(\Afu_\ts,N\otimes \fqq{\ts})$ vanishes in all degrees $j$ for constant $N$, it is clear that \eqref{eqn:factorsthrough} factors through~$\Pi$. The last statement is obtained by successively applying Theorems 0.2(2),~0.3(4), and~0.3(2) of~\cite{Bibi15}.
\end{proof}

\subsection{Mixed Hodge structures as exponential mixed Hodge structures}
\label{subsec:MHS_EMHS}

Let $\MHS$ be the category of mixed Hodge structures, endowed with its natural tensor product. We consider the open immersion $j_0\colon \Gm\hto\Afu_\ts$ and the closed immersion $i_0\colon \{0\}\hto\Afu_\ts$. Letting $\Hm i_{0!}$ denote the pushforward functor $\MHS\to\MHM(\Afu_\ts)$, there is an isomorphism $\phi_{\ts,1}\circ\Hm i_{0!}\simeq\id_{\MHS}$. Let us now consider the composed functor $\Pi\circ\Hm i_{0!}:\MHS\to\EMHS$. Then there is an isomorphism $\phi_{\ts,1}\circ(\Pi\circ\Hm i_{0!})\simeq\id_{\MHS}$,
since $\phi_{\ts,1}\circ\Pi\simeq \phi_{\ts,1}$
(because $\phi_{\ts,1}$ of a constant object in $\MHM(\Afu_\ts)$ is zero). From standard properties of mixed Hodge modules one checks the following lemma.

\begin{lemma}\label{lem:weightsMHS}
The functor $\Pi\circ\Hm i_{0!}\colon\MHS\to\EMHS$ is compatible with tensor products and makes $\MHS$ into a full tensor subcategory of $\EMHS$. Moreover, if $V^\rH$ is a mixed Hodge structure
with weight filtration $W_\bbullet V^\rH$ and associated exponential mixed Hodge structure $\Pi_\theta(\Hm i_{0!}V^\rH)$, then the equality $W_\bbullet\Pi_\theta(\Hm i_{0!}V^\rH)=\Pi_\theta(\Hm i_{0!}W_\bbullet V^\rH)$ holds (\cf Remark~\ref{rem:WEMHS} for the notation).
\end{lemma}

\begin{prop}\label{prop:EMHSMHS}
Let $N^\rH$ be an object of $\MHM(\Afu_\ts)$ such that $\Pi(N^\rH)$ belongs to $\MHS$. Then the bifiltered vector space $\bigl(\coH^1_{\dR}(\Afu_\ts,N\otimes \fqq{\ts}),\irrF^\bbullet,W_\bbullet)$ is naturally isomorphic to that associated with the mixed Hodge structure $\phi_{\ts,1}N^\rH$.
\end{prop}

\begin{proof}
Let $V^\rH$ be a mixed Hodge structure. The vector space $\bigl(\coH^1_{\dR}(\Afu_\ts, i_{0\bast}V\otimes \fqq{\ts}),\irrF^\bbullet,W_\bbullet)$ endowed with its two filtrations is easily identified with $(V,F^\bbullet,W_\bbullet)$. Taking the isomorphism
\[
\bigl(\coH^1_{\dR}(\Afu_\ts,\Pi(i_{0\bast}V)\otimes \fqq{\ts}),\irrF^\bbullet,W_\bbullet)
\simeq
\bigl(\coH^1_{\dR}(\Afu_\ts, i_{0\bast}V\otimes \fqq{\ts}),\irrF^\bbullet,W_\bbullet)
\]
into account, we deduce an isomorphism
\begin{equation}\label{eq:EMHSMHS}
\bigl(\coH^1_{\dR}(\Afu_\ts,\Pi(i_{0\bast}V)\otimes \fqq{\ts}),\irrF^\bbullet,W_\bbullet) \simeq(V,F^\bbullet,W_\bbullet).
\end{equation}
Now, if $\Pi(N^\rH)$ belongs to $\MHS$, then $\Pi(N^\rH)=\Pi(\Hm i_{0!}V^\rH)$ for some mixed Hodge structure~$V^\rH$, which satisfies $V^\rH\simeq\phi_{\ts,1}\Pi(N^\rH)\simeq\phi_{\ts,1}N^\rH$. Then \eqref{eq:EMHSMHS} gives
\begin{align*}
(\phi_{\ts,1}N,F^\bbullet,W_\bbullet)&\simeq\bigl(\coH^1_{\dR}(\Afu_\ts,\Pi(i_{0\bast}\phi_{\ts,1}N)\otimes \fqq{\ts}),\irrF^\bbullet,W_\bbullet)\\
&\simeq\bigl(\coH^1_{\dR}(\Afu_\ts,\Pi(N)\otimes \fqq{\ts}),\irrF^\bbullet,W_\bbullet)\\
&\simeq\bigl(\coH^1_{\dR}(\Afu_\ts,N\otimes \fqq{\ts}),\irrF^\bbullet,W_\bbullet),
\end{align*} as we wanted to show.
\end{proof}

\subsection{The exponential mixed Hodge structures associated with a function}
\label{subsec:EMHS_Uf}
Let~$U$ be a smooth complex quasi-projective variety of dimension~$d$ and $f\colon U\to\Afu_\ts$ a regular function. For each object $N_U^\rH$ of $\MHM(U)$ and each integer $r$, we define the mixed Hodge modules
\[
(N^\rH)^r_*=\cH^{r-d}\Hm f_*N_U^\rH,\quad
(N^\rH)^r_!=\cH^{r-d}\Hm f_!N_U^\rH,\quad (N^\rH)^r_\rmid=\image[(N^\rH)^r_!\to(N^\rH)^r_*]
\]
on $\Afu_\ts$, and we denote by $N^r_*,N^r_!,N^r_\rmid$ the respective underlying $\cD_{\Afu_\ts}$-modules. Upon application of the projector $\Pi_\ts$, they define objects of $\EMHS$:
\begin{gather}
\coH^r(U,N^\rH,f)=\Pi_\ts((N^\rH)^r_*),\quad\coH_\rc^r(U,N^\rH,f)=\Pi_\ts((N^\rH)^r_!),\\\coH^r_\rmid(U,N^\rH,f)=\Pi_\ts((N^\rH)^r_\rmid)=\image[\coH_\rc^r(U,N^\rH,f)\to\coH^r(U,N^\rH,f)].
\end{gather}

\begin{example}\label{exem:f=0}
Let $a_U$ denote the structure morphism of $U$. For $f$ the zero function, the above are the mixed Hodge structures
\[
\coH^r(U,N^\rH,0)=\coH^{r-d}(\Hm a_{U*}N^\rH), \quad \coH_\rc^r(U,N^\rH,0)=\coH^{r-d}_\rc(\Hm a_{U*}N^\rH).
\]
\end{example}

There are filtered isomorphisms between the filtered de~Rham fibers
\begin{align}\label{eq:EMHSMHS1}
(\coH^r_\dR(U,N_U\otimes \fqq{f}),\irrF^\bbullet) &\simeq
(\coH^1_\dR(\Afu_\ts,N^r_*\otimes \fqq{\ts}),\irrF^\bbullet),\\
\label{eq:EMHSMHS2}
(\coH^r_{\dR,\rc}(U,N_U\otimes \fqq{f}),\irrF^\bbullet) &\simeq
(\coH^1_\dR(\Afu_\ts,N^r_!\otimes \fqq{\ts}),\irrF^\bbullet).
\end{align}
This follows from \cite[Th.\,1.3(4)]{S-Y14} applied to a compactification of the morphism $f$ and by taking the pushforward by the structure morphism. For the second isomorphism, we use the equality $\coH^1_{\dR,\rc}(\Afu_\ts,N^r_!\otimes \fqq{\ts})= \coH^1_\dR(\Afu_\ts,N^r_!\otimes \fqq{\ts})$, which follows from the fact that $N^r_!\otimes \fqq{\ts}$ has a purely irregular singularity at $\infty$.

The natural morphism $(\coH^r_{\dR,\rc}(U,N_U\otimes \fqq{f}),\irrF^\bbullet)\to(\coH^r_\dR(U,N_U\otimes \fqq{f}),\irrF^\bbullet)$ is strict and its image $\coH^r_{\dR,\rmid}(U,N_U\otimes \fqq{f})$ is endowed with the quotient (equivalently sub) filtration $\irrF^\bbullet$.

On the other hand, the right-hand sides of~\eqref{eq:EMHSMHS1} and \eqref{eq:EMHSMHS2} underlie bifiltered de~Rham fibers $(\coH^1_\dR(\Afu_\ts,N^r_*\otimes \fqq{\ts}),\irrF^\bbullet,W_\bbullet)$ and \hbox{$(\coH^1_{\dR,\rc}(\Afu_\ts,N^r_!\otimes \fqq{\ts}),\irrF^\bbullet,W_\bbullet)$}, the weight filtration~$W_\bbullet$ being defined as the image of \eqref{eqn:imageweightfiltration}. Thanks to the last statement of Proposition \ref{prop:bifilt}, the bifiltered de~Rham fiber $(\coH^1_{\dR,\rmid}(\Afu_\ts,N^r_!\otimes \fqq{\ts}),\irrF^\bbullet,W_\bbullet)$ is unambiguously defined.

\begin{defi}[{\cite[\S5]{K-S10}}] Associated with a function $f\colon U\to\Afu_\ts$ and an integer $r$ as above are the exponential mixed Hodge structures
\[
\coH^r(U,f)=\Pi_\ts((\pQQ^\rH_U)^r_*), \quad \coH_\rc^r(U,f)=\Pi_\ts((\pQQ^\rH_U)^r_!), \quad \coH_\rmid^r(U,f)=\Pi_\ts((\pQQ^\rH_U)^r_\rmid),
\]
with corresponding bifiltered de~Rham fibers
\[
(\coH^r_\dR(U,\fqq{f}),\irrF^\bbullet,W_\bbullet), \quad (\coH^r_{\dR,\rc}(U,\fqq{f}),\irrF^\bbullet,W_\bbullet), \quad (\coH^r_{\dR,\rmid}(U,\fqq{f}),\irrF^\bbullet,W_\bbullet).
\]
\end{defi}

\begin{prop}\label{prop:weightsHdUf}
The exponential mixed Hodge structure $\coH_\rc^d(U,f)$ is mixed of weights $\leq d$, $\coH^d(U,f)$ is mixed of weights $\geq d$, and $\coH_\rmid^d(U,f)$ is pure of weight $d$. Moreover, the following properties are equivalent:
\begin{enumerate}
\item\label{prop:weightsHdUf1}
the natural morphism $\gr_d^W\coH_\rc^d(U,f)\to\gr_d^W\coH^d(U,f)$ is an isomorphism,
\item\label{prop:weightsHdUf2} the equality
$\coH_\rmid^d(U,f)=W_d\coH^d(U,f)$ holds.
\end{enumerate}
\end{prop}

\begin{proof}
It follows from the standard behaviour of weights with respect to pushforward functors (here applied to mixed Hodge modules, \cf \cite[(4.5.2)]{MSaito87}) that the object $\cH^0\Hm f_!\pQQ_U^\rH$ of~$\MHM(\Afu_\ts)$ has weights~$\leq d$, and hence that the object $\coH_\rc^d(U,f)$ of $\EMHS$ has weights~$\leq d$ by Remark \ref{rem:WEMHS}. The argument for $\coH^d(U,f)$ and $\coH_\rmid^d(U,f)$ is the same.

For the last assertion, let us denote by $\cH^0\Hm f_{!*}\pQQ_U^\rH$ the image in $\MHM(\Afu_\ts)$ of the natural morphism $\cH^0\Hm f_{!}\pQQ_U^\rH\to\cH^0\Hm f_{*}\pQQ_U^\rH$. Then the exact sequence
\[
0\to\cH^0\Hm f_{!*}\pQQ_U^\rH\to\cH^0\Hm f_{*}\pQQ_U^\rH\to N^\rH\to0
\]
in $\MHM(\Afu_\ts)$, which defines the object $N^\rH$, dualizes (with Tate twist $-d$) to
\[
0\to N_1^\rH\to\cH^0\Hm f_{!}\pQQ_U^\rH\to\cH^0\Hm f_{!*}\pQQ_U^\rH\to0,
\]
with $N_1^\rH=\Hm\bD N^\rH(-d)$ and the composition of the above morphisms
\[
\cH^0\Hm f_{!}\pQQ_U^\rH\to\cH^0\Hm f_{!*}\pQQ_U^\rH\to\cH^0\Hm f_{*}\pQQ_U^\rH
\]
is the natural morphism. By Remark~\ref{rem:WEMHS}, applying $\Pi$ and $\gr_d^W$ gives exact sequences
\begin{align*}
0\to\coH_\rmid^d(U,f)\to\gr_d^W\coH^d(U,f)\to \gr_d^W \Pi(N^\rH)\to0, \\
0\to\gr_d^W \Pi(N_1^\rH)\to\gr_d^W\coH_\rc^d(U,f)\to\coH_\rmid^d(U,f)\to0.
\end{align*}
Property \eqref{prop:weightsHdUf2} is thus equivalent to the vanishing $\gr_d^W\Pi(N^\rH)=0$, which means that $\gr_d^W\!N^\rH$ is constant. By duality, this is equivalent to $\gr_d^W\!N_1^\rH$ being constant, and hence to the vanishing $\gr_d^W\Pi(N_1^\rH)=0$, which is precisely Property~\eqref{prop:weightsHdUf1}.
\end{proof}

\begin{example}\label{exem:locexactsequence}
Let $D$ be a divisor on which the function $f$ vanishes. By applying $\Hm f_!$ and the exact functor $\Pi_\ts$ to the lower exact sequence in \eqref{eqn:exactsequenceExampleA3}, we obtain a long exact sequence
\begin{equation}\label{eqn:localizationexactsequence}
\cdots\to\coH^{r-1}_\rc(D)\to\coH^r_\rc(U\moins D,f)\to\coH^r_\rc(U,f)\to\coH^r_\rc(D)\to\cdots
\end{equation} on noting the equality $\coH^j_\rc(D, 0)=\coH^{j}_\rc(D)$ from Example \ref{exem:f=0}.
If, moreover, $D$ is smooth, by also applying $\Hm f_*$ to the upper exact sequence in \eqref{eqn:exactsequenceExampleA3}, we obtain a diagram
\[
\xymatrix{
\cdots\ar[r]&\coH^{r-2}(D)(-1)\ar[r]&\coH^r(U,f)\ar[r]&\coH^r(U\moins D,f)\ar[r]&\coH^{r-1}(D)(-1)\ar[r]&\cdots\\
\cdots&\coH^r_\rc(D)\ar[l]&\coH^r_\rc(U,f)\ar[u]\ar@{}[ur]|\circlearrowright\ar[l]&\coH^r_\rc(U\moins D,f)\ar[u]\ar[l]&\ar[l]\coH^{r-1}_\rc(D)&\ar[l]\cdots
}
\]
in the category $\EMHS$, where the rows are long exact sequences and the square commutes.

\end{example}

\begin{example}\label{exem:Gmx}\mbox{}
In this example, we construct isomorphisms
\[
\coH^k(\Gm^k,x_1+\cdots+x_k)\simeq\QQ(-k) \quad \text{and} \quad \coH^k_\rc(\Gm^k,x_1+\cdots+x_k)\simeq\QQ(0)
\]
for each integer $k \geq 1$. For this, we first observe the vanishing $\coH^r(\Afu_x,x)=\coH^r_{\cp}(\Afu_x,x)=0$ for all $r$ since the pushforward of $\pQQ^\rH_{\Afu_x}$ by the identity map is the constant Hodge module concentrated in degree zero, and hence is killed by the projector $\Pi$. The long exact sequences from Example \ref{exem:locexactsequence} then yield
\[
\coH^1(\Gm,x)\simeq\coH^0(\{0\})(-1)=\QQ(-1) \quad \text{and} \quad \coH^1_\rc(\Gm,x)\simeq\coH^0_\rc(\{0\})=\QQ(0).
\]

For $k \geq 2$, let $f\colon U=\Gm^k\to\Afu$ be the sum map $x_1+\cdots+ x_k$. Then $\Hm f_*\pQQ_U^\rH$ is the $k$-fold convolution product of $\pQQ^\rH_{\Afu}[*0]$ with itself. After applying $\Pi$ to the cohomology modules, we obtain the vanishing $\coH^r(U,f)=0$ (in other words, $\cH^{r-k}\Hm f_*\pQQ_U^\rH$ is constant) for $r\neq k$ and an isomorphism $\coH^k(U,f)\simeq\coH^1(\Gm,x)^{\otimes k}$ in $\EMHS$. Since $\MHS$ is a full tensor subcategory of~$\EMHS$ by Lemma \ref{lem:weightsMHS} and $\coH^1(\Gm,x)=\QQ(-1)$ lies in $\MHS$ by the above computation, so does $\coH^k(U,f)$ and there is an isomorphism of mixed Hodge structures
\begin{equation}\label{eqn:Gmsum}
\coH^k(\Gm^k,x_1+\cdots+x_k)\simeq\QQ(-k).
\end{equation}

Finally, with the above assumptions, the isomorphism $\cH^j\Hm f_!\pQQ_U^\rH\simeq(\Hm\bD\cH^{-j}\Hm f_*\pQQ_U^\rH)(-k)$ implies the vanishing $\coH^r_\rc(U,f)=0$ for all $r\neq k$. From \eqref{eqn:Gmsum} we know that the successive quotients of the weight filtration on $\cH^0\Hm f_*\pQQ_U^\rH$ as an object of $\MHM(\Afu)$ are all constant except for one that is isomorphic to $\Hm i_{0!}\QQ(-k)$. Dually, the successive quotients of the weight filtration on $\cH^j\Hm f_!\pQQ_U^\rH$ as an object of $\MHM(\Afu)$ are all constant except for one that is isomorphic to $\Hm i_{0!}\QQ(0)$. There is thus an isomorphism $\coH^k_\rc(\Gm^k,x_1+\cdots+x_k)\simeq\QQ(0)$.
\end{example}

\subsection{A criterion for an object of \texorpdfstring{$\EMHS$}{EMHS} to belong to \texorpdfstring{$\MHS$}{MHS}}\label{subsec:critMHS}

We now give a criterion ensuring that, for certain $f\colon U\to\Afu$ and $N^\rH$ as above, the objects $\coH^r(U,N^\rH,f)$ and $\coH^r_\rc(U,N^\rH,f)$ of~$\EMHS$ belong to the subcategory $\MHS$, that is, are usual mixed Hodge structures. This will allow us to apply Proposition \ref{prop:EMHSMHS} to identify their Hodge and irregular Hodge filtrations.

\begin{thm}[{\cf also \cite[Th.\,3.3]{Yu12}, \cite[Lem.\,6.5.3]{F-J18}}]\label{th:EMHSMHStg}
Assume that $U$ and $f$ are of the form~$U=\Afu_\tt\times V$ and $f=tg$ for some smooth quasi-projective variety $V$ and some regular function $g\colon V\to\Afu$, and let $M_V^\rH$ be an object of $\MHM(V)$.
\begin{enumerate}
\item\label{th:EMHSMHStg1}
If $N_U^\rH=\pQQ_{\Afu_\tt}^\rH\boxtimes M_V^\rH$, then the exponential mixed Hodge structures $\coH^r(U,N^\rH,f)$ and $\coH^r_\rc(U,N^\rH,f)$ belong to $\MHS$ for all $r$, and their bifiltered de~Rham fibers $(\coH^r_\dR(U,N_U\otimes \fqq{f}),\irrF^\bbullet,W_\bbullet)$ and $(\coH^r_{\dR,\rc}(U,N_U\otimes \fqq{f}),\irrF^\bbullet,W_\bbullet)$ underlie the corresponding mixed Hodge structures.

\item\label{th:EMHSlocMHS2}
If $N_U^\rH=\pQQ_{\Afu_\tt}^\rH[*0]\boxtimes M_V^\rH$, then the exponential mixed Hodge structure $\coH^r(U,N^\rH,f)$ belongs to $\MHS$ for all $r$ and its bifiltered de~Rham fiber $(\coH^r_\dR(U,N_U\otimes \fqq{f}),\irrF^\bbullet,W_\bbullet)$ underlies the corresponding mixed Hodge structure.

\item\label{th:EMHSlocMHS3}
If $N_U^\rH=\pQQ_{\Afu_\tt}^\rH[!0]\boxtimes M_V^\rH$, then the exponential mixed Hodge structure $\coH^r_\rc(U,N^\rH,f)$ belongs to $\MHS$ for all $r$ and its bifiltered de~Rham fiber $(\coH^r_{\dR,\rc}(U,N_U\otimes \fqq{f}),\irrF^\bbullet,W_\bbullet)$ underlies the corresponding mixed Hodge structure.
\end{enumerate}
\end{thm}

The last statements in \eqref{th:EMHSMHStg1}--\eqref{th:EMHSlocMHS3} follow from Proposition \ref{prop:EMHSMHS}. We are thus reduced to proving the first statements.

\begin{proof}[Proof of \eqref{th:EMHSMHStg1}]
We start with $\coH^r(U,N^\rH,f)$. Consider the divisor $D=\Afu_\tt\times g^{-1}(0)$ on $U$.
The object \hbox{$\cN_{U*}^\rH=[N_U^\rH\to N_U^\rH[*D]]$}
of $\catD^\rb(\MHM(U))$ is supported on the zero locus of $f$. For each~$r$, there exists a mixed Hodge structure $V^\rH$ such that $\cH^{r-d}\Hm f_*\cN_{U*}^\rH)=\Hm i_{0!}V^\rH$, and hence the object $\Pi_\ts(\cH^{r-d}\Hm f_*\cN_{U*}^\rH)$ of $\EMHS$ belongs to $\MHS$ for all $r$. Thanks to Proposition~\ref{prop:EMHSMHS}, it suffices to prove that the de Rham fibers $\coH^r_{\dR}(U,N_U(*D)\otimes \fqq{f})$ of $\coH^r(U, N_U^\rH[*D], f)=\Pi_\theta(N_U^\rH[*D])_*^r$ vanish in all degrees $r$. By considering the pushforward by the map $(\tt,g)\colon\Afu_\tt\times V\to\Afu_\tt\times\Afu_\taut$ we can reduce the proof to the case~$V=\Afu_\taut$ and $g=\taut$.
We then simply write $M=M_{\Afu_\taut}$,
and we are reduced to proving
\begin{equation}\label{eq:vanishlocalization}
a_{\Afu_\tt\times\Afu_\taut,\bast}\bigl((\cO_{\Afu_\tt}\boxtimes M(*0))\otimes \fqq{\tt\taut}\bigr)=0.
\end{equation}
Let $p_\tt\colon\Afu_\tt\times\Afu_\taut\to\Afu_\tt$ denote the projection. The equality $a_{\Afu_\tt\times\Afu_\taut}=a_{\Afu_\tt}\circ p_\tt$ holds. We note that the complex $p_{\tt\bast}\bigl((\cO_{\Afu_\tt}\boxtimes M(*0))\otimes \fqq{\tt\taut}\bigr)$ is nothing but the Fourier transform $\FT_\taut(M(*0))$, and in~particular~is concentrated in degree zero. Then, identifying a $\cO_{\Afu_\tt}$-module with connection with a $\CC[\tt]$\nobreakdash-module with connection, $a_{\Afu_\tt,\bast}\FT_\taut(M(*0))$ is represented by the complex
\[
\left[ \FT_\taut(M(*0))\To{\partial_\tt}\underset{\cbbullet}{\FT_\taut(M(*0))} \right]
\simeq \left[ M(*0)\To{\taut} \underset{\cbbullet}{M(*0)} \right],
\]
where $\cbbullet$ indicates the term in degree zero. Since $\tau$ acts invertibly on $M(*0)$, the left-hand side is thus quasi-isomorphic to zero.

For $\coH^r_\rc(U,N^\rH,f)$, we argue similarly, considering instead the object $\cN_{U!}^\rH= [N_U^\rH[!D]\to N_U^\rH ]$
and noting that $\Pi_\ts(\cH^{r-d}\Hm f_!\cN_{U!}^\rH)$ belongs to $\MHS$ for all $r$. It is then enough to prove the vanishing $\coH^r_{\dR,\rc}(U,N_U(!D)\otimes \fqq{f})=0$ in all degrees $r$, which reduces to
\begin{equation}\label{eq:vanishduallocalization}
a_{\Afu_\tt\times\Afu_\taut,\bexc}\bigl((\cO_{\Afu_\tt}\boxtimes M(!0))\otimes \fqq{\tt\taut}\bigr)=0
\end{equation}
by taking proper pushforward by $(\tt,g)$. It is known that the complex \hbox{$p_{\tt\bexc}\bigl((\cO_{\Afu_\tt}\boxtimes M(!0))\otimes \fqq{\tt\taut}\bigr)$} is also isomorphic to $\FT_\taut(M(!0))$, and in particular is concentrated in degree zero (see \eg \hbox{\cite[App.\,2, Prop.\,1.7]{Malgrange91}}). From the isomorphism $\FT_\taut(M(!0))\simeq\iota^\bast\bD\FT_\taut((\bD M)(*0))$, where $\iota$ is the involution $\tt\mto-\tt$, we thus get the vanishing
\[
a_{\Afu_\taut,\bexc}\iota^\bast\bD\FT_\taut((\bD M)(*0))\simeq\bD a_{\Afu_\taut,\bast}\FT_\taut((\bD M)(*0))\simeq0
\]
by the first part of the proof applied to $\bD M$.
\end{proof}

\begin{proof}[Proof of \eqref{th:EMHSlocMHS2}]
As in \eqref{th:EMHSMHStg1}, we reduce to the case where $V=\Afu_\taut$ and $g=\taut$, so that $f=\tt\taut$, and we simply denote the pushforward $\Hm g_*M_V^\rH$ by $M^\rH\in\catD^\rb(\MHM(\Afu_\taut))$. We extend the functor~$\Pi_\taut$ to an endofunctor of $\catD^\rb(\MHM(\Afu_\taut))$ that commutes with taking cohomology, and we consider the morphisms
\[
\cO_{\Afu_\tt}^\rH[*0]\boxtimes M^\rH\to\cO_{\Afu_\tt}^\rH[*0]\boxtimes\Pi_\taut(M^\rH)\from\cO_{\Afu_\tt}^\rH\boxtimes\Pi_\taut(M^\rH)
\]
in $\catD^\rb(\MHM(\Afu_\tt\times\Afu_\taut))$ and their pushforwards
\[
\Hm f_*\bigl(\cO_{\Afu_\tt}^\rH[*0]\boxtimes M^\rH\bigr)\to \Hm f_*\bigl(\cO_{\Afu_\tt}^\rH[*0]\boxtimes\Pi_\taut(M^\rH)\bigr) \from \Hm f_*\bigl(\cO_{\Afu_\tt}^\rH\boxtimes\Pi_\taut(M^\rH)\bigr)
\]
in $\catD^\rb(\MHM(\Afu_\ts))$. We will prove that, after applying the projector $\Pi_\ts$, they induce cohomology isomorphisms in $\EMHS$. Since the projections of the cohomologies of the rightmost term belong to $\MHS$ according to \eqref{th:EMHSMHStg1} (note that \eqref{eq:vanishlocalization} holds for a complex $M$ if it holds for its cohomology modules), then so will the projections of the cohomologies of the leftmost term.

For the left morphism, we note that the cohomologies of the simple complex associated with the double complex $M^\rH\to\Pi_\tau M^\rH$ are constant mixed Hodge modules. Indeed, it is enough to check that the underlying $\cD$-modules are constant (\cf\cite[Th.\,4.20]{S-Z85}), and this follows from the long exact sequence in cohomology, upon noting that an extension of constant $\cD_{\Afu_\taut}$\nobreakdash-modules is constant. We are thus reduced to proving that, for any constant mixed Hodge module~$M^\rH$ on~$\Afu_\taut$ and any $j$, the mixed Hodge module $\cH^j\Hm f_*\bigl(\cO_{\Afu_\tt}^\rH[*0]\boxtimes M^\rH\bigr)$ on~${\Afu_\ts}$ is constant. Again, it is enough to prove that the underlying $\cD$-module is constant, which amounts to proving that the de~Rham fiber \eqref{eq:deRhamfibre} of its projection to $\EMHS$ is zero. This fiber is isomorphic~to
\[
\cH^{j} a_{\Afu_\tt\times\Afu_\taut,\bast}\bigl((\cO_{\Afu_\tt}(*0)\boxtimes M)\otimes\fqq{\tt\taut}\bigr).
\]
By first projecting to $\Afu_\tt$, we find
\[
p_{\tt\bast}\bigl((\cO_{\Afu_\tt}(*0)\boxtimes M)\otimes\fqq{\tt\taut}\bigr)\simeq\cO_{\Afu_\tt}(*0)\otimes\FT_\taut(M)=0,
\]
because the Fourier transform of a constant $\cD$-module is supported at $0$.

For the right morphism, a similar argument reduces the proof to showing that the cohomology of the double complex that it defines has constant cohomology or, equivalently, that the cohomology of the double complex
\begin{align*}
\bigl\{a_{\Afu_\ts}\bigl[f_\bast\bigl(\cO_{\Afu_\tt}\boxtimes\Pi_\taut(M)\bigr)\otimes E^\ts\bigr]&\to a_{\Afu_\ts}\bigl[f_\bast\bigl(\cO_{\Afu_\tt}(*0)\boxtimes\Pi_\taut(M)\bigr)\otimes E^\ts\bigr]\bigr\}\\
\simeq \bigl\{a_{\Afu_\tt\times\Afu_\taut,\bast}\bigl((\cO_{\Afu_\tt}\boxtimes\Pi_\taut(M))\otimes\fqq{\tt\taut}\bigr)&\to a_{\Afu_\tt\times\Afu_\taut,\bast}\bigl((\cO_{\Afu_\tt}(*0)\boxtimes\Pi_\taut(M))\otimes\fqq{\tt\taut}\bigr)\bigr\}\\
\simeq \bigl\{a_{\Afu_\tt,\bast}\FT_\taut(\Pi_\taut(M))&\to a_{\Afu_\tt,\bast}\bigl(\cO_{\Afu_\tt}(*0)\otimes\FT_\taut(\Pi_\taut(M))\bigr)\bigr\}
\end{align*}
is zero. This follows from the equality $\FT_\taut(\Pi_\taut(M))=\cO_{\Afu_\tt}(*0)\otimes\FT_\taut(\Pi_\taut(M))$, which is readily checked from the definition of $\Pi_\taut$ (see \cite[Prop.\,12.3.5]{Katz90}).
\end{proof}

\begin{proof}[Proof of \eqref{th:EMHSlocMHS3}]
The dual mixed Hodge module $\Hm\bD(N^\rH_U)$ is of the form considered in \eqref{th:EMHSlocMHS2}, and there is an isomorphism $\Hm f_!N^\rH_U\simeq \Hm\bD\Hm f_*\,\Hm\bD(N^\rH_U)$ in $\catD^\rb(\MHM(\Afu_\ts))$. Therefore, each cohomology (in $\MHM(\Afu_\ts)$) of $\Hm f_!N^\rH_U$ is dual to some cohomology of $\Hm f_*\,\Hm\bD(N^\rH_U)$. We can then conclude by using the fact that the projection $\Pi_\theta(M^\rH)$ of a mixed Hodge module $M^\rH\in\MHM(\Afu_\ts)$ belongs to $\MHS$ if and only if $\Pi_\theta(\Hm\bD M^\rH)$ does. Indeed, the former property is equivalent to~$\FT_\ts(M)(*0)$ being a constant flat bundle with connection and, letting $\iota$ denote the involution $\theta\mto-\theta$, there is an isomorphism $(\FT_\ts(\bD M))(*0)\simeq\FT_\ts(\iota^\bast M)(*0)^\vee$.
\end{proof}

\begin{example}\label{exem:cK} Let us apply Theorem \ref{th:EMHSMHStg} to $M_V^\rH=\pQQ_V^\rH$. Setting $\cK=g^{-1}(0)$, the divisor $D$ in its proof is given by $D=\Afu_t\times\cK$. The vanishing
\[
\coH^r_\rc(\Afu_t\times(V\moins\cK),tg)=\coH^r(\Afu_t\times(V\moins\cK),tg)=0
\]
holds for all $r$ by \eqref{eq:vanishlocalization} and \eqref{eq:vanishduallocalization}. Then, according to the exact sequence \eqref{eqn:localizationexactsequence}, the mixed Hodge structure \hbox{$\coH^r_\rc(\Afu_t\times V,tg)$} provided by Theorem \ref{th:EMHSMHStg}\eqref{th:EMHSMHStg1} is isomorphic to $\coH^r_\rc(\Afu_t\times\cK)$. Furthermore, by the K\"unneth formula, there is an isomorphism
\[
\coH_\cp^r(\Afu\times \KM)
= \coH_\cp^2(\Afu) \otimes \coH_\cp^{r-2}(\KM)
= \coH_\cp^{r-2}(\KM)(-1),
\]
so that, finally, we obtain an isomorphism of mixed Hodge structures
\[
\coH^r_\rc(\Afu_t\times V,tg)\simeq \coH_\cp^{r-2}(\KM)(-1).
\]
On the other hand, let $i_{\KM}$ and $j_{\KM}$ be the complementary closed and open immersion attached to the divisor $\Afu\times\KM$ on $\Afu\times V$. Applying $\Hm(tg)_*$ to the triangle
\[
\Hm i_{\KM,*}\,\Hm i_{\KM}^!\pQQ^\rH_{\Afu\times V}\to\pQQ^\rH_{\Afu\times V}\to\Hm j_{\KM,*}\,\Hm j_{\KM}^*\pQQ^\rH_{\Afu\times V}\To{+1}
\]
in $\catD^\rb(\MHM(\Afu\times V))$ (\cf\cite[(4.4.1)]{MSaito87})
and noting that $(tg)_*\circ i_{\KM,*}$ is the zero map, the vanishing of $\coH^r(\Afu_t\times(V\moins\cK),tg)$ for all $r$ yields an isomorphism of mixed Hodge structures
\[
\coH^r(\Afu_t\times V,tg)\simeq\coH^r_{\Afu_t\times\KM}(\Afu_t\times V)\simeq\coH^r_{\KM}(V).
\]
If $\KM$ is smooth, the rightmost term is also isomorphic to $\coH^{r-2}(\cK)(-1)$.
\end{example}

\subsection{Computation of the Hodge and the weight filtrations}\label{subsec:computeweights}

Let $M^\rH$ be a mixed Hodge module on the affine line $\Afu_\taut$ and $N^\rH=\pQQ_{\Afu_\tt}^\rH\boxtimes M^\rH$. According to Theorem \ref{th:EMHSMHStg}\eqref{th:EMHSMHStg1}, the exponential mixed Hodge structures
\[
\coH^r(\Afu_\tt\times\Afu_\taut,N^\rH,t\taut)\quad\text{and}\quad \coH^r_\rc(\Afu_\tt\times\Afu_\taut,N^\rH,t\taut)
\]
are usual mixed Hodge structures. As already noticed in the proof of Theorem \ref{th:EMHSMHStg}, writing $a_{\Afu_\tt\times\Afu_\taut}=a_{\Afu_\tt}\circ p_\tt$ we get the following result:

\begin{lemma}
The de~Rham fibers of $\coH^{j+1}(\Afu_\tt\times\Afu_\taut,N^\rH,t\taut)$ and $\coH^{j+1}_\rc(\Afu_\tt\times\Afu_\taut,N^\rH,t\taut)$ are, respectively, $\coH_\dR^j(\Afu_\tt,\FT M)$ for $j=0,1$ and $\coH_{\dR,\rc}^j(\Afu_\tt,\FT M)$ for $j=1,2$, and zero otherwise.
\end{lemma}

\begin{nota}\label{nota:FTMH}
We denote by $\coH^j(\Afu_t,\FT M^\rH)$ and $\coH^j_\rc(\Afu_t,\FT M^\rH)$ the mixed Hodge structure $\coH^{j+1}(\Afu_\tt\times\Afu_\taut,N^\rH,t\taut)$ and $\coH^{j+1}_\rc(\Afu_\tt\times\Afu_\taut,N^\rH,t\taut)$ respectively. Their associated bifiltered de~Rham fibers are $(\coH^j_\dR(\Afu_\tt,\FT M),\irrF^\cbbullet,W_\bbullet)$ and $(\coH^j_{\dR,\rc}(\Afu_\tt,\FT M),\irrF^\cbbullet,W_\bbullet)$.
\end{nota}

\emph{A priori}, there might be a source of ambiguity in the notation for the irregular Hodge filtration, since $\FT M$ also underlies an irregular mixed Hodge module on~$\PP^1_t$ in the sense of~\cite{Bibi15}, by means of which $\coH_\dR^j(\Afu_\tt,\FT M)$ acquires an irregular Hodge filtration. However, due to the known $E_1$\nobreakdash-degeneration results for the irregular Hodge filtration, both filtrations on~$\coH_\dR^j(\Afu_\tt,\FT M)$ agree. We shall not use this property.

\begin{thm}\label{th:MHSgeneral}
Let $i_0\colon \{0\} \to \Afu_\tt$ be the inclusion. For each mixed Hodge module~$M^\rH$ on the affine line, the mixed Hodge structures
$\coH^j(\Afu_t,\FT M^\rH)$ and $\coH^j(\Hm i_0^!M^\rH)$ are isomorphic.
\end{thm}

\begin{proof}
Let us first consider the case where $M^\rH$ is supported at $0$, \ie $M^\rH$ is isomorphic to $\Hm i_{0,*}V^\rH$ for some mixed Hodge structure $V^\rH$. Set $N^\rH=\pQQ_{\Afu_\tt}^\rH\boxtimes M^\rH$ and let $f\colon \Afu_\tt \times \Afu_\taut \to \Afu_\ts$ be the function $(\tt,\taut)\mto \tt\taut=\ts$. Then $N^\rH$ is supported on $f=0$ and $\Hm f_*N^\rH[-1]\simeq\cH^{-1}\Hm f_*N^\rH$ is supported at $\ts=0$ and is isomorphic to $\Hm i_{0,*}V^\rH$ (here $i_0$ denotes the inclusion of $0$ in $\Afu_\ts$). Therefore, $\coH^1(\Afu_\tt\times\Afu_\taut,N^\rH,f)$ is isomorphic to $V^\rH$ and all other cohomologies vanish.

Now, for a general object $M^\rH$ of $\MHM(\Afu_\tau)$, we consider the exact sequence \eqref{eq:exactseqloc1} for the divisor $D=\{0\}$. According to \eqref{eq:vanishlocalization}, the vanishing
\[
\coH^{j}_{\dR}(\Afu_\tt\times\Afu_\taut,(\cO_{\Afu_\tt}\boxtimes M(*0))\otimes E^{\tt\taut})=0
\]
holds for all $j$. Setting $\coH^j(\Hm i_0^!M^\rH)=V_j^\rH$, for $j=0,1$, and $N^\rH_j=\pQQ_{\Afu_\tt}^\rH\boxtimes\Hm i_{0,*}\,V_j^\rH$, we thus get an isomorphism of mixed Hodge structures
\[
\coH^{j+1}(\Afu_\tt\times\Afu_\taut,\pQQ^\rH_{\Afu_\tt}\boxtimes M^\rH,\tt\taut)
\simeq \coH^1(\Afu_\tt\times\Afu_\taut,N_j^\rH,\tt\taut)
\]
and the right-hand side underlies $V_j^\rH$ by the first part of the proof.
\end{proof}

\begin{cor}\label{cor:weightssimple}
Assume that $M^\rH$ is a pure object of $\MHM(\Afu_\taut)$ of weight $w$ whose underlying $\cD_{\Afu_\taut}$\nobreakdash-module $M$ has no non-zero section supported at the origin (in particular, $M$ is an intermediate extension at the origin). Then $\coH^{j+1}_\dR(\Afu_t,\FT_\tau M)=0$ for $j\neq0$ and
\begin{enumerate}
\item\label{cor:weightssimple3}
$\coH^1(\Afu_t,\FT_\tau M^\rH)$ is isomorphic to the mixed Hodge structure
\[
\coker[\rN\colon \psi_{\taut,1}M^\rH\to\psi_{\taut,1}M^\rH(-1)];
\]

\item\label{cor:weightssimple2}
if $0$ is not a singular point of $M$, then
$\coH^1(\Afu_\tt,\FT_\taut M^\rH)$ is a pure Hodge structure of weight $w+1$ and the equality
$\dim\gr^p_{\irrF}\coH^1_\dR(\Afu_\tt,\FT_\taut M)=\rk\gr^{p-1}_FM$ holds;
\end{enumerate}
If, moreover, $M$ has no non-zero constant submodule, there is an exact sequence
\[
0\to M^\rH\to\Pi_\taut(M^\rH)\to M^{\prime\rH}\to0
\]
in $\MHM(\Afu)$, where $M'{}^\rH$ is a constant mixed Hodge module on $\Afu_\taut$ of weights $\geq w+1$ with
\begin{enumerate}\setcounter{enumi}{2}
\item\label{cor:weightssimple1}
$\dim\gr^p_{\irrF}\gr_\ell^W \coH^1_\dR(\Afu_\tt,\FT_\taut M')=\rk\gr^{p-1}_F\gr_{\ell-1}^WM'$, for all $\ell,p\in\ZZ$.
\end{enumerate}
\end{cor}

\begin{proof} The assumptions on $M$ imply that the semisimple $\cD_{\Afu_\tt}$\nobreakdash-module $\FT_\tau M$ has no constant submodules, and hence its de~Rham cohomology in degree zero vanishes.
Statement \eqref{cor:weightssimple3} follows from Theorem \ref{th:MHSgeneral} and Example \ref{exem:MHSgeneral}. If $0$ is not a singular point of $M$, then $\psi_{\taut,1}M^\rH=\psi_{\taut}M^\rH$ and the operator $\rN$ is identically zero, so \eqref{cor:weightssimple3} identifies $\coH^1(\Afu_\tt,\FT_\taut M^\rH)$ with the mixed Hodge structure $\psi_{\taut}M^\rH(-1)$, which is pure of weight $w+1$ and satisfies
\[
\dim\gr^p_{\irrF}\coH^1_\dR(\Afu_\tt,\FT_\taut M)=\dim\gr^p_F\psi_{\taut}M_1^\rH(-1)=\rk\gr^{p-1}_FM
\]
by the formulas recalled at the beginning of Example \ref{exem:MHSgeneral}. This proves
\eqref{cor:weightssimple2}. The statement about weights in the last point follows from the same argument we used in the proof of Proposition \ref{prop:HwtM}: were the inclusion $M^\rH\subset W_w\Pi_\taut(M^\rH)$ not an equality, $\Pi_\taut(M)$ would have a non-zero constant submodule, which contradicts the vanishing of its cohomology. Finally, \eqref{cor:weightssimple1} is obtained by applying Example \ref{exem:MHSgeneral} once again.
\end{proof}

\backmatter
\bibliographystyle{amsplain}
\bibliography{kloosterman-revised10}
\end{document}